 \newtheorem{theorem}{Theorem}[section]
 \newtheorem{Def}[theorem]{Definition}
 \newtheorem{Prop}[theorem]{Proposition}
 \newtheorem{Lem}[theorem]{Lemma}
 \newtheorem{Cor}[theorem]{Corollary}
 \newtheorem{Exa}[theorem]{Example}
 \numberwithin{equation}{section}
\begin{document}

\title{Lipschitz equivalence  of self-similar sets and hyperbolic boundaries}

\author{Jun Jason Luo} \address{Department of Mathematics, The Chinese University of Hong Kong,
Hong Kong, and Department of Mathematics, Shantou University,  Shantou, Guangdong 515063, P.R. China} \email{ luojun2011@@yahoo.com.cn}

\author{Ka-Sing Lau} \address{Department of Mathematics, The Chinese
University of Hong Kong, Hong Kong} \email{ kslau@@math.cuhk.edu.hk}

\thanks{The research is supported by the HKRGC grant,  the Focus Investment Scheme of CUHK, and  the NNSF of China (no. 10871065)}

\keywords{Augmented tree,  hyperbolic boundary, incidence matrix, Lipschitz equivalence, OSC, primitive, rearrangeable, self-similar set, self-affine set.}

\begin{abstract}
 In \cite{Ka03} Kaimanovich introduced the concept of {\it   augmented tree}  on the symbolic space of a self-similar set. It is hyperbolic in the sense of Gromov, and it was shown in \cite {LaWa09} that  under the open set condition, a self-similar set can be identified with the hyperbolic boundary of the tree. In
the paper, we investigate in detail a class of  {\it simple}
augmented trees and the Lipschitz equivalence of such trees. The
main purpose is to use this to study the Lipschitz equivalence
problem of the totally disconnected  self-similar sets which has
been undergoing some extensive development recently.
\end{abstract}

\maketitle

\bigskip

\begin{section}{\bf Introduction}

Two compact metric spaces $(X,d_X)$ and $(Y,d_Y)$ are said to be
{\it Lipschitz equivalent}, and denote by $X\simeq Y$, if there is a
bi-Lipschitz map  $\sigma$ from $X$ onto $Y$, i.e., $\sigma$ is a
bijection and  there is a constant $C>0$ such that
$$
C^{-1}d_X(x,y)\leq d_Y(\sigma(x),\sigma(y))\leq Cd_X(x,y)\quad  \quad \forall  \ x,y\in X.
$$
It is easy to see that if $X \simeq Y$, then $\dim_HX=\dim_HY =s $, where
$\dim_H$ denotes the Hausdorff dimension. A more intensive study of this was due to Cooper and Pignartaro  \cite{CoPi88} in the late 80's, they showed that for certain Cantor sets $X, Y$ on ${\Bbb R}$, the Lipschitz equivalence implies that there exists a bi-Lipschitz map $\sigma$ and  a $\lambda >0$ such that ${\mathcal H}^s (\sigma (E)) = \lambda{\mathcal H}^s (E)$. In another consideration,  Falconer and Marsh
\cite{FaMa89} proved that for quasi-self-similar circles, they are Lipschitz equivalent if and only if they have the same Hausdorff
dimension.

The recent interest of the Lipschtiz equivalence is due to the path
breaking study of Rao, Ruan and Xi \cite{RaRuXi06} on a question of
David and Semmes \cite{DaSe97} on certain special self-similar set
on ${\Bbb R}$ to be dust-like (see Example \ref {example1}). They
observed a graph directed relationship in the underlying iterated function system (IFS), and made use of this to construct the needed
bi-Lipschitz map. There is a number of generalizations on ${\Bbb R}$
and ${\Bbb R}^d$ for the totally disconnected self-similar sets
with uniform contraction ratio or with logarithmically commensurable contraction ratios (\cite{DeHe10}, \cite{RuWaXi12},
\cite{XiRu07}, \cite{XiXi10}, \cite{XiXi11}). For certain Cantor-type sets on ${\Bbb R}^d$  with non-equal contraction ratios, Rao,
Ruan and Wang \cite{RaRuWa10} had an elegant algebraic criterion for
them to be Lipschitz equivalent,  which improved a  condition of
Falconer and Marsh in \cite{FaMa92}. Other considerations can be
found in \cite{LlMa10}, \cite{MaSa09}, \cite{Xi04}.

So far the investigation of the Lipschitz equivalence is very much restricted on a few  special self-similar sets. In this paper, we will provide a broader framework to study the  problem through the concept of {\it augmented (rooted) tree}.
 For an IFS $\{S_i\}_{i=1}^m$ of contractive similitudes on ${\Bbb R}^d$ (assume equal contraction ratio in the present situation) and the associated self-similar set $K$,  we use $X=\bigcup_{n=0}^{\infty}\Sigma^n$, $\Sigma = \{1,\dots , m\}$ to denote the symbolic space. Then $X$ has a natural graph structure, and we denote the edge set by ${\mathcal E}_v$ ($v$ for vertical). We define a horizontal edge for a pair $({\bf u}, {\bf v})$  in $X\times X$ if ${\bf u}, {\bf v} \in \Sigma^n$ and $S_{\bf u}(K) \cap S_{\bf v}(K) \not = \emptyset$, and denote this set of edges by ${\mathcal E}_h$. The {\it augmented tree} is defined as the graph $(X, {\mathcal E})$ where $ {\mathcal E} = {\mathcal E}_v \cup {\mathcal E}_h$.

  Such augmented tree was first introduced by Kaimanovich \cite{Ka03} on the Sierpinski gasket in order to incorporate the intersection of the cells to the  symbolic space, and was developed by Lau and Wang \cite{LaWa09} to general self-similar sets. It was proved that if an IFS satisfies the open set condition (OSC), then the augmented tree is {\it hyperbolic} in the sense of Gromov. There is a hyperbolic metric $\rho$ on $X$, which induces a {\it hyperbolic boundary}  $(\partial X, \rho)$. The hyperbolic boundary is shown to be homeomorphic to  $K$;  moreover under certain mild condition, the homeomorphism is  actually a H\"older equivalent map. This setup is used to study the random walks on $(X, {\mathcal E})$ and their Martin boundaries \cite{LaWa11}.

Based on this, our approach to the Lipschitz equivalence of the
self-similar sets is to lift the consideration to the augmented
trees $(X, {\mathcal E})$. We define a {\it horizontal connected
component} of  $X$ to be the maximal connected horizontal subgraph
$T$ in some level $\Sigma^n$. Let ${\mathscr C}$ be the set of all
horizontal connected components of $X$.   For $T\in {\mathscr C}$,
we use $T\Sigma$ to denote the set of offsprings of $T$, and
consider $T\cup T\Sigma$ as a subgraph in $X$.  We say that $T,
T'\in {\mathscr C}$ are equivalent if they are graph isomorphic. We
call $X$ {\it simple} if there are finitely many equivalence
classes. Under this condition,  we can define an incidence matrix
$$
A=[a_{ij}]
$$
for the equivalence classes as follows:  choose any component $T$
belonging to the class ${\mathscr T}_i$, and let
$Z_{i1},\dots,Z_{i\ell}$ be the connected components of the descendants
of $T$. The entry $a_{ij}$ denotes the number of $Z_{ik}$ that
belonging to the class ${\mathscr T}_j$.

It is shown that a simple augmented tree $X$ is always
hyperbolic, and the relationship of the hyperbolic boundary and the self-similar set is analogous to the case with OSC (Propositions \ref{th3.4}, \ref{th3.8}). Our basic theorem, to put it into a simple statement,  is (assuming the IFS has equal contraction ratio):

\begin{theorem}\label{th1}
Suppose the augmented tree $(X,{\mathcal E})$ is simple and the
corresponding incidence matrix  $A$ is primitive (i.e., $A^n>0$ for some $n>0$), then $\partial (X,{\mathcal E}) \simeq\partial (X,
{\mathcal E}_v)$.
\end{theorem}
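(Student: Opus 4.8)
The plan is to realize both boundaries as end spaces of genuine rooted trees carrying a visual metric of the form (base)$^{-(\text{confluence depth})}$, and then to produce a bijection between the two end spaces that distorts confluence by only a bounded additive amount; such a map is automatically bi-Lipschitz. First I would record, from Propositions~\ref{th3.4} and \ref{th3.8} and using the same hyperbolic parameter $a>1$ on both graphs, the two visual metrics: on $\partial(X,\mathcal{E}_v)$ one has $\rho_v(\xi,\eta)\asymp a^{-|\xi\wedge\eta|}$, where $|\xi\wedge\eta|$ is the length of the longest common prefix, so $\partial(X,\mathcal{E}_v)$ is the Cantor dust with strong separation and branching $m$; on $\partial(X,\mathcal{E})$ one has $\rho(\xi,\eta)\asymp a^{-\gamma(\xi,\eta)}$, where $\gamma(\xi,\eta)$ is the largest $n$ for which the level-$n$ prefixes $\xi|_n$ and $\eta|_n$ lie in a single horizontal component. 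Since $X$ is simple, its horizontal components have uniformly bounded size, so collapsing each component to a point is a rough isometry; the quotient is a rooted tree $\mathcal{T}$ (the \emph{component tree}) whose depth-$n$ vertices are the level-$n$ components, whose offspring relation is the one encoded by $A$ (so depth in $\mathcal{T}$ equals level in $X$), and whose end space is $(\partial(X,\mathcal{E}),\rho)$ with $\gamma$ now being confluence depth in $\mathcal{T}$. Thus it suffices to show the end space of $\mathcal{T}$ is Lipschitz equivalent to that of the full $m$-ary tree $\mathcal{T}_m=(X,\mathcal{E}_v)$, and for this it suffices to construct a bijection $\sigma$ between the two sets of ends with $\big|\gamma_{\mathcal{T}}(\sigma\xi,\sigma\eta)-|\xi\wedge\eta|\big|\le L$ for a fixed constant $L$.

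The construction is driven by the spectral data of $A$. I would first check that the Perron--Frobenius eigenvalue of the primitive matrix $A$ equals $m$: the level-$n$ words number exactly $m^n$ and split into components of bounded size, so the number $c_n$ of depth-$n$ vertices of $\mathcal{T}$ satisfies $c_n\asymp m^n$, which forces the spectral radius to be $m$. Primitivity then yields, via $(A^{n})_{ij}\sim m^{\,n}u_iv_j$ with $u,v$ the Perron eigenvectors, that the type-distribution of depth-$n$ vertices stabilizes and that $c_n/m^n$ is bounded above and below uniformly in $n$; in particular both $\mathcal{T}$ and $\mathcal{T}_m$ have Ahlfors $s$-regular boundaries with $s=\log m/\log a$. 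Next I fix $N_0$ with $A^{N_0}>0$ and view both trees at block scale $N_0$: a super-node of $\mathcal{T}_m$ has $M:=m^{N_0}$ children, all of one type, while a type-$i$ super-node of $\mathcal{T}$ has $r_i=\sum_j(A^{N_0})_{ij}\asymp M$ children, realizing every type since $A^{N_0}>0$. Because the child-counts are comparable within a fixed factor and every type occurs, I can match the descendant sets of already-paired super-nodes injectively, carrying any bounded surplus or deficit forward as a bounded change of depth; recursing produces $\sigma$, with $\gamma$ preserved up to an additive constant depending only on $N_0$ and the comparability factor.

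The main obstacle is to keep the depth-lag of the matching bounded along every branch, uniformly over the entire tree: the child-counts of $\mathcal{T}$ are only comparable to $M$, not equal, so a naive greedy pairing risks accumulating a lag that tends to infinity and destroys the lower bi-Lipschitz bound. This is exactly where primitivity is indispensable rather than mere irreducibility: it yields a two-sided estimate $c_1 m^n\le c_n\le c_2 m^n$ and a stabilizing type-distribution with no periodic gcd obstructing growth, so that over each block of length $N_0$ the two trees present descendant populations whose ratio is bounded by a constant independent of the block and of the vertex. Consequently the surplus absorbed at each block is bounded, the accumulated lag telescopes to a global bound $L$, and $\sigma$ distorts confluence by at most $L$. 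Feeding this back through the visual metrics gives $a^{-L}\rho_v\le \rho\circ(\sigma\times\sigma)\le a^{L}\rho_v$ up to the implicit multiplicative constants, that is $\partial(X,\mathcal{E})\simeq\partial(X,\mathcal{E}_v)$, which is the assertion of Theorem~\ref{th1}.
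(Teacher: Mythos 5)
Your reduction is sound as far as it goes: collapsing horizontal components does give a component tree whose confluence depth agrees with the Gromov product up to an additive constant (this is the content of Theorem \ref{th2.3}(i) plus simplicity), and producing a bijection of ends that distorts confluence by a bounded additive amount would indeed finish by the argument of Proposition \ref{th3.3}. The genuine gap is in your third paragraph, where the entire difficulty is concentrated and where you assert rather than prove the key estimate. Comparability of populations, $c_1 m^n \le c_n \le c_2 m^n$, together with a stabilizing type distribution, does \emph{not} yield a matching with bounded depth-lag: the per-node surplus $M-r_i$ at block scale $N_0$ is a fixed nonzero constant for the deficient types, so at depth $nN_0$ the total unmatched mass is of order $c_{nN_0}$, a constant \emph{fraction} of the level, not a bounded quantity; ``carrying it forward'' re-injects a fresh surplus at every block, and nothing in your argument shows the lag along an individual branch stays bounded rather than drifting to infinity (which would destroy the lower Lipschitz bound). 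Note also that the growth bounds you attribute to primitivity are trivially true for \emph{any} simple augmented tree, since $\sum_j n_j b_j = m^n$ with $b_j \le \ell$ forces $m^n/\ell \le c_n \le m^n$; so your stated role for primitivity cannot be where it actually enters. The paper's Section 6 example with incidence matrix $\left[\begin{smallmatrix}1 & 1\\ 0 & 3\end{smallmatrix}\right]$ (reducible, but with exactly this kind of comparable growth) shows that comparable growth alone leaves the equivalence unresolved; some exact mechanism is required.

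That mechanism is what the paper supplies and what your proposal is missing: an \emph{exact}, not approximate, rearrangement. By Proposition \ref{th3.5} one has $A\mathbf{b}=m\mathbf{b}$ with $b_1=1$ (the root class is a singleton), and primitivity is used not for growth rates but to guarantee that every row of $A^k$ has its first entry at least $\ell^2$, i.e., every type's $k$-step descendants contain an abundance of \emph{singleton} components. These singletons are the ``small change'' that makes the combinatorial Lemma \ref{th4.5} (Xi--Xiong) applicable: one can partition the descendant components of each component into groups whose vertex-weights sum to \emph{exactly} $m^k$ (Lemma \ref{th4.6}, Proposition \ref{th4.7}); this is the $(m^k,\mathbf{b})$-rearrangeable property. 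With exact groups, the paper builds a level-preserving bijection of \emph{vertices} (not components) sending each group to the $m^k$ children of a single vertex, so the lag is zero at every step and the map is a near-isometry with distortion constant $k+2$ (proof of Theorem \ref{th3.6}). To repair your argument you would have to prove your telescoping claim, and the natural way to do so is precisely to establish exact local conservation of the weighted mass $\mathbf{b}$ at every scale --- which is the rearrangeability statement; without it, your construction is a plausible program, not a proof.
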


 We call a self-similar set $K$ {\it dust-like}  if it satisfies $S_i(K) \cap S_j(K)  = \emptyset $ for $i \not = j$. By reducing the Lipschitz equivalence on the trees in Theorem \ref{th1} to the self-similar sets (Proposition \ref{th3.8}, Theorem \ref {th3.9}), we have

\begin{theorem} \label {th2} (i) If in addition to the condition in Theorem \ref{th1}, the IFS satisfies some mild condition (H) (see Section 2), then $K$ is Lipschitz equivalent to a dust-like self-similar set.

 \medskip
 (ii) If $K$ and $K'$ are as in (i) and the two IFS's  have the same number of similitudes and the same contraction ratio, then they are Lipschitz equivalent.
 \end{theorem}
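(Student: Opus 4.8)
The plan is to prove Theorem \ref{th2} by reducing it to the tree-level statement in Theorem \ref{th1}, using the dictionary between augmented trees and self-similar sets supplied by Propositions \ref{th3.4}, \ref{th3.8} and Theorem \ref{th3.9}. For part (i), the key observation is that the vertical tree $(X,\mathcal{E}_v)$ is, by construction, the symbolic tree of an IFS with \emph{no} horizontal connections, so its hyperbolic boundary is exactly a dust-like self-similar set $K'$. First I would invoke Theorem \ref{th1} to obtain a bi-Lipschitz bijection between $\partial(X,\mathcal{E})$ and $\partial(X,\mathcal{E}_v)$ in their respective hyperbolic metrics $\rho$. Then the mild condition (H) is precisely what is needed (via Proposition \ref{th3.8}) to guarantee that the canonical homeomorphism between the hyperbolic boundary $(\partial X,\rho)$ and the self-similar set $K$ is not merely Hölder but bi-Lipschitz; the same applies on the dust-like side, where the correspondence is automatically bi-Lipschitz because there are no overlaps. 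Composing the three maps

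\begin{equation}
K \;\longleftrightarrow\; \partial(X,\mathcal{E}) \;\longleftrightarrow\; \partial(X,\mathcal{E}_v) \;\longleftrightarrow\; K'
\end{equation}

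then yields the desired bi-Lipschitz equivalence $K \simeq K'$.

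For part (ii), I would argue that the dust-like model constructed in (i) depends only on the number of maps and the contraction ratio. A dust-like self-similar set with $m$ similitudes of common ratio $r$ is, up to bi-Lipschitz equivalence, independent of the particular placement of the pieces: its symbolic space is the full shift on $m$ symbols, and the natural metric it inherits is comparable to the ultrametric $\rho(\mathbf{u},\mathbf{v}) = r^{|\mathbf{u}\wedge\mathbf{v}|}$ where $\mathbf{u}\wedge\mathbf{v}$ is the longest common prefix. Hence any two dust-like sets $K_1', K_2'$ with the same $m$ and the same $r$ are bi-Lipschitz equivalent via the identity on symbolic sequences. Given $K$ and $K'$ as in (i) with matching parameters, I would produce dust-like models $K_1'$ and $K_2'$ for each, observe $K \simeq K_1'$ and $K' \simeq K_2'$ from part (i), and then use $K_1' \simeq K_2'$ from the shift-space comparison to chain the equivalences together.

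The main obstacle will be the control of the bi-Lipschitz constants when passing between the hyperbolic metric $\rho$ on $\partial X$ and the Euclidean metric on $K$. The homeomorphism from Proposition \ref{th3.8} is guaranteed only to be Hölder in general, and it is condition (H) that must be shown to upgrade it to a bi-Lipschitz map; verifying that this upgrade holds \emph{simultaneously} on both the overlapping tree and its dust-like vertical counterpart, with comparable constants, is the delicate point. The crux is that condition (H) forces the diameters of the cells $S_\mathbf{u}(K)$ to be uniformly comparable to $r^{|\mathbf{u}|}$ (the bounded distortion / separation estimate), which aligns the Gromov product with the logarithm of the Euclidean distance up to an additive constant; once this comparability is in hand, the Hölder exponent becomes exactly the correct one and the equivalence is bi-Lipschitz. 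I would therefore spend most of the effort pinning down how (H) enters the estimate $\rho(\mathbf{u},\mathbf{v}) \asymp \mathrm{diam}\,(S_{\mathbf{u}\wedge\mathbf{v}}(K))$ and checking that the simplicity and primitivity hypotheses keep these estimates uniform across the finitely many equivalence classes of connected components.
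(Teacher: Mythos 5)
Your overall architecture coincides with the paper's own proof (Theorem \ref{th3.9}): chain $K \leftrightarrow \partial(X,\mathcal{E}) \leftrightarrow \partial(X,\mathcal{E}_v) = \partial(Y,\mathcal{E}_v) \leftrightarrow \partial(Y,\mathcal{E}) \leftrightarrow K'$, with the middle link supplied by Theorem \ref{th1} and the outer links by Proposition \ref{th3.8}; your detour in part (ii) through two dust-like models compared in the shift space is only cosmetically different from the paper's direct identification of the two vertical trees, both being the full $m$-ary tree (the paper likewise disposes of the dust-like realization by regarding $(X,\mathcal{E}_v)$ as the tree of a strongly separated IFS). But there is one genuine misstep in how you plan to close the chain. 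Proposition \ref{th3.8} does not, and cannot, give a bi-Lipschitz identification of $(\partial X,\rho_a)$ with $K$: under condition (H) it gives exactly the H\"older (snowflake) equivalence \eqref{eq3.2}, $|\Phi(\xi)-\Phi(\eta)| \asymp \rho_a(\xi,\eta)^{\alpha}$ with $\alpha = -\log r/a$, and $\alpha=1$ would force $a=-\log r$, which is in general inadmissible since hyperbolicity constrains $a$ through $\exp(\delta a)-1<\sqrt{2}-1$. So your stated plan --- that (H) ``upgrades'' $\Phi$ to bi-Lipschitz, and that the dust-like side is ``automatically bi-Lipschitz because there are no overlaps'' --- fails if carried out literally: the same snowflake exponent appears on both sides of the chain, dust-like or not.

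The correct mechanism, which is what the paper actually uses, is cancellation rather than upgrading: because the two IFS's share the same ratio $r$ and the same parameter $a$ is used on both trees, $\Phi_1$ and $\Phi_2$ satisfy \eqref{eq3.2} with the \emph{same} exponent $\alpha$, so in $\tau=\Phi_2\circ\varphi\circ\Phi_1^{-1}$ the two snowflakings cancel and only the bi-Lipschitz constant of $\varphi$ (raised to the power $\alpha$) survives, giving $C'^{-1}|x-y|\le|\tau(x)-\tau(y)|\le C'|x-y|$. Your closing paragraph gestures toward this but misattributes the content of condition (H): the comparability $\mathrm{diam}\,K_{\mathbf u}\asymp r^{|\mathbf u|}$ is automatic for equal-ratio similitudes and needs no hypothesis; what (H) actually supplies (via Lemma \ref{th2.4}) is the separation lower bound $\mathrm{dist}(J_{\mathbf{u}\mathbf{i}},J_{\mathbf{v}\mathbf{j}})\ge c'r^{n}$ for offspring of disjoint cells, which produces the lower estimate in \eqref{eq3.2}, while the upper estimate comes from simplicity (uniform boundedness of horizontal components, Proposition \ref{th3.4}), not from (H). With these two corrections your argument becomes the paper's proof.
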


The proof of Theorem \ref{th1} depends on constructing a {\it
near-isometry} between the augmented tree $(X, {\mathcal E})$ and
$(X, {\mathcal E}_v )$.  The crux of the construction is to use a
technique of {\it rearrangement} of edges (Section 4), which is
based on an idea of Deng and He in \cite{DeHe10}. Actually  we prove
a less restrictive form of Theorem \ref{th1} (Theorem \ref{th3.6})
in terms of {\it rearrangeable matrices}. Theorem \ref{th1} follows
from another theorem (Theorem \ref{th3.6'}) that the primitive property implies rearrangeability .

 We  will provide an easy way to check an augmented tree being simple (Lemma \ref {th5.0}), which is more efficient to
apply to various examples than the graph directed systems that were used in the previous studies
 (\cite{DeHe10}, \cite{RaRuXi06}, \cite{XiXi10}).  Theorem \ref{th2} essentially covers all the known cases so far,
 it also covers  some new classes of IFS's that have overlaps and rotations (see Section 5).   Moreover the theory can be extended
 from the self-similar IFS to the  self-affine IFS: it is easy to see that the notion of augmented tree and the
results on such tree remain unchanged. For self-affine sets on
${\Bbb R}^d$, we can still establish the Lipschitz equivalence, making use
of a device in \cite {HeLa08} which replaces the Euclidean distance
by an ultra-metric adapted to the underlying self-affine system (see
Theorem \ref{th3.12}).

The paper is organized as follows. In Section 2, we recall some
well-known results on hyperbolic graphs and set up the augmented
tree. In Section 3, we introduce the notion of {\it simple}
augmented tree,  and derive its basic properties.  Theorem \ref{th1}
is stated there, and Theorem \ref{th2} together with other consequences is
proved. The proof of Theorem \ref{th1} and the involved concept of
rearrangement are given in Section 4.  In Section 5, we provide
several new examples to illustrate our results; some concluding remarks
and open questions are given in Section 6.
\end{section}

\bigskip

\begin{section}{\bf  Preliminaries}

Let $X$ be a countably infinite set, we say that $X$ is a {\it
graph} if it is associated with a symmetric subset  ${\mathcal E}$
of $(X \times X) \setminus \{(x,x) : \ x \in X\}$;  we call  $x \in
X$ a {\it vertex}, $(x, y) \in {\mathcal{E}}$ an {\it edge}, which
is more conveniently denoted by $x \sim y$ (intuitively, $x, y$ are
neighborhoods to each other).  By a {\it path} in $X$ from $x$ to
$y$, we mean a finite sequence $x = x_0, x_1, \dots , x_n=y$ such
that $x_i \sim x_{i+1}, i =0, \dots, n-1$. We always assume that the
graph $X$ is connected, i.e., there is a path joining any two
vertices $x, y \in X$. We call $X$ a {\it tree} if the path between
any two points is unique. We equip a graph $X$ with an
integer-valued metric $d(x,y)$, which is the minimum among the
lengths of the paths from $x$ to $y$; the corresponding geodesic
path is denoted by $\pi(x,y)$ and its length by $|\pi(x,y)|
(=d(x,y))$. Let $ o \in X$ be a fixed point in $X$ and call it the
{\it root} of the graph. We use $|x|$ to denote $d(o,x),$ and say
that $x$ belongs to the $n$-th level of the graph if $d(o,x)=n$.

 The notion of hyperbolic graph  was introduced by Gromov (\cite{Gr87}, \cite{Wo00}).  First we define the Gromov product
 of any two points $x,y
\in X$ by
$$
|x \wedge y| =\frac{1}{2}(|x|+|y|-d(x,y)).
$$
We say that $X$ is {\it hyperbolic} (with respect to $o$) if there
is $\delta >0$ such that
$$
 |x \wedge y| \geq \min\{|x \wedge z|,  |z\wedge y|\}-\delta \qquad \forall \ \  x,y,z \in X.
$$
Note that this is equivalent to a more geometric characterization:
there exists a $\delta'$ such that for any three points in $X$, the
geodesic triangle is $\delta'$-{\it thin}: any point on one side of
the triangle has distance less than $\delta'$ to some point on one of the
other two sides.

For a fixed $a>0$ with $a'=\exp(\delta a)-1<\sqrt{2}-1$, we define
an {\it ultra-metric} ${\rho_a}(\cdot,\cdot)$ on $X$ by
\begin{equation} \label{eq2.0}
{\rho_a}(x,y)=\left\{
\begin{array}{ll}
\exp(-a |x \wedge y| ) & \quad \textrm{if \ $x\ne y$},\\
0  & \quad \textrm{otherwise}.
\end{array} \right.
\end{equation}
Then
$$
{\rho_a}(x,y)\leq
(1+a')\max\{{\rho_a}(x,z),{\rho_a}(z,y)\} \qquad \forall \
x,y,z\in X,
$$
which is equivalent to  the path metric
$$
{\theta_a}(x,y)=\inf \Big\{ \sum_{i=1}^n{\rho_a}(x_{i-1},x_i): n
\geq 1,\ x = x_0,x_1,\dots,x_n=y, \ x_i\in X \Big\}.
$$
Since $\theta_a$ and $\rho_a$ determine the same topology as long as
$a'<\sqrt{2}-1$, we will use $\rho_a$ to replace $\theta_a$  for
simplicity.

\begin{Def}
The hyperbolic boundary of $X$ is defined as $\partial X=
\hat{X}\setminus X$ where $\hat{X}$ is the completion of $X$ under
$\rho_a$.
\end{Def}

The metric $\rho_a$ can be extended onto $\partial X$, and under which
$\partial X$ is a compact set.  It is often useful to identify $\xi
\in \partial X$ with a {\it geodesic ray} in $X$ that converge to
$\xi$, i.e., an infinite path $\pi[x_1,x_2,\dots]$ such that
$x_i\sim x_{i+1}$ and any finite segment of the path is a geodesic.
It is known that two geodesic rays $\pi[x_1,x_2,\dots],
\pi[y_1,y_2,\dots]$ represent the same $\xi \in \partial X$ if and
only if $ |x_n\wedge y_n|\rightarrow\infty$ as $n\rightarrow\infty.$

Our interest is on the following tree structure introduced by
Kaimanovich which is used to study the self-similar sets (\cite
{Ka03}, {\cite {LaWa09}}).  For a tree $X$ with a root $o$, we  use
${\mathcal E}_v$ to denote the set of edges ($v$ for vertical). We
introduce  additional edges on  each level $\{x: \ d(o, x) =n\}, \ n
\in {\Bbb N}$ as follows. Let $x^{-k}$ denote the $k$-th ancestor of
$x$, the unique point in the $(n-k)$-th level that is joined by a
unique path.

\begin{Def} \label{de2.2}
Let $X$ be a tree with a root $o$. Let \  ${\mathcal E}_h \subset (X
\times X) \setminus \{(x,x): \ x\in X\}$ such that it is symmetric
and satisfies:
$$
(x,y)\in {\mathcal E}_h \quad \Rightarrow \quad |x|=|y| \ \ \text{and}\ \
\text{either}~ x^{-1}=y^{-1} ~\text{or}~ (x^{-1},y^{-1})\in
{\mathcal E}_h.
$$
We call elements in  ${\mathcal E}_h$  horizontal edges,  and
for ${\mathcal E}={\mathcal E}_v\cup{\mathcal E}_h$, $(X,{\mathcal
E})$ is called an {augmented tree}.
\end{Def}

Following  {\cite {LaWa09}},  we say that a  path $\pi(x,y)$ is
a {\it horizontal geodesic} if it is a geodesic and it consists of
horizontal edges only. It is called a {\it canonical geodesic}  if
there exist $u,v\in \pi(x,y)$ such that:

(i)\  $\pi(x,y)=\pi(x,u)\cup\pi(u,v)\cup\pi(v,y)$ with $\pi(u,v)$ a
horizontal path and $\pi(x,u),\\ \pi(v,y)$ vertical paths;

(ii)\ for any geodesic path $\pi^{\prime}(x,y)$,
$\text{dist}(o,\pi(x,y))\leq \text{dist}(o,\pi^{\prime}(x,y))$.

\noindent Note that condition (ii) is to require the horizontal part
of the canonical geodesic to be on the highest level.  The following
basic theorem was proved in \cite{LaWa09}:

\begin{theorem} {\label{th2.3}} Let $X$ be an augmented tree. Then

\medskip

 { (i)}  Let $\pi(x,y)$ be a canonical geodesic, then $
|x\wedge y| = l-h/2$, where $l$ and $h$ are the level
and the length of the horizontal part of the geodesic.

\medskip

{ (ii)}  $X$ is hyperbolic if and only if there exists a constant $k>0$ such that any  horizontal part of a  geodesic is bounded by $k$.
\end{theorem}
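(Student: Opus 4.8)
The plan is to prove (i) by a direct length computation and then deduce both directions of (ii) from it, with the converse direction of (ii) carrying the real content.

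For (i), I would start from the canonical decomposition $\pi(x,y)=\pi(x,u)\cup\pi(u,v)\cup\pi(v,y)$ supplied by the definition, where $\pi(u,v)$ is horizontal, sits at level $l$, and has length $h$. The one point that needs care is that the two vertical pieces are genuinely monotone ancestral paths. Since condition (ii) forces the horizontal part onto the deepest level attained by a geodesic, the vertical segments cannot descend below level $l$; being vertical and geodesic, they are therefore the unique monotone paths in $(X,{\mathcal E}_v)$ from $x$ and $y$ up to their level-$l$ ancestors $u=x^{-(|x|-l)}$ and $v=y^{-(|y|-l)}$. Hence $|\pi(x,u)|=|x|-l$, $|\pi(v,y)|=|y|-l$, so that $d(x,y)=(|x|-l)+h+(|y|-l)$. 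Substituting into the definition of the Gromov product gives $|x\wedge y|=\tfrac12(|x|+|y|-d(x,y))=l-h/2$.

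For the forward implication of (ii), suppose $X$ is hyperbolic with constant $\delta$. Given any horizontal geodesic $p_0,p_1,\dots,p_h$ at a common level $l$, we have $|p_i|=l$ and $d(p_i,p_j)=|i-j|$, so the definition of the Gromov product yields $|p_i\wedge p_j|=l-|i-j|/2$ directly. Taking the three points $p_0,p_m,p_h$ with $m=\lfloor h/2\rfloor$ and inserting them into the hyperbolic inequality gives $l-h/2\ge (l-\tfrac{h+1}{4})-\delta$, which simplifies to $h\le 4\delta+1$. Since the horizontal part of any geodesic is itself a horizontal geodesic (a subpath of a geodesic is a geodesic), every horizontal part is bounded by $k:=4\delta+1$.

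The converse of (ii) is the crux. Assume all horizontal parts of geodesics are bounded by $k$. For each pair write $l_{xy}$ for the level of the horizontal part of the canonical geodesic and $h_{xy}\le k$ for its length; by (i),
\[
l_{xy}-k/2\ \le\ |x\wedge y|\ \le\ l_{xy}.
\]
The essential observation is that $l_{xy}$ has the intrinsic description as the largest level $l$ at which the level-$l$ ancestors of $x$ and $y$ either coincide or lie in a single horizontal connected component. I would then establish the exact ultrametric inequality $l_{xy}\ge\min\{l_{xz},l_{zy}\}$: setting $m=\min\{l_{xz},l_{zy}\}$, Definition \ref{de2.2} guarantees that horizontal connectivity propagates upward to ancestors (the parents of a horizontal path again form a horizontal walk), so connectivity of the level-$l_{xz}$ ancestors of $x,z$, and of the level-$l_{zy}$ ancestors of $z,y$, descends to connectivity of their respective level-$m$ ancestors; transitivity of ``lying in one horizontal component at level $m$'' then links the level-$m$ ancestors of $x$ and $y$, forcing $l_{xy}\ge m$. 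Combining this with the two-sided bound gives
\[
|x\wedge y|\ \ge\ l_{xy}-k/2\ \ge\ \min\{l_{xz},l_{zy}\}-k/2\ \ge\ \min\{|x\wedge z|,|z\wedge y|\}-k/2,
\]
so $X$ is hyperbolic with $\delta=k/2$. The routine parts are the arithmetic in (i) and the three-point estimate above; I expect the main obstacles to be the two structural points, namely justifying the monotonicity of the vertical segments in (i) and verifying the upward-propagation step of Definition \ref{de2.2}, since the entire ultrametric inequality for the levels $l_{xy}$—and hence hyperbolicity—rests on it.
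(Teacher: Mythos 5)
Your part (i) and the forward direction of (ii) are essentially correct: the computation $d(x,y)=(|x|-l)+h+(|y|-l)$ giving $|x\wedge y|=l-h/2$, and the three-point estimate $h\le 4\delta+1$ applied to the horizontal part (which is indeed itself a geodesic), are exactly the standard arguments; note the paper does not prove Theorem \ref{th2.3} itself but quotes it from \cite{LaWa09}. One small caution in (i): the monotonicity of the vertical segments does not come from condition (ii) of the definition of canonical geodesic (that condition compares \emph{different} geodesics and pushes the horizontal part toward the root, the opposite of ``deepest''); it comes from the fact that a vertical path which turns back toward the root must repeat the parent vertex, hence cannot be geodesic.

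The converse direction of (ii), however, contains a genuine error: your ``intrinsic description'' of $l_{xy}$ is false. Let $L_{xy}$ be the largest level at which the ancestors of $x$ and $y$ coincide or lie in one horizontal component. Upward propagation does give $l_{xy}\le L_{xy}$, but not equality, and the gap $L_{xy}-l_{xy}$ is unbounded in precisely the interesting hyperbolic examples. Take $S_1(x)=x/2$, $S_2(x)=x/2+1/2$ with $J=K=[0,1]$: every level $\Sigma^n$ is a single horizontal component (a chain of $2^n$ abutting dyadic intervals), so $L_{xy}=\min\{|x|,|y|\}$ for \emph{all} pairs, while for $x=1^n$, $y=2^n$ one has $d(x,y)=2n-1$, $l_{xy}=1$ and $|x\wedge y|=1/2$. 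Thus the sandwich $|x\wedge y|\ge L_{xy}-k/2$ fails badly, and the exact ultrametric law fails for the canonical levels themselves: with $x=1^n$, $z=12^{n-1}$, $y=2^n$ one computes $l_{xz}=l_{zy}=2$ but $l_{xy}=1$. The source of the error is that connectivity of the level-$L$ ancestors may only be realized by a very long horizontal path inside the component, which no geodesic uses; the canonical level is governed by horizontal paths of length $\le k$. The correct argument (the one in \cite{LaWa09}) uses your upward-propagation observation, but applies it to the \emph{bounded} horizontal parts of two canonical geodesics rather than to whole components: given $\pi(x,z)$ and $\pi(z,y)$ with horizontal parts at levels $l_1\le l_2$ of lengths $\le k$, lift the level-$l_2$ horizontal part of $\pi(z,y)$ to level $l_1$ (lifting does not increase length, by Definition \ref{de2.2}), and concatenate it with the horizontal part of $\pi(x,z)$; since both vertical parts pass through the respective ancestors of $z$, this connects the level-$l_1$ ancestors of $x$ and $y$ by a horizontal path of length $\le 2k$, whence $d(x,y)\le |x|+|y|-2l_1+2k$ and $|x\wedge y|\ge l_1-k\ge \min\{|x\wedge z|,|z\wedge y|\}-k$, proving hyperbolicity with $\delta=k$.
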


 The premier application of the augmented trees is to  use their hyperbolic boundaries to study  the self-similar sets. Throughout the paper, we assume a self-similar set $K$ is generated by an iterated function system (IFS) $\{S_i\}_{i=1}^m$ on ${\Bbb R}^d$ where
\begin{equation} \label{eq2.2}
S_i(x) = r R_i x+d_i, \qquad i=1, \dots, m
\end{equation}
with $0<r<1$, $R_i$ is an orthogonal matrix, and $d_i \in {\Bbb
R}^d$. It is well-known that $K$ satisfies $ K=\bigcup_{i=1}^m
S_i(K). $ Let $\Sigma = \{1, \dots, m\}$ and let
$X=\bigcup_{n=0}^{\infty} \Sigma^n$ be the symbolic space
representing the IFS (by convention, $\Sigma^0 = \emptyset$, and we
still denote it by $o$ ). For ${\bf u} = i_1\cdots i_n$, we use
$S_{\bf u}$ to denote the composition $S_{i_1} \circ \cdots \circ
S_{i_n}$.

Let ${\mathcal E}_v$ be the set of vertical edges corresponding to the
nature tree structure on $X$ with $o$ as a root.  In \cite{LaWa09},
a set of horizontal edges ${\mathcal E}_h$ is defined  as
\begin{equation*}
{\mathcal E}_h =\left\{({\mathbf u},{\mathbf v}): |{\mathbf u}| =
|{\mathbf v}|,  {\mathbf u} \ne {\mathbf v}\ \text{and} \
K_{\mathbf u}\cap K_{\mathbf v} \ne \emptyset \right\},
\end{equation*}
where $K_{\mathbf u} = S_{\mathbf u}(K)$. Let ${\mathcal
E}={\mathcal E}_v \cup {\mathcal E}_h$, then $(X,{\mathcal E})$ is
an augmented tree induced by the self-similar set.

If the IFS is strongly separated (i.e., $S_i(K)\cap
S_j(K) = \emptyset$ for $i \ne j$),  then  $K$ is called {\it dust-like}.  It is clear that in this case, ${\mathcal E}_h = \emptyset$,  and $\rho_a$
coincides with the natural metric on the symbolic space:
$$
\varrho (x, y) = \exp (-a \max\{k:  x_i = y_i,\ i \leq
k\}).
$$
In \cite {LaWa09}, it was proved that under the {\it open set
condition }(OSC), Theorem \ref {th2.3}(ii) implies that the above
augmented tree is hyperbolic, and the nature map $\Phi: \partial X
\to K$ is a homeomorphism. Moreover if in addition, the IFS
satisfies

\medskip

\noindent  \emph{condition (H): \ \  there exists a constant
$c>0$ such that for any integer $n\geq 1$ and words ${\mathbf
u,v}\in \Sigma^n$, }
\begin{equation} \label{eq2.4}
K_{\mathbf u}\cap K_{\mathbf v} = \emptyset \quad \Rightarrow
\quad \hbox {dist} (K_{\mathbf u},K_{\mathbf v})\geq cr^n.
\end{equation}
Then for $\alpha = -\log r/a$, $\Phi$ satisfies the following
H\"older equivalent property:
\begin{equation*}
C^{-1}|\Phi(\xi) - \Phi(\eta)|  \leq  \rho_a (\xi, \eta)^\alpha
\leq C|\Phi(\xi) - \Phi(\eta)| \quad \forall \ \xi, \eta \in
\partial X.
\end{equation*}

Condition (H) is satisfied by the standard self-similar sets, for example, the generating IFS has the OSC and all the parameters of the similitudes are
integers. However there are also examples that condition (H) is not
satisfied (see \cite{LaWa09} for an example such that the
similitudes involve irrational translations).

From Definition \ref{de2.2}, we see that the choice of the
horizontal edges for the augmented tree can be quite flexible, for
example we can use $K_{\mathbf u}\cap K_{\mathbf v}$ to have
positive dimension to define ${\mathcal E}_h$. In this paper, we
will use another  setting by replacing $K$ with a bounded closed
invariant set $J$ (i.e., $S_i(J) \subset J$ for each $i$), namely
\begin{equation} \label {eq2.5}
{\mathcal E}_h =\left\{({\mathbf u},{\mathbf v}): |{\mathbf u}| =
|{\mathbf v}|,  {\mathbf u} \ne {\mathbf v}\ \text{and} \
J_{\mathbf u}\cap J_{\mathbf v} \ne \emptyset \right\}.
\end{equation}
We can take $J = K$ as before or in many situations, take $J= \overline U$ for the $U$ in the OSC  (see the examples in Section 5). The above statements on
the hyperbolicity of the augmented tree and the homeomorphism of the
boundary still valid by adopting the same proof; for the H\"older equivalence, we use the following modification of condition (H) for $J$, which will be used again in proving Proposition \ref{th3.8}.

\begin{Lem} \label{th2.4} Suppose the IFS in (\ref{eq2.2}) satisfies condition (H), then for any bounded closed invariant set $J$, there exists $c'>0$ and $k\geq 0$ such that for any $n \geq 0$ and ${\bf u}, {\bf v} \in \Sigma^n$,
\begin{equation*}
J_{\bf u} \cap J_{\bf v} = \emptyset \ \ \Rightarrow \ \ \hbox
{dist} (J_{\bf ui}, J_{\bf vj}) \geq c'r^n \qquad \forall \ {\bf
i},\  {\bf j} \in \Sigma^k.
\end{equation*}
\end{Lem}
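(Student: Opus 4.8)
The plan is to transfer condition (H) from the attractor $K$ to the invariant set $J$ by exploiting two facts: that $K$ sits inside $J$, and that deep descendants of $J$ cling tightly to $K$. Let $F(A)=\bigcup_{i=1}^m S_i(A)$ denote the Hutchinson operator; it is a contraction of ratio $r$ on the compact subsets of ${\Bbb R}^d$ under the Hausdorff metric $d_H$, with unique fixed point $K$. First I would observe that the invariance $S_i(J)\subseteq J$ gives $F(J)\subseteq J$, so $\{F^k(J)\}$ is a decreasing sequence of compact sets whose Hausdorff limit is $K$; hence $K=\bigcap_k F^k(J)\subseteq J$. Consequently $K_{\mathbf u}=S_{\mathbf u}(K)\subseteq S_{\mathbf u}(J)=J_{\mathbf u}$ for every word $\mathbf u$, so the hypothesis $J_{\mathbf u}\cap J_{\mathbf v}=\emptyset$ forces $K_{\mathbf u}\cap K_{\mathbf v}=\emptyset$, and condition (H) then supplies the gap $\text{dist}(K_{\mathbf u},K_{\mathbf v})\ge cr^n$.

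Next I would control how far the descendant $J_{\mathbf{ui}}$ can protrude beyond $K_{\mathbf u}$. Writing $D_0=d_H(J,K)<\infty$ (finite because $J$ is bounded), the contraction estimate $d_H(F^k(J),K)=d_H\bigl(F^k(J),F^k(K)\bigr)\le r^k D_0$ shows that every point of $F^k(J)=\bigcup_{\mathbf i\in\Sigma^k}J_{\mathbf i}$ lies within $r^k D_0$ of $K$. Since $S_{\mathbf u}$ scales all distances by the factor $r^n$ when $\mathbf u\in\Sigma^n$, applying $S_{\mathbf u}$ shows that every point of $J_{\mathbf{ui}}=S_{\mathbf u}(J_{\mathbf i})$ lies within $r^{n+k}D_0$ of $K_{\mathbf u}$, and likewise every point of $J_{\mathbf{vj}}$ lies within $r^{n+k}D_0$ of $K_{\mathbf v}$.

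Finally, for $x\in J_{\mathbf{ui}}$ and $y\in J_{\mathbf{vj}}$ I would choose $p\in K_{\mathbf u}$ and $q\in K_{\mathbf v}$ with $|x-p|,|y-q|\le r^{n+k}D_0$, and combine the bound $|p-q|\ge cr^n$ with the triangle inequality to obtain
\[
|x-y|\ \ge\ cr^n-2r^{n+k}D_0\ =\ r^n\bigl(c-2r^kD_0\bigr).
\]
Fixing $k$ so large that $2r^kD_0\le c/2$ (equivalently $r^k\le c/(4D_0)$) then gives $\text{dist}(J_{\mathbf{ui}},J_{\mathbf{vj}})\ge (c/2)\,r^n$, which is the assertion with $c'=c/2$; the case $n=0$ is vacuous, since then $\mathbf u=\mathbf v$ and $J_{\mathbf u}\cap J_{\mathbf v}=J\ne\emptyset$. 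The one point that needs care is that a single $k$ must work simultaneously for all $n$, $\mathbf u$, and $\mathbf v$; this is precisely what the self-similar scaling delivers, since both the overhang $r^{n+k}D_0$ and the gap $cr^n$ carry the common factor $r^n$, so the admissibility condition $r^k\le c/(4D_0)$ is independent of $n$, $\mathbf u$, $\mathbf v$.
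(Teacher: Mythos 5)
Your proof is correct and follows essentially the same route as the paper's: both deduce $K_{\mathbf u}\cap K_{\mathbf v}=\emptyset$ from $K\subseteq J$, invoke condition (H) for the gap $cr^n$, and absorb the Hausdorff deviation of the level-$(n+k)$ pieces $J_{\mathbf{ui}}$ from $K$ (of size $r^{n+k}d_H(J,K)$) by choosing $k$ uniformly in $n$. The only differences are cosmetic: you obtain $c'=c/2$ where the paper takes $c'=c/3$, and you supply the short Hutchinson-operator argument for $K\subseteq J$ that the paper states without proof.
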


\begin{proof}  Let $c$ be the constant in the definition of condition (H). For the bounded closed invariant set $J$, we have $K \subseteq J$ and   the Hausdorff distance $ d_H(K_{\bf i}, J_{\bf i}) \leq c_1r^k$ for all ${\bf i}\in \Sigma^k$.  In particular we choose $k$ such that $c_1r^k < c/3$.

Now if ${\bf u}, {\bf v} \in \Sigma^n, \ J_{\bf u} \cap J_{\bf v} = \emptyset$, then $K_{\bf u} \cap K_{\bf v} = \emptyset$, it follows from condition (H) that $\hbox { dist} (K_{\bf u}, K_{\bf v}) \geq cr^n$ for ${\bf u}, {\bf v} \in \Sigma^n$. Applying this and the above to $n+k$, we have
$$
\begin{aligned}
\hbox { dist} (J_{\bf ui}, J_{\bf vj}) & \geq  \hbox { dist} (K_{\bf ui}, K_{\bf vj}) -  d_H(K_{\bf ui}, J_{\bf ui})- d_H(K_{\bf vj}, J_{\bf vj})\\
&  \geq
c r^n -(2c/3)\ r^{n}\geq (c/3) r^n  \qquad \forall \ {\bf i},\  {\bf j} \in \Sigma^k.
\end{aligned}
$$
The lemma follows by taking $c' = c/3$.
\end{proof}

We remark that the augmented tree $(X, \mathcal E)$ depends on the
choice of the bounded invariant set $J$.  But under the OSC, the
hyperbolic boundary is the same as they can be identified with the
underlying self-similar set.

We conclude this section with the following simple relationship of
the totally disconnected self-similar set and the structure of the
augmented tree. The more explicit study of their Lipschitz equivalence
will be carried out in detail in the rest of the paper.  By a {\it
horizontal connected component}  of an augmented tree, we mean a
maximal connected horizontal subgraph on some level $\Sigma^n$ of $X$.

\begin{Prop} \label{additional prop}
Suppose the cardinality of any horizontal component in the augmented
tree  induced by the IFS in (\ref{eq2.2}) is uniformly bounded, then
the associated self-similar set $K$ is totally disconnected.

The converse is also true if the about IFS is on ${\Bbb R}^1$ and satisfies the OSC.
\end{Prop}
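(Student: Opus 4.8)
The plan is to prove the two implications separately. The forward implication holds in ${\Bbb R}^d$, while the converse uses the order structure of ${\Bbb R}^1$ and the OSC in an essential way; both are most naturally argued by contraposition.

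For the forward implication, suppose $K$ is \emph{not} totally disconnected, so it has a connected component $C$ with $\mathrm{diam}(C)=\delta>0$; I will produce horizontal components of unbounded cardinality. Fix a level $n$ and consider the cells meeting $C$. First I claim they form a connected subgraph of the horizontal graph on $\Sigma^n$: if $\{{\bf u}\in\Sigma^n:K_{\bf u}\cap C\neq\emptyset\}$ split into nonempty $A,B$ with no horizontal edge joining them, then $\bigcup_{{\bf u}\in A}K_{\bf u}$ and $\bigcup_{{\bf u}\in B}K_{\bf u}$ would be disjoint compact sets, each meeting $C$ and together covering it, contradicting the connectedness of $C$. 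Hence all these cells lie in a single horizontal component. To bound their number below, pick $x,y\in C$ with $|x-y|=\delta$ and set $f(z)=|z-x|$; then $f(C)$ is connected and contains $[0,\delta]$, while each $f(K_{\bf u}\cap C)$ has diameter at most $\mathrm{diam}(K_{\bf u})=r^n\,\mathrm{diam}(K)$. Covering $[0,\delta]$ by such sets forces at least $\delta/(r^n\,\mathrm{diam}(K))$ of them, which tends to $\infty$. Thus the component containing these cells has unbounded cardinality, the desired contradiction. The argument is unchanged if the horizontal edges come from any bounded invariant $J\supseteq K$, since then $K_{\bf u}\cap K_{\bf v}\neq\emptyset$ implies $J_{\bf u}\cap J_{\bf v}\neq\emptyset$.

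For the converse, assume $K\subseteq{\Bbb R}$ is totally disconnected and the IFS satisfies the OSC. I normalize $I=[\min K,\max K]=[0,1]$ and take $J=I$, so each cell is an interval $I_{\bf u}=S_{\bf u}(I)$ of length $r^n$ and a horizontal component $T$ at level $n$ spans a genuine interval $E=\bigcup_{{\bf u}\in T}I_{\bf u}$. Since $K$ is totally disconnected and compact it is not an interval, so it has a largest complementary gap $(p,q)$ with $p,q\in K$ and $q-p=\gamma>0$. The key observation is a \emph{no-straddle} property: no cell $I_{\bf u}$ can meet both sides of $(p,q)$, for otherwise $(p,q)$ would lie inside a gap of the similar copy $K_{\bf u}$, all of whose gaps have length $\le r^n\gamma<\gamma$. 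Applying this to the rescaled gaps $S_{\bf w}((p,q))$ of size $r^{|{\bf w}|}\gamma$, I find that every $K$-gap lying inside the span $E$ of a level-$n$ component has size at most $r^n\gamma$; consequently $E$ is contained in a maximal "block" $E^\ast$ bounded on each side by a gap of size $>r^n\gamma$. Finally, the OSC gives pairwise disjoint pieces $S_{\bf u}(U)\subseteq E$, so $|T|\,r^n|U|\le|E|\le|E^\ast|$, and the whole problem reduces to bounding the block length: $|T|$ is uniformly bounded as soon as $|E^\ast|\le C\,r^n$ uniformly in $n$.

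The main obstacle is precisely this last uniform estimate $|E^\ast|\le C\,r^n$, i.e.\ ruling out arbitrarily long horizontal chains of vanishingly small diameter, and this is where both the line structure and the OSC are indispensable. I expect to obtain it by a self-similar blow-up argument: a block $E^\ast$ at resolution $n$ is an interval inside which $K$ has no gap exceeding $r^n\gamma$, so the rescaled set $r^{-n}\big(K\cap E^\ast\big)$ is an interval-length-$(|E^\ast|/r^n)$ portion of a copy of $K$ whose gaps are all of size $\le\gamma$ but whose \emph{relative} largest gap is being pushed to $0$ as $|E^\ast|/r^n\to\infty$; passing to a Hausdorff limit of such pieces along a putative unbounded sequence would yield a nondegenerate interval contained in $K$ (a tangent set with no gaps), contradicting total disconnectedness. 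The delicate points to be handled are that the limiting pieces have vanishing gaps only after correctly accounting for the finitely many gap-orbits of $K$ (not just the largest one), and that the OSC is needed both to convert cardinality into span through the disjoint pieces $S_{\bf u}(U)$ and to guarantee that the blow-up limit genuinely lies in $K$ rather than in an overlap artifact.
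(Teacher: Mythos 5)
Your forward implication is correct, and it runs parallel to the paper's argument with a nicer quantitative twist. The paper proceeds by constructing, level by level, a finite chain of cells $K_{{\bf i}_1},\dots,K_{{\bf i}_\ell}$ that successively touch one another and eventually cover the nontrivial component $C$, then notes $\ell\to\infty$ as $n\to\infty$ since $\operatorname{diam}K_{{\bf i}_j}=r^n|K|\to 0$. You instead show in one stroke that \emph{all} cells meeting $C$ lie in a single horizontal component (the two-sided separation argument) and then use the $1$-Lipschitz map $f(z)=|z-x|$ to force at least $\delta/(r^n\operatorname{diam}K)$ of them; both are valid, and your version yields an explicit lower bound rather than an inductive chain.

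The converse, however, contains a genuine gap — one you flag yourself. Everything in your scheme reduces to the uniform estimate $|E^{\ast}|\le C\,r^n$, and the blow-up argument you sketch does not deliver it as stated: a Hausdorff limit of the rescaled pieces $r^{-n}(K\cap E^{\ast})$ is a \emph{weak tangent} of $K$, not a subset of (a similar copy of) $K$, so "a nondegenerate interval contained in $K$" does not follow and no contradiction with total disconnectedness is reached. Weak tangents of totally disconnected sets can in general contain intervals; to rule this out here you would need a dimension input, e.g.\ that under the OSC $K$ is Ahlfors $s$-regular, that weak tangents of $s$-regular sets are $s$-regular, and that an interval in the tangent forces $s=1$ — which is exactly the fact the paper invokes at the outset: by Schief \cite{Sc94}, a totally disconnected self-similar set $K\subset{\Bbb R}$ satisfying the OSC has $\dim_HK=s<1$. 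Granting that, the paper's converse is three lines and needs none of your no-straddle or block machinery: if $k$ cells of level $n$ form a horizontal chain and $G$ is the smallest interval containing them, then $t=|G|\le k\,r^n|K|$, while $\mu={\mathcal H}^s|_K$ gives $\mu(B(x,t))\ge k\,r^{ns}$ for $x\in G\cap K$ (the cells overlap in $\mu$-measure zero by the OSC); the upper regularity $\mu(B(x,t))\le C_2t^s$ then yields $k^{1-s}\le C_2|K|^s$, and $k$ is bounded precisely because $s<1$. In short: your reduction to $|E^{\ast}|\le C\,r^n$ is fine, but completing it would essentially force you to re-derive the dimension fact, and the measure-counting route through it is both shorter and already covers your stronger choice $J=[0,1]$ of invariant set.
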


\begin{proof}
Suppose $K$ is not totally disconnected, then there is a connected
component $C\subset K$ contains more that one point. Note that for
any $n>0$, $K = \bigcup_{{\bf i}\in\Sigma^n} K_{\bf i}$. Let
$K_{{\bf i}_1} \cap C \not = \emptyset$. If $C\setminus K_{{\bf
i}_1} \not = \emptyset$,  then it is a relatively open set in $C$,
and as $C$ is connected,
$$
\partial_C (C \setminus K_{{\bf i}_1}) \cap   \partial_C (K_{{\bf i}_1}\cap C) \not = \emptyset.
$$
($\partial_C (E)$ means the relative boundary  of $E$ in $C$). Let $x$
be in the intersection,  there exists ${\bf i}_2 \in \Sigma^n$ such
that $ x \in K_{{\bf i}_1} \cap  K_{{\bf i}_2}$ and $K_{{\bf
i}_2}\cap (C \setminus K_{{\bf i}_1}) \not = \emptyset$.

Inductively, if \  $ \bigcup _{j=1}^kK_{{\bf i}_j}$ does not cover
$C$,  then we can repeat the same procedure to find ${\bf i}_{k+1}
\in \Sigma^n$ such that
$$
K_{{\bf i}_{k+1}} \cap  ( {\bigcup}_{j=1}^k K_{{\bf i}_j}) \not =
\emptyset \quad \hbox {and} \quad K_{{\bf i}_{k+1}}\cap (C \setminus
{\bigcup}_{j=1}^kK_{{\bf i}_j}) \not = \emptyset.
$$
Since $K = \bigcup_{{\bf i}\in\Sigma^n} K_{\bf i}$, this process
must end at some step, say $\ell$, and in this case $C \subset
{\bigcup}_{j=1}^\ell K_{{\bf i}_j}$. Since the diameter $|K_{{\bf
i}_j}| = r^n |K| \to 0$ as $n \to \infty$, $\ell$ must tend to
infinity, which contradicts the uniform boundedness of the horizontal connected components $\Sigma^n$.

For the converse,  we assume that the IFS is defined on ${\Bbb R}$.  Note that if $K \subset {\Bbb R}$ is totally
disconnected, then $\dim_H K=s<1$ \cite{Sc94}. Let  $\mu$  denote
the restriction of the $s$-Hausdorff measure on $K$. Without loss of
generality, we assume
 $\mu(K)=1$. Then it is well-known that for any point $x\in K$
and any $0< t< |K|$ (where $|K|$ denotes the diameter of $K$),
\begin{equation*}
C_1<\frac{\mu(B(x,t))}{t^s}<C_2,
\end{equation*}
where $C_1,C_2$ are constants independent of $x$ and $t$.

Suppose  ${\mathbf i}_1,{\mathbf i}_2,\dots,{\mathbf i}_k$ is a
finite sequence of distinct words in $\Sigma^n$ and is in a
horizontal connected component (we take $J= K$ for convenience),
i.e., $K_{{\mathbf i}_j}\cap K_{{\mathbf i}_{j+1}}\ne \emptyset$ for
$1\leq j\leq k-1$. Let $G$ be the smallest interval containing
$K_{{\mathbf i}_1},\dots,K_{{\mathbf i}_k}$. Let $x\in G \cap K$,
and let  $t= |G|$. Then the Hausdorff measure $\mu$ implies
$$
 \mu(B(x,t))\geq  kr^{ns}.
$$
This together with $ t= |G|\leq
kr^n |K|$ implies \begin{equation*}
\frac{k^{1-s}}{|K|^s} = \frac{kr^{ns}}{(kr^n
|K|)^s}\leq \frac{\mu(B(x,t))}{t^s}  \leq C_2.
\end{equation*}
Hence $k$ is uniformly bounded.
\end{proof}
\end{section}

\bigskip

\begin{section}{\bf  Lipschitz equivalence and the main theorems}

In this rest of the paper, unless otherwise stated, we will assume
the augmented tree $(X, {\mathcal E})$  is
associated with the symbolic space of the IFS $\{S_i\}_{i=1}^m$ in (\ref{eq2.2}),  and  ${\mathcal E} = {\mathcal E}_h \cup{\mathcal E}_v$ where ${\mathcal E}_h$  is defined by a fixed bounded closed invariant set $J$ as in (\ref {eq2.5}). We introduce a class of mappings between two hyperbolic graphs which plays a key role in the Lipschitz equivalence.

\begin{Def}
Let $X$ and $Y$ be two hyperbolic graphs and let $\sigma: X
\rightarrow Y$  be a bijective map. We say that $\sigma$ is a {
near-isometry} if there exists $c>0$ such that
$$
 \big ||\pi(\sigma(x),\sigma(y))|-|\pi(x,y)|\big| \leq c \qquad \forall  \ x, y \in X.
$$
\end{Def}

\noindent {\bf Remark}.   By checking $\pi (o, x)$, it is easy to show
that the above definition  implies  $\big | |\sigma (x)|
- |x| \big | \leq c + 3|\sigma (o)|+k$ where $k$ is the bound of the
horizontal geodesic in  Theorem \ref{th2.3}(ii).
Hence the above definition is equivalent to
$$
 \big||\sigma(x)| -|x|\big| <c, \quad \big ||\pi(\sigma(x),\sigma(y))|-|\pi(x,y)|\big| \leq c \qquad \forall  \ x, y \in X.
$$
(with different constant $c$)

\begin{Prop} \label{th3.3} Let $X$, $Y$ be two hyperbolic augmented trees that are equipped with the
hyperbolic metrics with the same parameter  $a$ (as in (\ref{eq2.0})). Suppose there exists a near-isometry  $\sigma : X \to Y$, then \ $\partial X \simeq \partial Y$.
\end{Prop}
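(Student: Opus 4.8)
The plan is to show that a near-isometry distorts the Gromov product by only a bounded amount, so that it is already bi-Lipschitz for the hyperbolic (quasi-)metric $\rho_a$ on the vertex sets, and then to push this to the boundaries by the universal property of completions. The hypothesis that $X$ and $Y$ carry the \emph{same} parameter $a$ is what makes the two exponentials match and yields a genuine bi-Lipschitz (rather than merely H\"older) map.

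\emph{Step 1 (the Gromov product is almost preserved).} Writing $d(x,y)=|\pi(x,y)|$, the defining inequality gives $|d(\sigma x,\sigma y)-d(x,y)|\le c$, while the Remark after the definition of near-isometry gives $\big||\sigma x|-|x|\big|\le c$ (after adjusting the constant). Since $|x\wedge y|=\tfrac12\big(|x|+|y|-d(x,y)\big)$, subtracting the same expression for the pair $\sigma x,\sigma y$ yields
\[
\big||\sigma x\wedge\sigma y|-|x\wedge y|\big|\le \tfrac32 c=:C_0 .
\]
With $\Lambda:=e^{aC_0}$ and $\rho_a(x,y)=\exp(-a|x\wedge y|)$, this reads $\Lambda^{-1}\rho_a(x,y)\le\rho_a(\sigma x,\sigma y)\le\Lambda\,\rho_a(x,y)$ for all $x,y\in X$. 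Hence $\sigma$ is a bi-Lipschitz bijection of $(X,\rho_a)$ onto $(Y,\rho_a)$, and so also of the path metrics $\theta_a$, which are Lipschitz equivalent to $\rho_a$.

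\emph{Step 2 (extension to the boundary as a bijection).} Being bi-Lipschitz, $\sigma$ is uniformly continuous and therefore extends uniquely to a continuous map $\bar\sigma\colon\hat X\to\hat Y$ of the completions. The inverse $\sigma^{-1}\colon Y\to X$ is itself a near-isometry, since the defining inequality is symmetric in $\sigma,\sigma^{-1}$ (substitute $u=\sigma x,v=\sigma y$ and use surjectivity), so it extends to $\overline{\sigma^{-1}}\colon\hat Y\to\hat X$. The composites $\overline{\sigma^{-1}}\circ\bar\sigma$ and $\bar\sigma\circ\overline{\sigma^{-1}}$ are continuous and restrict to the identity on the dense sets $X$ and $Y$, hence are the identity; thus $\bar\sigma$ is a homeomorphism of $\hat X$ onto $\hat Y$ with $\bar\sigma|_X=\sigma$. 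Since $\bar\sigma(X)=\sigma(X)=Y$, it carries $\partial X=\hat X\setminus X$ bijectively onto $\partial Y=\hat Y\setminus Y$.

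\emph{Step 3 (bi-Lipschitz on the boundary).} The estimate of Step 1 passes to the boundary by density and continuity: for $\xi,\eta\in\partial X$ pick $x_n\to\xi$ and $y_n\to\eta$ with $x_n,y_n\in X$; then $\sigma x_n\to\bar\sigma\xi$ and $\sigma y_n\to\bar\sigma\eta$, and letting $n\to\infty$ in the two-sided inequality gives
\[
\Lambda^{-1}\rho_a(\xi,\eta)\le\rho_a(\bar\sigma\xi,\bar\sigma\eta)\le\Lambda\,\rho_a(\xi,\eta),
\]
so $\bar\sigma|_{\partial X}$ is the required bi-Lipschitz bijection and $\partial X\simeq\partial Y$. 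The point needing the most care is exactly this passage to the limit: because $\rho_a$ only satisfies the relaxed triangle inequality with factor $1+a'$, one should run the limiting argument with the genuine metric $\theta_a$ (which is continuous on its own completion) and then invoke $\theta_a\asymp\rho_a$, at the cost of an extra fixed constant that is harmless for Lipschitz equivalence. Everything else reduces to the elementary Gromov-product computation of Step 1 and the standard fact that a bi-Lipschitz bijection extends to a bijection of completions.
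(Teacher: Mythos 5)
Your proposal is correct and takes essentially the same route as the paper: show that the near-isometry distorts the Gromov product by at most an additive constant, exponentiate to get a bi-Lipschitz bijection for $\rho_a$ on the vertex sets, and extend to the boundaries. The only (harmless) differences are that you compute $|x\wedge y|$ directly from its definition $\tfrac12\bigl(|x|+|y|-d(x,y)\bigr)$ rather than via canonical geodesics and Theorem \ref{th2.3}(i) as the paper does (which even spares you the hyperbolicity constant $k$), and that you spell out the completion/extension and $\theta_a\asymp\rho_a$ details that the paper leaves as a one-line remark.
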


\begin{proof} \ With the notation as in Theorem \ref{th2.3}(i), it follows  that for $x\ne y\in X$,
$$
|\pi(x,y)| =|x|+|y|-2l+h,  \quad |\pi(\sigma(x),\sigma(y))|=|\sigma(x)|+|\sigma(y)|-2l^{\prime}+h^{\prime}.
$$
  From the definition of $\sigma$ (and the remark), we have
$$
\big ||\sigma(x)| -|x|\big |, \ \big||\sigma(y)|-|y|\big |\leq c,\
|l-l^{\prime}|\leq 3c/2+k/2, \quad \hbox {and}\quad
|h-h^{\prime}|\leq k
$$
for some $k>0$  (where $k$ is the hyperbolic constant as in Theorem
\ref{th2.3}(ii)). By Theorem \ref {th2.3}(i),
$$
|x\wedge y| = l-h/2 \quad \hbox {and} \quad |\sigma(x)\wedge
\sigma(y)| =l^{\prime}-h^{\prime}/2.
$$
It follows that
$$
\big || \sigma(x)\wedge \sigma(y)|  - |x\wedge y|\big |\  = \
|l^{\prime}-h^{\prime}/2-l+h/2| \ \leq \ 3c/2+k \ : = k' .
$$
Let $\lambda= e^{ak^{\prime}},$ together with the definition of the
ultra-metric $\rho_a(x,y)= \exp(-a |x\wedge y|)$ in (\ref{eq2.0}),
we conclude that
$$
{\lambda}^{-1}{\rho_a}(x,y)\leq {\rho_a}(\sigma(x),\sigma(y))\leq
\lambda {\rho_a}(x,y) \qquad   \forall \ x, y \in X.
$$
By extending the metrics to the boundaries $\partial X,\  \partial
Y$, the above implies  $\sigma$ is a  bi-Lipschitz map from
$\partial X$ onto $\partial Y$.
\end{proof}

Let $\mathscr{C}$ be the set of all horizontal connected components
of $X$.   For $T\in \mathscr {C}$, we let $T\Sigma = \{{\bf u}i:
{\bf u} \in T, i \in \Sigma\}$ be the set of offsprings of $T$.
Note that if two distinct components $T, T' \in {\mathscr {C}}$ lie
in the same level, then $T\Sigma$ is not connected to $T'\Sigma$,
equivalently,
\begin{equation} \label{eq3.0}
\left({\bigcup}_{{\bf i}\in T\Sigma}J_{\bf i}\right)\cap
\left({\bigcup}_{{\bf j}\in T'\Sigma}J_{\bf j}\right)  = \emptyset.
\end{equation}
This follows easily from $S_{{\bf u}i}(J) \cap S_{{\bf v}j}
(J)\subset S_{\bf u} (J) \cap S_{\bf v} (J)= \emptyset$ for all
${\bf u} \in T,\  {\bf v} \in T',  \ i,j \in \Sigma $.

By regarding  $T\cup T\Sigma$ as a
subgraph in $X$.  We say that $T, T'\in \mathscr {C}$ are
{\it equivalent}, denoted by $T\sim T'$, if there exists a  graph
isomorphism
$$
g:\ T\cup T\Sigma \ \to \ T'\cup
T'\Sigma,
$$
 that is,  $g$ is a bijection such that $g$ and $g^{-1}$
preserve the vertical and horizontal edges.  It is easy to check that
$\sim$ is indeed an equivalence relation.  We use $[T]$ to denote
the equivalence class and call it a {\it connected class} determined
by $T$. Obviously, $\{o\}$ is the connected class determined by the
root $o$.

\begin{Def}
An augmented tree $X$ is called  { simple} (with respect to the
defining bounded closed invariant set $J$) if there are finitely
many connected classes, i.e., $\mathscr{C}/{\sim}$ is finite.
\end{Def}

\begin{Prop} \label{th3.4}
A simple augmented tree is always hyperbolic.
\end{Prop}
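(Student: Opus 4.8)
The plan is to invoke the hyperbolicity criterion of Theorem \ref{th2.3}(ii): the augmented tree $X$ is hyperbolic if and only if there is a constant $k>0$ bounding the length of the horizontal part of every geodesic. It therefore suffices to produce such a $k$ out of the simplicity hypothesis, and the natural candidate is a uniform bound on the sizes of the horizontal connected components.

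First I would show that simplicity forces $\sup_{T \in \mathscr{C}} |T| < \infty$. The key observation is that any graph isomorphism $g : T \cup T\Sigma \to T' \cup T'\Sigma$ appearing in the definition of $T \sim T'$ must carry $T$ onto $T'$. Indeed, inside the subgraph $T \cup T\Sigma$ the vertices of $T\Sigma$ are precisely those incident to exactly one vertical edge, while each vertex of $T$ is incident to its $m$ offspring and hence to $m$ vertical edges; since $g$ preserves vertical edges it respects this dichotomy, so $g|_T$ is a horizontal-edge isomorphism of $T$ onto $T'$ and in particular $|T| = |T'|$. As $\mathscr{C}/{\sim}$ is finite, the set $\{|T| : T \in \mathscr{C}\}$ is finite, giving a uniform bound $|T| \le M$ for all $T \in \mathscr{C}$.

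Next I would bound the horizontal part of an arbitrary geodesic. A horizontal edge joins two vertices of the same level, so any maximal horizontal subpath of a geodesic stays within a single level $\Sigma^n$ and, being horizontally connected, lies inside one component $T \in \mathscr{C}$. A geodesic never repeats a vertex, so this subpath runs through at most $|T| \le M$ distinct vertices and hence has length at most $M-1$. Taking $k = M$ in Theorem \ref{th2.3}(ii) then yields that $X$ is hyperbolic.

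The step requiring the most care is the first one, namely verifying that the abstract isomorphism defining $\sim$ really does force $|T| = |T'|$; this depends on recovering the two-level structure of $T \cup T\Sigma$ from its vertical incidences alone. Once the uniform cardinality bound $M$ is secured, the passage to Theorem \ref{th2.3}(ii) is routine.
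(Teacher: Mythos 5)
Your proof is correct and takes essentially the same route as the paper: both arguments bound the horizontal part of any geodesic by the cardinality of the horizontal connected component containing it, obtain a uniform bound on these cardinalities from the finiteness of $\mathscr{C}/{\sim}$, and then invoke Theorem \ref{th2.3}(ii). The only difference is that you explicitly verify, via the vertical-edge incidences in $T\cup T\Sigma$, that the isomorphism defining $\sim$ carries $T$ onto $T'$ and hence forces $\#T=\#T'$ --- a fact the paper records without proof as clear.
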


\begin{proof} Note that for each geodesic $\pi(x, y)$ in $X$, the horizontal part must be contained in a horizontal component of
the augmented tree. Since there are finitely many connected classes
$[T]$, and each $T$ contains finitely many vertices, it follows that
the horizontal part of  $\pi(x, y)$ is uniformly bounded, and hence hyperbolic by Theorem
\ref{th2.3}(ii).
\end{proof}

In the following we show that the hyperbolic boundary of a simple augmented tree is H\"older equivalent to the self-similar set, which is similar to the case in \cite{LaWa09} for the OSC.

\begin{Prop}\label{th3.8}
Let $\{S_i\}_{i=1}^m$ be an IFS satisfies condition (H) in
(\ref{eq2.4}), and assume that the corresponding augmented tree
$(X,{\mathcal E})$ is simple.  Then there exists a bijection
$\Phi:\partial X\to K$ satisfying the H\"older equivalent property:
\begin{align}\label{eq3.2}
C^{-1}|\Phi(\xi)-\Phi(\eta)|\leq \rho_a(\xi,\eta)^{\alpha}\leq
C|\Phi(\xi)-\Phi(\eta)|,
\end{align}
where $\alpha=-\log r/a$ and $C>0$ is a constant.
\end{Prop}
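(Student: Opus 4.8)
The plan is to follow the construction of the natural boundary map from \cite{LaWa09}, but to extract all the metric estimates from the combinatorics of the horizontal components instead of from the OSC. Following \cite{LaWa09}, each $\xi\in\partial X$ is represented by a vertical geodesic ray, i.e.\ an infinite word $\mathbf{i}=i_1i_2\cdots\in\Sigma^\infty$, and I would define $\Phi(\xi)=\pi(\mathbf{i}):=\bigcap_{n}K_{\mathbf{i}|_n}$, the point of $K$ coded by $\mathbf{i}$. Surjectivity is then immediate, since every $x\in K$ admits such a coding. The entire statement reduces to the two-sided estimate \eqref{eq3.2} for distinct boundary points: the right-hand inequality (a \emph{lower} bound on $|\Phi(\xi)-\Phi(\eta)|$) forces $\Phi(\xi)\ne\Phi(\eta)$ whenever $\xi\ne\eta$, giving injectivity, while the left-hand inequality (an \emph{upper} bound on $|\Phi(\xi)-\Phi(\eta)|$) shows that two rays representing the same $\xi$ produce the same point, giving well-definedness.

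The combinatorial quantity driving both bounds is the separation level. For $\xi,\eta$ coded by $\mathbf{i},\mathbf{j}$ I would set
$$N=N(\mathbf{i},\mathbf{j})=\max\{n\ge 0:\ \mathbf{i}|_n\ \text{and}\ \mathbf{j}|_n\ \text{lie in the same horizontal component}\}.$$
A short projection-to-parents argument (a connecting horizontal chain at level $n+1$ maps, under $x\mapsto x^{-1}$, to one at level $n$) shows that the set of levels at which the two prefixes share a component is an initial segment $\{0,\dots,N\}$, so $N$ is well defined, and that at level $N+1$ the two prefixes lie in distinct components; by the definition \eqref{eq2.5} of $\mathcal{E}_h$ this means exactly $J_{\mathbf{i}|_{N+1}}\cap J_{\mathbf{j}|_{N+1}}=\emptyset$. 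Next I would identify $N$ with the Gromov product up to a bounded error. Since $X$ is simple it is hyperbolic (Proposition \ref{th3.4}), with a uniform bound $k$ on horizontal parts of geodesics and on the diameters of horizontal components; Theorem \ref{th2.3}(i) writes $|\mathbf{i}|_n\wedge\mathbf{j}|_n|=l-h/2$, where any geodesic between deep prefixes places its horizontal part, of length $h\le k$, at some level $l\le N$. A length comparison (each extra level of descent saves two vertical steps, while the horizontal part is always bounded by $k$) pins $l$ into the window $[N-k,N]$, so that letting $n\to\infty$,
$$r^{N}\ \le\ \rho_a(\xi,\eta)^\alpha\ =\ r^{|\xi\wedge\eta|}\ \le\ r^{-k}\,r^{N},$$
using $a\alpha=-\log r$.

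With this equivalence in hand the two geometric estimates are routine. For the upper bound I would walk along a horizontal chain $\mathbf{i}|_N=w_0,\dots,w_p=\mathbf{j}|_N$ inside the common component at level $N$ (so $p\le k$ and $J_{w_t}\cap J_{w_{t+1}}\ne\emptyset$); since $\Phi(\xi)\in K_{\mathbf{i}|_N}\subset J_{w_0}$ and $\Phi(\eta)\in J_{w_p}$, the diameter of $\bigcup_t J_{w_t}$ gives $|\Phi(\xi)-\Phi(\eta)|\le (k+1)|J|\,r^{N}$, which with $r^N\le\rho_a^\alpha$ yields the left inequality of \eqref{eq3.2}. For the lower bound I would use the disjointness $J_{\mathbf{i}|_{N+1}}\cap J_{\mathbf{j}|_{N+1}}=\emptyset$, which forces $K_{\mathbf{i}|_{N+1}}\cap K_{\mathbf{j}|_{N+1}}=\emptyset$ since $K\subset J$; condition (H), in the form of Lemma \ref{th2.4}, then gives $\mathrm{dist}(K_{\mathbf{i}|_{N+1}},K_{\mathbf{j}|_{N+1}})\ge c\,r^{N+1}$, and as $\Phi(\xi),\Phi(\eta)$ lie respectively in these two cells, $|\Phi(\xi)-\Phi(\eta)|\ge c\,r\,r^{N}$; combined with $\rho_a^\alpha\le r^{-k}r^N$ this gives the right inequality. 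Finally I would extend $\rho_a$ and the estimates to the boundary and absorb all the exponential factors into a single constant $C$.

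The step I expect to be the main obstacle is the clean identification $|\xi\wedge\eta|=N+O(1)$: one must guarantee that the horizontal level $l$ of the geodesic cannot drift far below the last common-component level $N$, and this is exactly where hyperbolicity -- the uniform bound $k$ furnished by simpleness through Proposition \ref{th3.4} -- is indispensable, for without a uniform control of horizontal parts the exponent in \eqref{eq3.2} would be unbounded. A secondary point needing care is that the two bounds are read off at two different (but boundedly close) levels, $N$ for the chain and $N+1$ for the disjointness, so that the argument delivers only a bi-H\"older, rather than bi-Lipschitz, comparison, which is precisely what is claimed.
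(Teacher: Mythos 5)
Your proposal is correct and follows essentially the same route as the paper's proof: the same limit/coding map $\Phi$ (your $\bigcap_n K_{\mathbf{i}|_n}$ coincides with the paper's $\lim_n S_{{\mathbf u}_n}(x_0)$), an upper bound from a bounded horizontal chain of overlapping $J$-cells whose length is controlled by simpleness via Proposition \ref{th3.4}, and a lower bound from condition (H) applied just below the last level at which the two prefixes are horizontally connected. The only cosmetic differences are that the paper reads the Gromov product directly off the canonical bilateral geodesic via Theorem \ref{th2.3}(i), where you instead prove the equivalent identification $|\xi\wedge\eta|=N+O(1)$ by hand from the separation level $N$, and that the paper invokes Lemma \ref{th2.4} on $J$-cells $k$ levels further down, whereas you apply condition (H) directly to the $K$-cells $K_{\mathbf{i}|_{N+1}},K_{\mathbf{j}|_{N+1}}$ (legitimate, since $\Phi(\xi),\Phi(\eta)$ lie in those cells and $K\subset J$ forces their disjointness); note only that this last step uses condition (H) itself rather than Lemma \ref{th2.4}.
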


\begin{proof} The proof is essentially the same as in \cite {LaWa09}
by replacing $K$ with the invariant set $J$ in ${\mathcal E}_h$. We
sketch the main idea of proof here. For any geodesic ray $\xi=
\pi[{\mathbf u}_1,{\mathbf u}_2,\ldots]$,
 we define
$$
\Phi(\xi)=\lim_{n\rightarrow \infty} S_{{\mathbf u}_n}(x_0)
$$
for some $x_0\in J$. Then the mapping is well-defined  and is a
bijection (see Lemma 4.1, Theorem 4.3 in \cite {LaWa09}).

To show that $\Phi$ satisfies (\ref{eq3.2}),  let $\xi= \pi[{\mathbf
u}_0,{\mathbf u}_1,{\mathbf u}_2,\ldots], \ \eta= \pi[{\mathbf
v}_0,{\mathbf v}_1,{\mathbf v}_2,\ldots]$ be any two non-equivalent
geodesic rays in $X$. Then there is a canonical bilateral geodesic $\gamma$ joining $\xi$ and $\eta$:
$$
\gamma=\pi[\dots,{\mathbf u}_{n+1},{\mathbf u}_n,{\mathbf t}_1,\dots,{\mathbf t}_{\ell},{\mathbf v}_n,{\mathbf v}_{n+1},\dots]
$$
with ${\mathbf u}_n,{\mathbf t}_1,\dots,{\mathbf t}_{\ell},{\mathbf
v}_n\in \Sigma^n$. It follows that
$$
|S_{{\mathbf
u}_n}(x_0)-S_{{\mathbf v}_n}(x_0)|\leq (\ell+2) r^n|J|.
$$
Since $X$
is simple, $\ell$ is uniformly bounded (by Proposition \ref {th3.4}). Note that $\Phi(\xi)\in
J_{{\mathbf u}_k}$ and $\Phi(\eta)\in J_{{\mathbf v}_k}$ for all
$k\geq 0$, hence
$$
|\Phi(\xi)-S_{{\mathbf u}_n}(x_0)|,\quad |\Phi(\eta)-S_{{\mathbf v}_n}(x_0)|
\leq r^n|J|.
$$
Therefore
$$
\begin{aligned}
 & \ |\Phi(\xi)-\Phi(\eta)|\\
 \leq& \
|\Phi(\xi)-S_{{\mathbf u}_n}(x_0)|+|S_{{\mathbf
u}_n}(x_0)-S_{{\mathbf v}_n}(x_0)|+|\Phi(\eta)-S_{{\mathbf
v}_n}(x_0)|\\
 \leq & \  C_1 r^n.
\end{aligned}
$$
Since $\gamma$  is a bilateral canonical
geodesic, we have $|\xi\wedge\eta|=n-(\ell+1)/2$ and $\ell$ is
uniformly bounded. By using $\rho_a(\xi,\eta)=\exp(-a
|\xi\wedge\eta|)$, we see that
$$
|\Phi(\xi)-\Phi(\eta)|\leq C\rho_a(\xi,\eta)^{\alpha}.
$$

On the other hand,  assume that $\xi \ne \eta$. Since $\gamma$ is a
geodesic, it follows that $({\mathbf u}_{n+1},{\mathbf
v}_{n+1})\notin {\mathcal {E}}_h$, and hence $J_{{\mathbf
u}_{n+1}}\cap J_{{\mathbf v}_{n+1}}=\emptyset$. By Lemma \ref {th2.4}, there is $k$ (independent of $n$) such that
\begin{equation*}
J_{\bf u} \cap J_{\bf v} = \emptyset \ \ \Rightarrow \ \ \hbox
{dist} (J_{\bf ui}, J_{\bf vj}) \geq c'r^n \qquad \forall \ {\bf
i},\  {\bf j} \in \Sigma^k.
\end{equation*}
Referring to  $\gamma = \pi[\dots,{\mathbf u}_{n+1},{\mathbf
u}_n,{\mathbf t}_1,\dots,{\mathbf t}_{\ell},{\mathbf v}_n,{\mathbf
v}_{n+1},\dots]$, we have $\Phi(\xi) \in J_{{\bf u}_{n+k+1}}, \
\Phi(\xi) \in J_{{\bf v}_{n+k+1}}$.  It follows that
$$
|\Phi(\xi)-\Phi(\eta)|\geq \text{dist}(J_{{\mathbf u}_{n+k+1}},
J_{{\mathbf v}_{n+k+1}})\geq c' r^{n},
$$
and $|\Phi(\xi)-\Phi(\eta)|\geq c''\rho_a(\xi,\eta)^{\alpha}$
follows by the definition of  $\rho_a$ as the above.
\end{proof}

For a simple augmented tree $X$, we label the connected classes
as $\{{\mathscr{T}}_1,\dots,{\mathscr{T}}_r\}$ and introduce an
$r\times r$ incidence matrix
\begin {equation} \label{eq3.1}
A=[a_{ij}]_{r\times r}
\end{equation}
for the connected classes. The entries $a_{ij}$ are defined as
follows. For any $1\leq i\leq r$, take a horizontal connected
component $T$ in $X$ such that $[T]={\mathscr T}_i$. Let
$Z_{i1},\dots,Z_{i\ell}$ be the horizontal connected components
consisting of offsprings generated by $T$, i.e., $T\Sigma=
\bigcup_{k=1}^{\ell}Z_{ik}$, and define
$$
a_{ij}=\#\{k:1\leq k\leq \ell, [Z_{ik}]={\mathscr T}_j\}.
$$
Observe that $a_{ij}$ is independent of the choice of the components
in the equivalence classes. It is clear that for $T, T' \in
{\mathscr C}$,  $[T]=[T']$ implies $\#T=\#T'$. But the converse is
not true. As a direct consequence of the definition, we have

\begin{Prop}\label{th3.5}
Let ${\mathbf b}=[b_1,\dots,b_r]^t$
where $b_i=\#T$ where $[T]={\mathscr T}_i$, then $A{\mathbf
b}=m{\mathbf b}$.
\end{Prop}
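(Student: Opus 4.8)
The plan is to verify the matrix identity $A\mathbf{b}=m\mathbf{b}$ coordinate by coordinate, reducing each coordinate to a single cardinality count of an offspring set. Recall that the text already observes that $[T]=[T']$ forces $\#T=\#T'$, so the quantity $b_i=\#T$ is well defined for each class $\mathscr{T}_i$. Fix an index $i$ and choose a horizontal connected component $T$ with $[T]=\mathscr{T}_i$, and let $Z_{i1},\dots,Z_{i\ell}$ be the horizontal connected components of its offsprings, so that $T\Sigma=\bigcup_{k=1}^{\ell}Z_{ik}$ is a partition of $T\Sigma$ into pairwise disjoint components.

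The key step is to evaluate $\#(T\Sigma)$ in two ways. On the one hand, grouping the offspring components by their connected class and using that cardinality is constant on each class (so $\#Z_{ik}=b_j$ whenever $[Z_{ik}]=\mathscr{T}_j$), I would obtain
$$
(A\mathbf{b})_i=\sum_{j=1}^r a_{ij}b_j=\sum_{j=1}^r \#\{k:[Z_{ik}]=\mathscr{T}_j\}\,b_j=\sum_{k=1}^{\ell}\#Z_{ik}=\#(T\Sigma),
$$
where the last equality uses the disjointness of the $Z_{ik}$. On the other hand, each vertex $\mathbf{u}\in T$ has in the symbolic space exactly the $m$ offsprings $\mathbf{u}1,\dots,\mathbf{u}m$, these $m$ words are distinct, and offsprings of distinct vertices of $T$ are distinct (since $\mathbf{u}i=\mathbf{u}'i'$ forces $\mathbf{u}=\mathbf{u}'$ and $i=i'$). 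Hence $\#(T\Sigma)=m\,\#T=mb_i$. Combining the two evaluations yields $(A\mathbf{b})_i=mb_i$ for every $i$, which is exactly the assertion.

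The argument is essentially a clean double count, so I do not expect a serious obstacle; the only points requiring care are bookkeeping ones. First, one must invoke that equivalent components have equal cardinality so that the substitution $\#Z_{ik}=b_j$ is legitimate — this is the one place where the equivalence relation $\sim$, and not merely the decomposition into components, actually enters. Second, one should confirm that the $Z_{ik}$ genuinely partition $T\Sigma$ without overlap, which is immediate from the definition of a horizontal connected component as a maximal connected horizontal subgraph within a level. With these two remarks in place the identity follows.
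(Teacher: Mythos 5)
Your proof is correct and matches the paper's intended reasoning: the paper states Proposition \ref{th3.5} as a direct consequence of the definition of the incidence matrix, with no written proof, and your double count of $\#(T\Sigma)$ — once via the partition $T\Sigma=\bigcup_{k=1}^{\ell}Z_{ik}$ grouped by class, once via the $m$ offsprings of each vertex of $T$ — is precisely the argument being taken for granted. Your two bookkeeping remarks (that $[Z_{ik}]=\mathscr{T}_j$ licenses $\#Z_{ik}=b_j$, and that the $Z_{ik}$ partition $T\Sigma$, the latter resting on the paper's observation (\ref{eq3.0}) that offspring sets of distinct components are not connected to each other) are exactly the right points to make explicit.
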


The following theorem is for Lipschitz equivalence on the hyperbolic
boundaries,  it is the crucial step to establish the equivalence for
the self-similar sets.

\begin{theorem}\label{th3.6}
Suppose the augmented tree $(X,{\mathcal E})$ is
simple, and suppose the corresponding incidence
matrix $A$  is $(m, {\bf b})$-rearrangeable (where the $m$ and $\bf b$ are defined as in Proposition \ref {th3.5}).  Then there is a near-isometry between $ (X,{\mathcal E})$ and  $(X, {\mathcal E}_v)$, so that $\partial (X,{\mathcal E}) \simeq\partial (X, {\mathcal E}_v)$.
\end{theorem}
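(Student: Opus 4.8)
The plan is to deduce the Lipschitz equivalence of the two boundaries from Proposition \ref{th3.3} by exhibiting a level-preserving near-isometry $\sigma$ of the common vertex set $X=\bigcup_n\Sigma^n$ that carries the augmented graph $(X,\mathcal{E})$ onto the pure tree $(X,\mathcal{E}_v)$. Both graphs are hyperbolic — the former by Proposition \ref{th3.4}, the latter because it is a tree — so once such a $\sigma$ is produced and the same metric parameter $a$ is used on both sides, Proposition \ref{th3.3} yields $\partial(X,\mathcal{E})\simeq\partial(X,\mathcal{E}_v)$ at once. I will insist that $\sigma$ map each level $\Sigma^n$ bijectively onto itself, so that $|\sigma(x)|=|x|$ automatically; by the Remark after the definition of near-isometry it then remains only to bound $\big||\pi(\sigma(x),\sigma(y))|-|\pi(x,y)|\big|$.

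First I would translate the near-isometry requirement into the language of Gromov products. For $x,y$ at level $n$, Theorem \ref{th2.3}(i) gives $|x\wedge y|=l-h/2$, where $l$ is the highest level at which the ancestors of $x$ and $y$ lie in a common horizontal component and $h$ is the horizontal distance there; since $X$ is simple, $h$ is bounded by a constant $k$ (Proposition \ref{th3.4} with Theorem \ref{th2.3}(ii)). In the tree the corresponding product equals the length of the common prefix of $\sigma(x)$ and $\sigma(y)$. Because $|\pi(x,y)|=2n-2|x\wedge y|$ on each level, it suffices to produce $\sigma$ for which the common-prefix level of $\sigma(x),\sigma(y)$ differs from $l$ by a bounded amount. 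In other words, $\sigma$ must convert \emph{divergence of the horizontal-component path} in $(X,\mathcal{E})$ into \emph{divergence of the symbolic address} in $(X,\mathcal{E}_v)$, up to a universal additive constant.

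To build $\sigma$ I would work down the \emph{component tree}: its nodes are the horizontal components, its root is $\{o\}$, and the children of a component $T$ of class $\mathscr{T}_i$ are the offspring components $Z_{i1},\dots,Z_{i\ell}$, whose class counts are exactly the entries $a_{ij}$. Every vertex of $X$ is recovered as a pair (component, vertex inside it), and all $b_i=\#T$ vertices of a single component share the same component path. The construction proceeds inductively, maintaining the invariant that the image $\sigma(T)$ of a component is a cluster of $b_i$ tree-vertices of bounded height sitting beneath a single ancestor at level $|T|-O(1)$. At the inductive step the $b_i$ vertices of $\sigma(T)$ carry $m b_i$ child-slots, and by Proposition \ref{th3.5} the offspring components of $T$ account for exactly $m b_i$ vertices; the hypothesis that $A$ is $(m,\mathbf{b})$-rearrangeable is precisely what lets me distribute these offspring components over the available child-slots so that each image $\sigma(Z_{ik})$ is again a bounded-height cluster and so that the local assignments glue into a global level-preserving bijection of $X$.

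The main obstacle is exactly this rearrangement step, and it is the reason for deferring the detailed bookkeeping to the techniques of Section 4: when a component is larger than $m$ its image cannot sit beneath one parent, so one must spread it across several tree-vertices while keeping the cluster height — and hence the eventual additive error — bounded independently of the level, and one must do so \emph{coherently} so that the union of all the local assignments is a genuine bijection. Granting the rearrangement, the near-isometry estimate is then routine: if $x,y$ first separate at the component $T$ at level $l$, their images separate inside the bounded-height cluster $\sigma(T)$, so their common-prefix level lies in $[\,l-O(1),\,l\,]$, whence $\big||\sigma(x)\wedge\sigma(y)|-|x\wedge y|\big|$ is bounded by a constant built from $k$ and the maximal cluster height. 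This uniform bound is the required near-isometry, and Proposition \ref{th3.3} finishes the proof.
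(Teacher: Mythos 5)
Your road map is the paper's: reduce via Proposition \ref{th3.3} to producing a level-preserving bijection $\sigma:(X,{\mathcal E})\to(X,{\mathcal E}_v)$, build $\sigma$ inductively on horizontal components, use the $(m,{\mathbf b})$-rearrangeability of the row ${\mathbf a}_i$ to apportion the offspring components of a component $T$ with $[T]={\mathscr T}_i$ into $b_i$ groups of total cardinality exactly $m$ (possible since ${\mathbf a}_i{\mathbf b}=mb_i$ by Proposition \ref{th3.5}), assign each group to the $m$ children of one vertex of $\sigma(T)$, and then convert the bounds $h\le k$ and $|l'-l|=O(1)$ from Theorem \ref{th2.3} into the near-isometry estimate. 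The problem is that you stop exactly at the decisive step: you call the rearrangement ``the main obstacle,'' defer it, and proceed ``granting the rearrangement.'' That deferred step is the actual content of the proof, and under the stated hypothesis it is short: Definition \ref{def4.1} yields a $b_i\times r$ matrix $C=[c_{sj}]$ with ${\mathbf a}_i={\mathbf 1}C$ and $C{\mathbf b}=[m,\dots,m]^t$, whose $s$-th row prescribes which offspring components ($c_{sj}$ of class ${\mathscr T}_j$) go beneath the $s$-th vertex of $\sigma(T)$; the resulting partition $\{1,\dots,\ell\}=\bigcup_{s=1}^{b_i}\Lambda_s$ with $\sum_{k\in\Lambda_s}\#Z_k=m$ makes the local assignments trivially glue into a bijection, because the children of distinct tree vertices are disjoint and each parent receives exactly its $m$ slots' worth.

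Moreover, the obstacle you single out --- a component of size larger than $m$ whose image ``cannot sit beneath one parent,'' forcing you to weaken the invariant to clusters of bounded height spread over several parents --- does not arise under the hypothesis, and your fallback for it is never substantiated. By Remark (2) after Definition \ref{def4.1}, rearrangeability of row $i$ forces $b_j\le m$ whenever $a_{ij}\ne 0$; since every class other than $[\{o\}]$ occurs among the offspring components of some class, one gets $\max_j b_j\le m$ automatically, so every group fits exactly beneath a single parent and the strict invariant of the paper --- all of $\sigma(Z)$ shares one parent, as in (\ref{eq4.2}) --- can be maintained at every level, giving $|l'-l|\le 1$ and the bound $k+2$ on the geodesic-length discrepancy immediately. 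When $\max_i b_i\le m$ genuinely fails (the situation relevant to Theorem \ref{th3.6'}), the paper's remedy is not your cluster-spreading scheme but a change of tree: pass to the $k$-th iterate of the IFS, whose augmented tree has incidence matrix $A^k$, which is $(m^k,{\mathbf b})$-rearrangeable by Proposition \ref{th4.4}, and whose hyperbolic boundary is identical. Your variable-height-cluster construction would require a new coherence argument (that bounded height and bijectivity can be maintained jointly forever), which you acknowledge but do not supply; so the proposal has the right architecture yet a genuine gap at its center.
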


The notion of {\it rearrangeable matrix} is an important tool to
construct the near-isometry. Since the concept is a little complicated, we will introduce this in more detail, and prove
Theorem \ref{th3.6} together with the following theorem in the next section.

\begin {theorem} \label {th3.6'} If the incidence matrix $A$ is primitive, then  $A^k$ is $(m^k, {\bf b})$-rearrangeable for some $k>0$.  Consequently \ $\partial (X,{\mathcal E}) \simeq\partial (X, {\mathcal E}_v)$.
\end {theorem}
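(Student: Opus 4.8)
The plan is to prove the two assertions separately, treating the rearrangeability of $A^k$ as the substantive point and the Lipschitz equivalence as a structural consequence obtained by passing to an iterated system. For the first assertion I would work directly with the combinatorial definition of $(m,{\bf b})$-rearrangeability from Section 4, and exploit the fact that primitivity means $A^k=[a^{(k)}_{ij}]$ has \emph{all} entries strictly positive once $k$ is large. The crucial interpretation is that $A^k$ is the incidence matrix recording $k$-step branching among the connected classes: a class-$i$ component $T$ has, among the horizontal components formed by its $k$-th generation offspring, exactly $a^{(k)}_{ij}$ of class $j$, and these total $m^k b_i=\sum_j a^{(k)}_{ij}b_j$ vertices by iterating Proposition \ref{th3.5}. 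Positivity of every $a^{(k)}_{ij}$ says that the $k$-step descendants of any block already realize every class with positive multiplicity; this abundance (the Perron–Frobenius-type mixing guaranteed by primitivity) is precisely the slack needed to perform the rearrangement, namely to pack the offspring components of prescribed sizes $b_j$ into the descendant slots beneath a single size-$b_i$ cluster of the vertical tree in a level-uniform way.

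For the consequence I would \emph{not} apply Theorem \ref{th3.6} to $(X,{\mathcal E})$ itself — we only control $A^k$, not $A$ — but to the coarsened augmented tree $(X',{\mathcal E}')$ associated with the $k$-fold iterated IFS $\{S_{\bf u}:{\bf u}\in\Sigma^k\}$. Its alphabet is $\Sigma^k$, its branching number is $m^k$, and its horizontal edges at coarse level $n$ are exactly the horizontal edges of $(X,{\mathcal E})$ at fine level $kn$, since both are defined by the intersection condition (\ref{eq2.5}); hence the horizontal components, their cardinality vector ${\bf b}$, and (using the fine classes as a class set for $X'$) the connected classes all match, and the coarse incidence matrix is $A^k$ by composing the one-step branching $k$ times. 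Since $A^k$ is $(m^k,{\bf b})$-rearrangeable, Theorem \ref{th3.6} produces a near-isometry between $(X',{\mathcal E}')$ and $(X',{\mathcal E}'_v)$, giving $\partial(X',{\mathcal E}')\simeq\partial(X',{\mathcal E}'_v)$.

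It then remains to identify the coarse boundaries with the fine ones. A geodesic ray of $X'$ is just the subsequence along multiples of $k$ of a geodesic ray of $X$, so the two sets of boundary points coincide. Writing $|\xi\wedge\eta|=l-h/2$ as in Theorem \ref{th2.3}(i), the corresponding coarse Gromov product equals $\lfloor l/k\rfloor - h/2$ up to a bounded error (the horizontal length $h$ is unchanged because the components agree), whence $\big||\xi\wedge\eta|' - |\xi\wedge\eta|/k\big|\le C$. Choosing the coarse parameter $a'=ka$ then makes the identity map bi-Lipschitz between $(\partial X,\rho_a)$ and $(\partial X',\rho_{a'})$ for both the full edge set and the vertical edge set. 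Chaining the equivalences yields $\partial(X,{\mathcal E})\simeq\partial(X,{\mathcal E}_v)$.

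The hard part will be the first paragraph: converting the raw positivity $a^{(k)}_{ij}>0$ into the exact rearrangeability condition of Section 4. The delicate points are, first, showing that the packing of variable-size offspring blocks beneath a size-$b_i$ cluster can be carried out with height bounded uniformly in the level, so that the near-isometry constant $c$ stays uniform; and second, ensuring that the packing chosen for one class is compatible with the packings chosen for its descendant classes, so that the level-by-level rearrangements glue into a single bijection of $X'$. Primitivity enters exactly to remove the obstruction to the first point: were some class absent from the $k$-step descendants of another, certain blocks could never be placed and no uniform rearrangement could exist, whereas $A^k>0$ supplies every class in positive quantity and thereby the freedom to complete the assignment.
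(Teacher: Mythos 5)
Your second and third paragraphs are sound and essentially the paper's own route: the reduction to the $k$-fold iterated IFS, whose augmented tree has incidence matrix $A^k$ and branching number $m^k$, is exactly how the paper deduces the boundary statement (Corollary \ref{cor4.6}, via the reduction already carried out at the start of the proof of Theorem \ref{th3.6}), and your identification of $(\partial X',\rho_{ka})$ with $(\partial X,\rho_a)$ is in fact more careful than the paper, which simply asserts the boundaries are identical. The genuine gap is in your first paragraph, which is the actual content of the theorem and which you yourself flag as ``the hard part'' without supplying it. Worse, the mechanism you propose --- that strict positivity of all entries $a^{(k)}_{ij}$ is ``precisely the slack needed'' --- is not correct as a general implication. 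Rearrangeability (Definition \ref{def4.1}) is a sharp integer-packing condition, and primitivity plus the eigen-relation $A{\bf b}=m{\bf b}$ alone do not yield it: take $A=[3]$, ${\bf b}=[2]$, $m=3$. Then $A$ is primitive and $A{\bf b}=m{\bf b}$, but $(m^k,{\bf b})$-rearrangeability of $A^k=[3^k]$ would require splitting $3^k$ balls of weight $2$ into groups each of total weight $3^k$, which is impossible for every $k$ by parity. So ``every class appears with positive multiplicity'' cannot be the engine of the proof; the parity obstruction must be killed by something else.

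What kills it in the paper is the tree-specific fact, never invoked in your outline, that the root class ${\mathscr T}_1=[\{o\}]$ is a singleton, so $b_1=1$: there is a class of weight-one balls with which any deficit can be filled exactly. The paper's proof then runs: choose $k$ so large that $m^k>\ell:=\max_j b_j$ \emph{and} every entry of $A^k$ is at least $\ell^2$ (one needs entries $\geq\ell^2$, not merely $>0$, to guarantee at least $b_i\ell$ singletons per row, since row $i$ satisfies ${\bf a}_i{\bf b}=m^kb_i$ with $p=b_i\leq\ell$); then apply the Xi--Xiong combinatorial lemma (Lemma \ref{th4.5}, packaged as Lemma \ref{th4.6}): greedily extract maximal sub-collections of the non-singleton blocks with weight sum $<m^k$, observe each such sum is within $\ell$ of $m^k$ by maximality, and top up with singletons. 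This is Proposition \ref{th4.7}, and it is the step your proposal is missing. Finally, your closing worries about level-uniformity of the packing and compatibility across descendant classes are misplaced at this stage: rearrangeability is a purely arithmetic property of $(A^k,m^k,{\bf b})$, applied independently in each horizontal component, and the gluing of the level-by-level rearrangements into a single near-isometric bijection is precisely what the inductive construction of $\sigma$ in the proof of Theorem \ref{th3.6} already provides, so it can simply be cited rather than redone.
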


As a direct consequence, we have

\begin{Cor}\label {th3.7} Under the assumption on Theorem \ref {th3.6} (or Theorem \ref{th3.6'}),  then  $(\partial (X, {\mathcal E}), \rho_a)$  is totally disconnected.
\end{Cor}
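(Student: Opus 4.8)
The plan is to deduce the corollary entirely from the Lipschitz equivalence already furnished by Theorem~\ref{th3.6} (or Theorem~\ref{th3.6'}), by transporting the topological property of total disconnectedness across the constructed bi-Lipschitz map. Since a bi-Lipschitz bijection is in particular a homeomorphism, and total disconnectedness is a topological invariant, it suffices to show that the target space $(\partial(X,{\mathcal E}_v),\rho_a)$ is totally disconnected. Thus the whole statement reduces to understanding the boundary of the pure tree $(X,{\mathcal E}_v)$.

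First I would examine the metric on $\partial(X,{\mathcal E}_v)$. Because ${\mathcal E}_v$ contains no horizontal edges, every geodesic in $(X,{\mathcal E}_v)$ is vertical, so the horizontal part appearing in Theorem~\ref{th2.3}(i) has length $h=0$; consequently the Gromov product $|x\wedge y|$ equals the level of the deepest common ancestor of $x$ and $y$. Passing to the boundary, $\rho_a(\xi,\eta)=\exp(-a|\xi\wedge\eta|)$, where $|\xi\wedge\eta|$ is the length of the longest common prefix of the two symbolic rays. The elementary tree inequality $|\xi\wedge\zeta|\ge\min\{|\xi\wedge\eta|,|\eta\wedge\zeta|\}$ then upgrades the quasi-ultrametric bound recorded in Section~2 (with its correction factor $1+a'$) to a genuine ultrametric inequality
$$
\rho_a(\xi,\zeta)\le\max\{\rho_a(\xi,\eta),\rho_a(\eta,\zeta)\}\qquad\forall\ \xi,\eta,\zeta\in\partial(X,{\mathcal E}_v).
$$

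Finally I would invoke the standard fact that any ultrametric space is totally disconnected: in an ultrametric every closed ball is also open (each of its points may serve as centre), so the clopen balls of arbitrarily small radius separate any two distinct boundary points, forcing every connected component to be a single point. Combining this with the reduction in the first paragraph gives the corollary. The only point requiring care is the collapse of the correction factor $1+a'$ to obtain a true ultrametric; this is precisely where the absence of horizontal edges in ${\mathcal E}_v$ is used, and it is the sole place any structure enters beyond the already-established equivalence $\partial(X,{\mathcal E})\simeq\partial(X,{\mathcal E}_v)$. Everything else is the transfer of a purely topological property across that equivalence.
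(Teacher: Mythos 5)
Your proposal is correct and is exactly the argument the paper intends: the corollary is stated as a direct consequence of Theorem \ref{th3.6}, the implicit reasoning being that $\partial(X,{\mathcal E}_v)$ carries the natural symbolic ultrametric (as noted in Section 2, where $\rho_a$ coincides with $\varrho$ when ${\mathcal E}_h=\emptyset$), hence is totally disconnected, and a bi-Lipschitz map is a homeomorphism preserving that property. Your fleshing-out of the ultrametric inequality via $h=0$ in Theorem \ref{th2.3}(i) is a faithful expansion of the same route, not a different one.
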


By Theorem \ref {th3.6} we obtain the following Lipschitz equivalence on the self-similar sets.

\begin{theorem} \label{th3.9} Let $K$ and $K'$ be self-similar sets that are generated by two IFS's as in (\ref {eq2.2}) with the same number of similitudes and the same contraction ratio, and satisfy condition (H) in (\ref{eq2.4}). Assume the associated augmented trees are simple and the incidence matrices are  $(m, {\bf b})$-rearrangeable (in particular, primitive).
Then  $K$ and $K'$ are  Lipschitz equivalent, and are also Lipschitz equivalent to a dust-like self-similar set.
\end{theorem}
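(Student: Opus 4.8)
The plan is to route both $K$ and $K'$ through the pure (vertical) tree $(X,{\mathcal E}_v)$, whose boundary is manifestly a dust-like self-similar set, and then to exploit that the two vertical trees literally coincide because the two IFS's share the same number $m$ of maps and the same ratio $r$. First I would fix a single parameter $a>0$ small enough that both augmented trees $(X,{\mathcal E})$ and $(X',{\mathcal E}')$ are hyperbolic for the ultra-metric $\rho_a$ of \eqref{eq2.0} (the trees $(X,{\mathcal E}_v)$, $(X',{\mathcal E}'_v)$ being trivially hyperbolic); this is possible since the constraint $e^{\delta a}-1<\sqrt2-1$ only forces $a$ to be small. With this common $a$ the exponent $\alpha=-\log r/a$ is the same for both systems.

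Next I would assemble a bi-Lipschitz chain for $K$. By Proposition~\ref{th3.8} (applicable since the IFS satisfies (H) and the tree is simple) the natural map $\Phi$ is a bijection $\partial(X,{\mathcal E})\to K$ with $C^{-1}|\Phi(\xi)-\Phi(\eta)|\le \rho_a(\xi,\eta)^{\alpha}\le C|\Phi(\xi)-\Phi(\eta)|$, i.e. $\Phi$ is bi-Lipschitz from $(\partial(X,{\mathcal E}),\rho_a^{\alpha})$ onto $K$ with its Euclidean metric. By Theorem~\ref{th3.6} (or Theorem~\ref{th3.6'} under the primitivity hypothesis) there is a near-isometry between $(X,{\mathcal E})$ and $(X,{\mathcal E}_v)$, so by Proposition~\ref{th3.3} there is a bijection $\sigma:\partial(X,{\mathcal E})\to\partial(X,{\mathcal E}_v)$ that is bi-Lipschitz for $\rho_a$; raising a bi-Lipschitz estimate to the fixed power $\alpha$ preserves it (with constants raised to the power $\alpha$), so $\sigma$ is also bi-Lipschitz for $\rho_a^{\alpha}$. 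Finally, on the pure tree ${\mathcal E}_h=\emptyset$, so as noted in the excerpt $\rho_a$ is the natural symbolic metric and $\rho_a^{\alpha}(\xi,\eta)=\exp(-a\alpha|\xi\wedge\eta|)=r^{|\xi\wedge\eta|}$, which is exactly the coding metric of a strongly separated IFS of $m$ maps with ratio $r$; hence $(\partial(X,{\mathcal E}_v),\rho_a^{\alpha})$ is bi-Lipschitz to a dust-like self-similar set $D$. Composing, $K\simeq D$, which already yields the last assertion of the theorem.

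The same argument applied to $K'$ gives $K'\simeq(\partial(X',{\mathcal E}'_v),\rho_a^{\alpha})$ through a bijection $\sigma'$ and the map $\Phi'$. The decisive observation is that, because both IFS's have the same $m$ and (with the common $a$) carry the same metric, the two vertical trees are the same rooted $m$-ary tree with the same $\rho_a$; thus $\partial(X,{\mathcal E}_v)=\partial(X',{\mathcal E}'_v)$ as metric spaces under $\rho_a^{\alpha}$. Composing the chain for $K$, this identification, and the reverse chain for $K'$,
$$
K\ \xrightarrow{\ \Phi^{-1}\ }\ \partial(X,{\mathcal E})\ \xrightarrow{\ \sigma\ }\ \partial(X,{\mathcal E}_v)=\partial(X',{\mathcal E}'_v)\ \xrightarrow{\ (\sigma')^{-1}\ }\ \partial(X',{\mathcal E}')\ \xrightarrow{\ \Phi'\ }\ K',
$$
yields a bijection $K\to K'$ that is bi-Lipschitz, since each arrow is bi-Lipschitz for the appropriate ($\rho_a^{\alpha}$ or Euclidean) metric. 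Hence $K\simeq K'$.

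I expect the genuine content to lie entirely in the inputs rather than in this assembly: the near-isometry of Theorem~\ref{th3.6}, built by the rearrangement technique of Section~4, and the H\"older equivalence $\Phi$ of Proposition~\ref{th3.8}, are where the work is. Within the present argument the only points demanding care are the bookkeeping of the two metrics $\rho_a$ and $\rho_a^{\alpha}$ (checking that bi-Lipschitz passes between them) and the choice of a single parameter $a$ valid for both systems, so that $\alpha$, and therefore the common dust-like model, is shared by $K$ and $K'$.
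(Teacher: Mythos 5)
Your proposal is correct and follows essentially the same route as the paper: the paper likewise chains $\partial(X,{\mathcal E})\simeq\partial(X,{\mathcal E}_v)=\partial(Y,{\mathcal E}_v)\simeq\partial(Y,{\mathcal E})$ via Theorem \ref{th3.6} and Proposition \ref{th3.3}, then composes with the H\"older equivalences $\Phi_1,\Phi_2$ of Proposition \ref{th3.8} to get the bi-Lipschitz map $\tau=\Phi_2\circ\varphi\circ\Phi_1^{-1}$, and obtains the dust-like statement by regarding $(X,{\mathcal E}_v)$ as the augmented tree of a strongly separated IFS. Your extra care about fixing a single parameter $a$ (hence a common $\alpha$) and your explicit identification of $\rho_a^{\alpha}=r^{|\xi\wedge\eta|}$ on the vertical tree are just more detailed versions of steps the paper leaves implicit.
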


\begin{proof}
It follows from Theorem \ref{th3.6} that
\begin{equation} \label{eq3.3}
\partial(X, {\mathcal E}) \simeq \partial(X, {\mathcal E}_v) = \partial( Y,
{\mathcal E}_v )\simeq \partial(Y, {\mathcal E})
\end{equation}
(for the respective metrics $\rho_a$). Let $\varphi:
\partial(X, {\mathcal E})  \to \partial(Y, {\mathcal E})$
be the bi-Lipschitz map. With no confusion, we just denote these two
boundaries by $\partial X$, $\partial Y$ as before.

By Proposition \ref{th3.8}, there exist two bijections
$\Phi_1:\partial X\to K$ and $\Phi_2:\partial Y\to K'$ satisfying
(\ref{eq3.2}) with constants $C_1,C_2$, respectively. Define $\tau:
K \to K'$ as
$$
\tau = \Phi_2\circ \varphi\circ \Phi_1^{-1}.
$$
Then
$$
\begin {aligned}
|\tau(x) -\tau (y)| & \leq
C_2\ \rho_a(\varphi\circ\Phi_1^{-1}(x),\varphi\circ\Phi_1^{-1}(y))^\alpha\\
&\leq C_2C_0^\alpha\ \rho_a(\Phi_1^{-1}(x),\Phi_1^{-1}(y))^\alpha\\
&\leq
C_2C_0^\alpha C_1\ |x-y|.
\end{aligned}
$$
Let $C' = C_2C_0^\alpha C_1$, then
$$
|\tau (x) - \tau (y)| \leq C'|x -y|.
$$
Similarly, we have ${C'}^{-1} |x-y| \leq |\tau (x) - \tau (y)|$.
Therefore $\tau : K \to K'$ is a bi-Lipschitz map.

For the last statement, we can regard $(X, {\mathcal E}_v)$ as the augmented tree of an IFS that is strongly separated, and then apply the above conclusion.
\end{proof}

\begin{Cor}\label{cor3.10}
The IFS in  Theorem \ref{th3.9} satisfies the OSC.
\end{Cor}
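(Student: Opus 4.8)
The plan is to read off the OSC from the conclusion of Theorem~\ref{th3.9} by combining the bi-Lipschitz invariance of Hausdorff measure with Schief's characterization of the open set condition. Write $s=\log m/(-\log r)$ for the similarity dimension, which both IFS's share since each has $m$ maps of ratio $r$. Since $K=\bigcup_{{\bf u}\in\Sigma^n}K_{\bf u}$ is covered by $m^n$ pieces of diameter $r^n|K|$ and $mr^s=1$, one always has ${\mathcal H}^s(K)\leq |K|^s<\infty$; the content of the corollary is therefore the positivity of ${\mathcal H}^s(K)$.

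First I would compute the critical measure of the dust-like model. Theorem~\ref{th3.9} yields a dust-like self-similar set $D$ with $K\simeq D$, and by the construction in its proof $D$ is the self-similar set of the strongly separated IFS attached to $(X,{\mathcal E}_v)$, again with $m$ maps of ratio $r$. For a strongly separated system the Hutchinson--Moran computation gives $0<{\mathcal H}^s(D)<\infty$. Next I would transfer this across the bi-Lipschitz bijection $\tau$ of Theorem~\ref{th3.9}: from the scaling estimate $C^{-s}{\mathcal H}^s(E)\leq{\mathcal H}^s(\tau(E))\leq C^s{\mathcal H}^s(E)$ valid for a bi-Lipschitz map with constant $C$, the positivity passes to $K$, so ${\mathcal H}^s(K)>0$.

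Finally I would invoke Schief's theorem \cite{Sc94}: for a self-similar set in ${\Bbb R}^d$ generated by similitudes, ${\mathcal H}^s(K)>0$ is equivalent to the OSC. Hence the IFS generating $K$ satisfies the OSC, and the identical argument applied to $K'$ finishes the proof. The point to handle with care---rather than a genuine obstacle---is to track ${\mathcal H}^s(K)>0$ throughout instead of the weaker equality $\dim_H K=s$: it is the positivity of the critical-dimensional measure that is bi-Lipschitz invariant and that feeds directly into Schief's criterion, whereas equality of Hausdorff dimensions alone would not close the argument.
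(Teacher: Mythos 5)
Your proof is correct and follows essentially the same route as the paper: Theorem \ref{th3.9} gives a bi-Lipschitz equivalence between $K$ and a dust-like set with $0<{\mathcal H}^s<\infty$, the positivity and finiteness of ${\mathcal H}^s$ transfer under bi-Lipschitz maps, and Schief's criterion \cite{Sc94} then yields the OSC. Your added remarks---that finiteness of ${\mathcal H}^s(K)$ is automatic so only positivity is at stake, and that dimension equality alone would not suffice---are accurate refinements of the same argument rather than a different approach.
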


\begin{proof} We  make use of the following well-known result of Schief \cite{Sc94} on a self-similar set $K$: let $s$ be the similarity dimension of $K$, then the IFS satisfies the OSC if and only if $0<{\mathcal H}^s(K) <\infty$.

Let $K$ be the self-similar set as in Theorem \ref{th3.9}, then it
is Lipschitz equivalent to a dust-like set $K''$.  It follows that
$0<{\mathcal H}^s(K'') <\infty$, so is $K$ by the Lipschitz
equivalence. Hence by Schief's criterion, the IFS for $K$ satisfies
the OSC.
\end{proof}

In Section 5, we will provide some interesting examples for the
Lipschitz equivalence of the totally disconnected self-similar sets in Theorem \ref {th3.9}.  We also remark that in Theorem \ref{th3.9} the condition on the augmented tree can be weaken, and the  proof still yields a very useful result.

\begin{Prop}\label{th3.10}
Let $K$ and $K'$ be self-similar sets that are generated by two
IFS's as in (\ref {eq2.2}) that  have the same number of
similitudes, same contraction ratio, and satisfy condition (H) in
(\ref{eq2.4}). Suppose the two IFS's satisfy  either (i) the OSC, or
(ii) the augmented trees are simple. Then
\begin{equation} \label{eq3.5}
K \simeq K'\ \Leftrightarrow \ \partial X  \simeq \partial Y.
\end{equation}
\end{Prop}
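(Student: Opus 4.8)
The plan is to exploit the H\"older equivalence between each hyperbolic boundary and its self-similar set, and to observe that the two snowflake exponents coincide. Under hypothesis (ii), Proposition \ref{th3.8} provides bijections $\Phi_1:\partial X\to K$ and $\Phi_2:\partial Y\to K'$ satisfying (\ref{eq3.2}) with constants $C_1,C_2$ and exponents $\alpha=-\log r/a$ and $\alpha'=-\log r'/a$ respectively; under hypothesis (i) the identical H\"older estimates are furnished by the result of \cite{LaWa09} recalled in Section 2. The crucial observation is that, because the two IFS's share the same contraction ratio $r=r'$ and we equip both $\partial X$ and $\partial Y$ with the metric $\rho_a$ for a common parameter $a$, we have $\alpha=\alpha'$. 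Thus $\Phi_1$ and $\Phi_2$ both convert $\rho_a$ into the Euclidean metric after raising to one and the same power $\alpha$.

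For the direction $\partial X\simeq\partial Y\Rightarrow K\simeq K'$, I would argue exactly as in the proof of Theorem \ref{th3.9}: given a bi-Lipschitz map $\varphi:\partial X\to\partial Y$, set $\tau=\Phi_2\circ\varphi\circ\Phi_1^{-1}:K\to K'$ and chain the three estimates, namely the H\"older bound for $\Phi_2$, the Lipschitz bound for $\varphi$ raised to the power $\alpha$, and the H\"older bound for $\Phi_1^{-1}$, to obtain ${C'}^{-1}|x-y|\le|\tau(x)-\tau(y)|\le C'|x-y|$.

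The genuinely new content is the converse $K\simeq K'\Rightarrow\partial X\simeq\partial Y$, proved symmetrically. Given a bi-Lipschitz map $\tau:K\to K'$, define $\varphi=\Phi_2^{-1}\circ\tau\circ\Phi_1:\partial X\to\partial Y$. Combining (\ref{eq3.2}) for $\Phi_2$, the Lipschitz property of $\tau$, and (\ref{eq3.2}) for $\Phi_1$ yields a two-sided comparison
\[
c^{-1}\,\rho_a(\xi,\eta)^{\alpha}\le\rho_a(\varphi(\xi),\varphi(\eta))^{\alpha}\le c\,\rho_a(\xi,\eta)^{\alpha}\qquad\forall\,\xi,\eta\in\partial X.
\]
Taking $\alpha$-th roots (permissible since $\alpha>0$) turns this into $c^{-1/\alpha}\rho_a(\xi,\eta)\le\rho_a(\varphi(\xi),\varphi(\eta))\le c^{1/\alpha}\rho_a(\xi,\eta)$, so $\varphi$ is bi-Lipschitz and $\partial X\simeq\partial Y$.

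The one point requiring care, which I would flag as the main obstacle, is the matching of the two H\"older exponents. Were $\alpha\neq\alpha'$, the composed maps would be merely H\"older rather than bi-Lipschitz and the equivalence (\ref{eq3.5}) would break down; it is precisely the hypothesis of equal contraction ratios that forces $\alpha=\alpha'$ and legitimizes the $\alpha$-th root step. Everything else reduces to the routine composition of maps that are bi-Lipschitz up to one fixed snowflake power.
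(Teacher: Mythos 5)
Your proposal is correct and follows essentially the same route as the paper: the sufficiency is the argument of Theorem \ref{th3.9} with the hypothesis $\partial X\simeq\partial Y$ in place of (\ref{eq3.3}), and the necessity uses exactly the paper's map $\varphi=\Phi_2^{-1}\circ\tau\circ\Phi_1$ with the H\"older equivalence (\ref{eq3.2}), which holds in case (i) by the result of \cite{LaWa09} and in case (ii) by Proposition \ref{th3.8}. Your explicit flagging that the equal contraction ratios force $\alpha=\alpha'$, which legitimizes taking $\alpha$-th roots, is a correct and worthwhile elaboration of the estimation the paper leaves implicit.
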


\begin{proof} The sufficiency of (\ref {eq3.5}) is always satisfied, as we can replace (\ref{eq3.3}) by the given condition $\partial X  \simeq \partial Y$, then follows from the same proof of Theorem \ref {th3.9}. The necessity follows by making use of the H\"older equivalence (\ref {eq3.2}) which is satisfied for cases (i) and (ii), and proceeds  with a similar estimation for $\varphi = \Phi_2^{-1} \circ \tau \circ \Phi_1$.
\end{proof}

We remark that the above theory of Lipschitz equivalence can also be
applied to study the self-affine systems. Let $B$ be a $d\times d$ expanding matrix (i.e., all the eigenvalues have moduli $>1$)
and let  $\{S_i\}_{i=1}^m$ with $S_i(x) = B^{-1} (x+d_i), d_i \in {\Bbb R}^d$ be the IFS. For the part of simple augmented tree, it is
clear that the notion can be defined and the hyperbolicity  in Proposition \ref{th3.4} follows by the same way. Moreover we have

\begin {Prop} \label {th3.11} For the IFS $\{S_i\}_{i=1}^m$ of self-affine maps as the above, Theorems \ref{th3.6} and  \ref{th3.6'} remain valid.
\end {Prop}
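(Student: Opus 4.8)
The plan is to observe that the entire apparatus behind Theorems \ref{th3.6} and \ref{th3.6'} is combinatorial: the augmented tree $(X,{\mathcal E})$, the horizontal components, the equivalence of components, the simplicity condition, the incidence matrix $A$, the notion of $(m,{\bf b})$-rearrangeability, and the near-isometry are all invariants of the labelled graph $(X,{\mathcal E}_v\cup{\mathcal E}_h)$ together with the integer $m$ and the vector ${\bf b}$. None of these refer to the Euclidean metric on ${\Bbb R}^d$ or to the conformal structure of the maps $S_i$; the only geometric input is the rule deciding which cells $J_{\bf u}, J_{\bf v}$ meet, i.e. the definition of ${\mathcal E}_h$ in (\ref{eq2.5}). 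Consequently the arguments proving the two theorems should transfer verbatim once the graph $(X,{\mathcal E})$ is confirmed to be a hyperbolic augmented tree in the self-affine setting.

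First I would check that ${\mathcal E}_h$ in (\ref{eq2.5}) still satisfies the axiom in Definition \ref{de2.2}. For any bounded closed invariant set $J$ we have $J_{{\bf u}i}=S_{\bf u}(S_i(J))\subset S_{\bf u}(J)=J_{\bf u}$, so descending to children shrinks cells exactly as in the self-similar case. Hence $J_{\bf u}\cap J_{\bf v}\neq\emptyset$ forces $J_{{\bf u}^{-1}}\cap J_{{\bf v}^{-1}}\supset J_{\bf u}\cap J_{\bf v}\neq\emptyset$, which is precisely the inheritance of horizontal edges by ancestors required in Definition \ref{de2.2}. Thus $(X,{\mathcal E})$ is an augmented tree, and Theorem \ref{th2.3} (the canonical-geodesic formula for $|x\wedge y|$ and the characterization of hyperbolicity by bounded horizontal geodesics) applies without change. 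Since the simplicity hypothesis bounds the size of every horizontal component, Proposition \ref{th3.4} gives hyperbolicity exactly as before, so the ultra-metric $\rho_a$ and the boundary $\partial(X,{\mathcal E})$ are well-defined.

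With the graph confirmed hyperbolic, the proof of Theorem \ref{th3.6} proceeds identically: the rearrangement of edges in Section 4 produces a bijection $\sigma:X\to X$ whose construction uses only the combinatorial data (the incidence matrix $A$, its rearrangeability, and the tree levels), and the resulting bound $\big||\pi(\sigma(x),\sigma(y))|-|\pi(x,y)|\big|\le c$ is a statement purely about graph distances. Theorem \ref{th3.6'} is a matrix-theoretic assertion---that a primitive $A$ admits a power $A^k$ that is $(m^k,{\bf b})$-rearrangeable---and carries no geometric content at all. Finally, Proposition \ref{th3.3}, which converts a near-isometry between two hyperbolic augmented trees sharing the parameter $a$ into a bi-Lipschitz map of their boundaries, relies only on Theorem \ref{th2.3}(i) and the definition of $\rho_a$; applying it to $\sigma$ yields $\partial(X,{\mathcal E})\simeq\partial(X,{\mathcal E}_v)$.

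The work here is not in producing new estimates but in verifying that no step of the self-similar proof secretly exploits equal contraction ratios or the orthogonality of the $R_i$. The one point that genuinely warrants attention is the well-definedness of the hyperbolic boundary in the affine case, since $\rho_a$ is built from the Gromov product; this is settled by the hyperbolicity established above, after which Proposition \ref{th3.3} closes the argument. I therefore expect the main obstacle to be expository---an honest audit of the Section 4 construction to confirm its reliance on graph-combinatorial data alone---rather than a new mathematical difficulty.
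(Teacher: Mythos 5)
Your proposal is correct and follows essentially the same route as the paper, which proves Proposition \ref{th3.11} precisely by remarking that the notions of simple augmented tree, incidence matrix, and rearrangeability are purely combinatorial, that hyperbolicity follows from Proposition \ref{th3.4} exactly as before, and that the near-isometry construction of Section 4 and the matrix argument for Theorem \ref{th3.6'} never use equal contraction ratios or orthogonality (the genuinely geometric adjustments, via the ultra-metric $w$, are deferred to Theorem \ref{th3.12}, just as you anticipate). Your explicit verification of the edge-inheritance axiom of Definition \ref{de2.2} from the nesting $J_{{\bf u}i}\subset J_{\bf u}$ is a detail the paper leaves implicit, but it is the same argument.
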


For the part involves the self-affine set on ${\Bbb R}^d$, we need
to use a device in \cite{HeLa08} by replacing the Euclidean norm
with an ``utlta-norm" adapted to the matrix $B$. By renorming, we
can assume without loss of generality that $||x|| \leq ||Bx||$. For
$0<\delta<1/2$, let $\varphi \geq 0 $ be a $C^\infty$ function
supported in the open ball $U_\delta$ centering at $0$ with $\varphi (x)
=\varphi (-x)$ and $\int_{{\Bbb R}^d} \varphi =1$. Let $V=
BU_1\setminus U_1$, and let $h = \chi_V\ast \varphi$ be the
convolution of the indicator function $\chi_V$ and  $\varphi$. Let
$q = |\det (B)|$ and define
$$
w(x) = \sum_{n=-\infty}^\infty q^{-n/q}h(B^nx)\qquad x \in {\Bbb
R}^d.
$$ Then $w(x)$ satisfies  (i) $w(x) = w(-x)$ and $w(x) =0$ if and
only if $x=0$, (ii) $w(Bx) = q^{1/d}w(x)$, and (iii) there exists
$\beta>1$ such that $w(x+y) \leq \beta \max \{w(x), w(y)\}$. This
$w$ is used as a distance (ultra-metric) to replace the Euclidean
distance to define the generalized Hausdorff measure ${\mathcal
H}_w^{\alpha}$,  Hausdorff dimension $\dim_H^w$, box dimension
$\dim_B^w$. Under this setting, most of the basic properties for the
self-similar sets (including Schief's basic result on OSC) can be
carried to the self-affine sets \cite {HeLa08}. To apply to here, we
need to adjust condition (H) (\ref{eq2.4}) to
\begin{equation*}
K_{\mathbf u}\cap K_{\mathbf v} = \emptyset \quad \Rightarrow \quad
\text{dist}_w(K_{\mathbf u},K_{\mathbf v})\geq cq^{-n/d}.
\end{equation*}
and  to replace the $r^n$ in the proofs of Lemma \ref{th2.4} and
Proposition \ref{th3.8} by $q^{-n/d}$. Then we have

\begin{theorem} \label {th3.12} With $K$ and $K'$ self-affine sets satisfying the conditions in Theorem \ref{th3.9}. The $K$ and $K'$ are Lipschitz equivalent under the ultra-metric defined by $w$, and they are Lipschitz equivalent to a dust-like self-affine set.
\end{theorem}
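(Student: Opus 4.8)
The plan is to transcribe the proof of Theorem \ref{th3.9} word for word, replacing the Euclidean distance everywhere by the ultra-metric induced by $w$ and the contraction ratio $r$ by $q^{-1/d}$. The decisive observation is that all of the tree-combinatorial input is untouched by this change. By Proposition \ref{th3.11}, Theorems \ref{th3.6} and \ref{th3.6'} remain valid for the self-affine augmented trees, so exactly as in (\ref{eq3.3}) we obtain
$$
\partial(X, {\mathcal E}) \simeq \partial(X, {\mathcal E}_v) = \partial(Y, {\mathcal E}_v) \simeq \partial(Y, {\mathcal E}),
$$
all with respect to the hyperbolic metric $\rho_a$; the middle identity is forced by the two IFS's having the same number of maps, so that their vertical trees coincide. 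This yields a bi-Lipschitz map $\varphi: \partial X \to \partial Y$ for $\rho_a$, just as before.

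The one genuinely new ingredient is the self-affine analogue of Proposition \ref{th3.8}: a bijection $\Phi: \partial X \to K$ obeying
$$
C^{-1} w(\Phi(\xi)-\Phi(\eta)) \leq \rho_a(\xi,\eta)^{\alpha} \leq C\, w(\Phi(\xi)-\Phi(\eta)),
$$
now with $\alpha = (\log q)/(da)$. I would set $\Phi(\xi)=\lim_n S_{{\mathbf u}_n}(x_0)$ as before and rerun the proof of Proposition \ref{th3.8} using two facts about $w$. First, property (ii) gives $w(S_{\mathbf u}(x)-S_{\mathbf u}(y)) = q^{-n/d} w(x-y)$ for ${\mathbf u}\in\Sigma^n$, so every level-$n$ cell $J_{\mathbf u}$ has $w$-diameter exactly $q^{-n/d}\,\text{diam}_w(J)$; this uniform contraction is precisely what $w$ was constructed to supply, and it takes over the role of $r^n$. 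Second, the adjusted condition (H) together with the self-affine version of Lemma \ref{th2.4} (same proof, with $r^n$ replaced by $q^{-n/d}$) gives the lower bound $\text{dist}_w(J_{{\mathbf u}{\mathbf i}}, J_{{\mathbf v}{\mathbf j}}) \geq c' q^{-n/d}$ whenever $J_{\mathbf u}\cap J_{\mathbf v}=\emptyset$. With $\Phi$ in hand, define $\tau = \Phi_2\circ\varphi\circ\Phi_1^{-1}: K \to K'$ and verify it is bi-Lipschitz for $w$ via the identical three-term chain of inequalities used in Theorem \ref{th3.9}.

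The main obstacle is the upper bound in this adapted Proposition \ref{th3.8}, where one must control $w(S_{{\mathbf u}_n}(x_0)-S_{{\mathbf v}_n}(x_0))$ across the horizontal part ${\mathbf u}_n={\mathbf t}_0,{\mathbf t}_1,\dots,{\mathbf t}_{\ell+1}={\mathbf v}_n$ of the canonical geodesic. Because $w$ satisfies only the strong triangle inequality $w(x+y)\leq\beta\max\{w(x),w(y)\}$, I cannot sum the $\ell+2$ consecutive differences as in the Euclidean proof; instead I would iterate the $\beta$-inequality along the chain, bounding each step by $\beta\, q^{-n/d}\,\text{diam}_w(J)$ (using that consecutive cells intersect) and accumulating a factor $\beta^{\ell+1}$. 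The crucial point is that simplicity of the augmented tree makes $\ell$ uniformly bounded (Proposition \ref{th3.4}), so $\beta^{\ell+1}$ is an absolute constant that is absorbed into $C$, and the ultra-metric estimate then closes exactly as in the self-similar case. The final dust-like assertion follows as in Theorem \ref{th3.9}, by regarding $(X,{\mathcal E}_v)$ as the augmented tree of a strongly separated self-affine IFS with the same matrix $B$, whence $K\simeq K'$ and both are Lipschitz equivalent to a dust-like self-affine set.
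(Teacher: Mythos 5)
Your proposal is correct and follows essentially the same route as the paper, which proves Theorem \ref{th3.12} precisely by rerunning Theorem \ref{th3.9} via Proposition \ref{th3.11}, the $w$-adapted condition (H), and the substitution of $q^{-n/d}$ for $r^n$ in Lemma \ref{th2.4} and Proposition \ref{th3.8}. Your one genuinely careful addition---replacing the Euclidean summation along the horizontal chain by an iterated $\beta$-max estimate, with the factor $\beta^{\ell+1}$ absorbed into the constant because simplicity bounds $\ell$ uniformly---is exactly the detail the paper leaves implicit in its instruction to transcribe the proofs, and you handle it correctly.
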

\end{section}

\bigskip

\begin{section}{\bf Rearrangeable matrix and proofs of the main theorems}

The proof of the Lipschitz equivalence of the simple augmented
tree to the original tree in Theorem \ref{th3.6} is to construct a near-isometry between them, which is based on a device of
``rearrangement" of graphs.  The idea of rearrangement  was introduced by Deng and He \cite{DeHe10}. A similar technique of ``equal decomposition" was also used to consider the Lipschitz equivalence in
\cite{RaRuXi06} (see also \cite{XiXi10}, \cite{XiXi11}).  First we give a detail discussion of the concept of rearrangement.

\begin{Def}\label{def4.1}
Given $m, r\in {\mathbb N}$. Suppose \ ${\mathbf
a}=[a_1,\dots,a_r]\in{\mathbb Z}_+^r$ is a row vector,  and
${\mathbf b}=[b_1,\dots,b_r]^t\in{\mathbb N}^r$ is a column vector.  We
say  that ${\mathbf a}$ is $(m,{\mathbf b})$-rearrangeable if there
exists an integer $p>0$, and a nonnegative integral matrix $C=[c_{ij}]_{p\times r}$ such that
$$
{\mathbf a} = [1, \dots , 1] C   \quad \hbox {and } \quad  C{\mathbf b} = [m, \dots,
m]^t.
$$
(Note that in this case ${\mathbf a}{\mathbf b}=pm $ for some $p\in{\mathbb N}$.)

 A matrix $A$ is called $(m,{\mathbf b})$-rearrangeable if  each row of $A$ is $(m,{\mathbf b})$-rearrangeable.
\end{Def}

\noindent {\bf Remarks.} (1)  The intuitive explanation of the
definition is as follows. Let $a_i$ be the number of balls with the
same weight $b_i$. That ${\bf a}$ is $(m, {\bf b})$-rearrangeable
means we can rearrange these balls into $p$ groups (the $p$-rows in
$C$) such that in each group the number of balls with weight $b_j$
is $c_{ij}$ and  the total weight is exactly $m$.  It it is clear
that the total weight of all balls is $pm = {\bf a}{\bf b}= {\bf 1}
C {\bf b}$.

\medskip

(2) It follows easily from the definition that if \ ${\bf a}$ \ is
$(m, {\bf b})$-rearrangeable, then \ $\max \{b_i: a_ib_i \not =0\}
\leq m$.

\medskip

(3) If we write an $r\times r$ matrix $A$ as  ${\tiny
\left[\begin{array}{c}
          {\mathbf a}_1  \\
          \vdots \\
          {\mathbf a}_r
    \end{array} \right]}$.
Then $A$ is $(m,{\mathbf b})$-rearrangeable is equivalent to the
existence of ${\mathbf p}=[p_1,\dots,p_r]^t\in{\mathbb N}^r$ such
that $A{\mathbf b}=m{\mathbf p}$ and a sequence of nonnegative integral matrices
$\{C_i\}_{i=1}^r$ such that ${\mathbf a}_i= {\mathbf 1}C_i$  and
$C_i{\mathbf b}=[m,\dots,m]^t\in{\mathbb N}^{p_i}$ for all $i$.

We will prove some sufficient conditions for $(m, {\bf b})$-rearrangeable in the sequel (Lemma \ref{th4.6}, Proposition \ref {th4.7}). In the following we use two examples to illustrate more on such notion.

\begin{Exa} Let ${\mathbf a} \in {\mathbb Z}_+^r$, and let ${\mathbf b} = {\mathbf 1}^t$. Suppose $\sum_ia_i =m$, then trivially,
$\mathbf a$ is $(m, {\mathbf b})$-rearrangeable (with $p=1$,  $C=
{\mathbf a}$, i.e.,  intuitively we put everything in one group).
Consequently, for any nonnegative integral matrix $A$ with $m$ as an
eigenvalue and \ ${\mathbf b}= {\mathbf 1}^t$ as the eigenvector, it
is $(m, {\mathbf b})$-rearrangeable.
\end{Exa}

\begin{Exa}\label{example of rearrangeable.}
Let
$$
A=\left[\begin{array}{rrr}
          1 &  1 & 0  \\
          1 &  1 &  1 \\
          1 & 1 &  2
    \end{array} \right]
$$
 and ${\mathbf b} = [1,2,3]^t$. Then $A {\mathbf b} = 3 {\mathbf b}$, and  $A$ is $(3,{\mathbf  b})$-rearrangeable, so is $A^2$.
\end{Exa}

\begin{proof}
To show that $A$ is $(3,{\mathbf  b})$-rearrangeable, it suffices to
check on each row of  $A$ is $(3,{\mathbf  b})$-rearrangeable:

for ${\mathbf a}=[1,1,0]$,  then ${\mathbf
a}{\mathbf b}=3$, we take $C={\mathbf a}$;

for ${\mathbf a}=[1,1,1]$,  then ${\mathbf
ab}=2\times 3$. we take $C={\tiny \left[\begin{array}{rrr}
          1 & 1 &0  \\
          0 & 0 &1 \\
    \end{array} \right]}$;

for ${\mathbf a}=[1,1,2]$,  then ${\mathbf
ab}=3\times 3$, we take $C={\tiny \left[\begin{array}{rrr}
          1 & 1 & 0 \\
          0 & 0 & 1\\
          0 & 0 & 1
    \end{array} \right]}$.

For $A^2={\tiny \left[\begin{array}{rrr}
          2 &  2 & 1  \\
          3 &  3 &  3 \\
          4 & 4 &  5
    \end{array} \right]}
$,  we can proceed in the same way to show that $A^2$ is also
$(3,{\mathbf b})$-rearrangeable. The corresponding matrices $C$
for the three rows of $A^2$ are the transposes of the following
matrices:  ${\tiny \left[\begin{array}{rrr}
          1 & 1 & 0 \\
          1 & 1 & 0\\
          0 & 0 & 1
    \end{array} \right],\quad
\left[\begin{array}{cccccc}
          1 & 1 & 0 & 1 & 0 & 0 \\
          1 & 1 & 0 & 1 & 0 & 0 \\
          0 & 0 & 1 & 0 & 1 & 1
    \end{array} \right]}$  and
${\tiny \left[\begin{array}{ccccccccc}
          1 & 1 & 0 & 1 & 0 & 0 & 1 & 0 & 0 \\
          1 & 1 & 0 & 1 & 0 & 0 & 1 & 0 & 0 \\
          0 & 0 & 1 & 0 & 1 & 1 & 0 & 1 & 1
    \end{array} \right]}$.
\end{proof}

As a matter of fact, the above example is typical by the following
proposition.

\begin{Prop} \label{th4.4}
Let ${\mathbf  b}=[b_1,\dots,b_r]^t\in{\mathbb N}^r$. If a matrix
$A=[a_{ij}]_{r\times r}$ is $(m,{\mathbf b})$-rearrangeable, then
$A^n$ is $(m,{\mathbf b})$-rearrangeable for  $n\geq
1$.

If in addition, the eigen-relation $A{\mathbf b}=m{\mathbf b}$
is satisfied, then $A^{n}$ is also $(m^{n},{\mathbf
b})$-rearrangeable.
\end{Prop}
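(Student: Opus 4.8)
The plan is to prove both statements by exploiting the ``stacking'' operation on witness matrices together with the elementary counting identity that the number of groups is forced by the total weight. For any $(w,\mathbf{b})$-rearrangeable row $\mathbf{c}$ with witness $C$ (so $\mathbf{1}C=\mathbf{c}$ and $C\mathbf{b}=[w,\dots,w]^t$), I first record that the number $p$ of rows of $C$ is uniquely determined: left-multiplying $C\mathbf{b}=[w,\dots,w]^t$ by $\mathbf{1}$ gives $pw=\mathbf{1}C\mathbf{b}=\mathbf{c}\mathbf{b}$, hence $p=\mathbf{c}\mathbf{b}/w$ (this is the identity already noted after Definition \ref{def4.1}). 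This simple observation is exactly what will couple the eigen-relation to the group count in the second part.

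Next I would isolate a combination lemma: if $\mathbf{c}_1,\dots,\mathbf{c}_s$ are each $(w,\mathbf{b})$-rearrangeable and $n_1,\dots,n_s\in\mathbb{Z}_+$, then $\sum_t n_t\mathbf{c}_t$ is $(w,\mathbf{b})$-rearrangeable. The proof is to take a witness $C_t$ for each $\mathbf{c}_t$ and form the vertical concatenation $C$ consisting of $n_t$ copies of each $C_t$; then $\mathbf{1}C=\sum_t n_t\mathbf{1}C_t=\sum_t n_t\mathbf{c}_t$, while every entry of $C\mathbf{b}$ equals $w$. For the first statement I then write row $i$ of $A^n$ as $\sum_k (A^{n-1})_{ik}\,\mathbf{a}_k$, where $\mathbf{a}_k$ denotes row $k$ of $A$; since $A^{n-1}$ is a nonnegative integral matrix and each $\mathbf{a}_k$ is $(m,\mathbf{b})$-rearrangeable by hypothesis, the combination lemma shows that every row of $A^n$ is $(m,\mathbf{b})$-rearrangeable, which is the claim. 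Note the eigen-relation is not used here, since no regrouping is performed and each block keeps weight $m$.

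For the second statement I would argue by induction on $n$, the case $n=1$ being the hypothesis. Assuming $A^{n-1}$ is $(m^{n-1},\mathbf{b})$-rearrangeable, let $\mathbf{d}_k$ be its $k$-th row, with witness $D_k$. From $A\mathbf{b}=m\mathbf{b}$ one gets $A^{n-1}\mathbf{b}=m^{n-1}\mathbf{b}$, so $\mathbf{d}_k\mathbf{b}=m^{n-1}b_k$ and, by the counting identity above, $D_k$ has exactly $b_k$ rows, each of weight $m^{n-1}$ (writing weight of a row $\mathbf{c}$ for $\mathbf{c}\mathbf{b}$). Stacking $a_{ik}$ copies of $D_k$ over all $k$ produces a nonnegative integral matrix $\widetilde{D}$ whose column sums equal row $i$ of $A^n=\sum_k a_{ik}\mathbf{d}_k$ (the combination lemma again), all of whose rows have weight $m^{n-1}$, and whose total number of rows is $\sum_k a_{ik}b_k=(A\mathbf{b})_i=mb_i$. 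The final step is to \emph{merge}: partition these $mb_i$ rows into $b_i$ blocks of $m$ rows each and replace each block by the sum of its rows; the resulting matrix $C$ has $b_i$ rows, each of weight $m\cdot m^{n-1}=m^n$, and still has column sums equal to row $i$ of $A^n$. Thus row $i$ of $A^n$ is $(m^n,\mathbf{b})$-rearrangeable, completing the induction.

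The step I expect to carry the real content is the merging at the end, and the point to watch is precisely that it requires the number $mb_i$ of weight-$m^{n-1}$ blocks to be divisible by $m$. This divisibility is supplied exactly by the eigen-relation through the identity $\sum_k a_{ik}b_k=(A\mathbf{b})_i=mb_i$; without $A\mathbf{b}=m\mathbf{b}$ the total count $\mathbf{a}_i\mathbf{b}$ need not be a multiple of $m$, and the weight-$m^n$ grouping could fail. Everything else is bookkeeping with the stacking construction, so no further obstacle is anticipated.
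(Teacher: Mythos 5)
Your proposal is correct and follows essentially the paper's own proof: both arguments construct a witness for each row of $A^n$ by vertically stacking copies of witnesses for lower powers (your combination lemma is precisely the paper's stacked matrix $C^{(i)}$), and both obtain the $(m^n,\mathbf{b})$ statement by merging equal-weight rows of the stacked witness into blocks, with the eigen-relation $A\mathbf{b}=m\mathbf{b}$ supplying the exact row count and divisibility, just as you flag. The only cosmetic difference is the factorization: you induct on $n$ via $A^n=A\cdot A^{n-1}$ and merge blocks of $m$ rows of weight $m^{n-1}$, while the paper uses $A^n=A^{n-1}\cdot A$, first produces a weight-$m$ witness with $m^{n-1}b_i$ rows, and then merges consecutive blocks of $m^{n-1}$ rows.
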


\begin{proof} We  use induction to prove the first part.
Let ${\mathbf a}_i, \ 1\leq i\leq r$ be the row
vectors of $A$. Since $A$ is $(m,{\mathbf  b})$-rearrangeable,
there exist ${\mathbf p}=[p_1,\dots,p_r]^t\in{\mathbb N}^r$ such
that $A{\mathbf b}=m{\mathbf p}$ and a sequence of  matrices
$\{C_i\}_{i=1}^r$ such that ${\mathbf a}_i= {\mathbf 1}C_i$ and
$C_i{\mathbf b}=[m,\dots,m]^t\in{\mathbb N}^{p_i}$ for all $i$.

  Assume that $A^{n-1}$ is $(m, {\bf b})$-rearrangeable, then $A^{n-1} {\bf b} = m \widetilde {\bf p}$ for some positive integer vector $\widetilde{\bf p}$.
 Consider $A^n$, let ${\boldsymbol\alpha}_i$ be the $i$-th row of $A^n$. Since
$A^n{\mathbf b}=m A \widetilde{\mathbf p}$, we have ${\boldsymbol
\alpha}_i{\mathbf b}=m{\mathbf a}_i\widetilde{\mathbf p}: = mq_i$.
Write $A^{n-1} = [\widetilde a_{ij}]$,  it follows that
$$
{\boldsymbol \alpha}_i =\sum_{j=1}^r\widetilde a_{ij}{\mathbf a}_j =
\sum_{j=1}^r\widetilde a_{ij}{\mathbf 1}C_j .
$$
Let
$$
C^{(i)}=\big[\underbrace{C_1,\dots,C_1}_{\widetilde a_{i1}},\dots,
\underbrace{C_r,\dots,C_r}_{\widetilde a_{ir}}\big]^t
$$
where the transpose means transposing the row of matrices into a
column of matrices (without transposing the $C_j$ itself). Then
$$
{\boldsymbol\alpha}_i =  {\mathbf 1}C^{(i)} \quad \hbox {and} \quad
C^{(i)}{\mathbf b}=[m,\dots,m]^t\in {\mathbb
N}^{q_i}.
$$
Hence ${\boldsymbol \alpha}_i$ is $(m,{\mathbf b})$-rearrangeable, and  $A^n$ is $(m,{\mathbf b})$-rearrangeable.

For the second part, since $A{\mathbf b}=m{\mathbf b}$ and
$A^n{\mathbf b}=m^n{\mathbf b}$, we can replace the previous
integral vector ${\mathbf p}$ by ${\mathbf b}$. Then ${\boldsymbol
\alpha}_i{\mathbf b}=mq_i= m^np_i$. We then replace the $q_i \times
r $ matrix $C^{(i)}$ in the above by the $p_i\times r$ matrix
$D^{(i)}$ which is obtained by summing every consecutive $m^{n-1}$
row vectors of $C^{(i)}$ (note that $q_i=m^{n-1}p_i$). Hence
$$
{\boldsymbol \alpha}_i=  {\mathbf 1}D^{(i)} \quad \hbox {and}\quad
D^{(i)}{\mathbf b}=[m^n,\dots,m^n]^t\in {\mathbb N}^{p_i}.
$$
so that
${\boldsymbol \alpha}_i$ is $(m^n,{\mathbf b})$-rearrangeable.  Consequently, $A^n$ is $(m^n,{\mathbf b})$-rearrangeable.
\end{proof}

\noindent{\bf Proof of Theorem \ref {th3.6}}\quad  Recall that  $A{\bf b} = m{\bf b}$ where ${\mathbf b}=[b_1,\dots,b_r]^t$
with $b_i=\#T$ where $[T]={\mathscr T}_i$ (Proposition \ref {th3.5}). First we claim that we can assume without loss of generality that $\max_i b_i \leq m$. For otherwise, let  $k$ be sufficiently large such that $\max_i
b_i\leq m^{k}$,  by Proposition \ref{th4.4}, $A^{k}$ is
$(m^{k}, {\mathbf b})$-rearrangeable. The  IFS of the
$k$-th iteration of $\{S_i\}_{i=1}^m$  has symbolic
space  $X'={\bigcup}_{n=0}^{\infty}\Sigma^{k n}$ and the
augmented  tree has incidence matrix $A^{k}$; moreover  the two hyperbolic boundaries
$\partial X'$ and $\partial X$ are identical. Hence we can consider  $A^{k}$ instead if $\max_i b_i \leq m$ is not satisfied.

Let $X_1=(X,{\mathcal E}),~ X_2=(X,{\mathcal E}_v)$. In view of
Proposition \ref{th3.3},   it suffices to show that there exists a
near-isometry $\sigma$ between $X_1$ and $X_2$, and hence
$\partial(X, {\mathcal E} )\simeq
\partial(X, {\mathcal E}_v )$.  We define this $\sigma$ to be a
one-to-one mapping  from $\Sigma^n$ (in $X_1$) to $\Sigma^n$ (in
$X_2$) inductively as follows: Let
$$
\sigma(o)=o \quad \hbox {and} \quad \sigma(i)=i, ~i\in \Sigma.
$$
Suppose  $\sigma$ is defined on the level $n$ such that for
every horizontal connected component $T$, $\sigma (T)$ has the same
parent (see Figure 1), i.e.,
\begin{equation} \label {eq4.2}
\sigma (x)^{-1} = \sigma (y)^{-1} \qquad  \forall \ x, y \in T
\subset \Sigma^n
\end{equation}
To define the map $\sigma$ on  $\Sigma^{n+1}$, we note that  $T$ in
$\Sigma^n$ gives rise  to horizontal connected components in
$\Sigma^{n+1}$, which are accounted by the incidence matrix $A$. We can
write
$$
T\Sigma= {\bigcup}_{k=1}^\ell Z_k.
$$
where $Z_k$ are horizontal connected components consisting of
offsprings of $T$. If $T$ belongs to the connected class  ${\mathscr
T}_i$, then $\#T=b_i$.  By the definition of the incidence matrix
$A$ and $A{\bf b} = m{\bf b}$, we have
$$
b_im\ =\ {\sum}_{k=1}^\ell\#{Z_k}\ = \ {\sum}_{j=1}^ra_{ij}b_j.
$$
Since $A$ is $(m,{\mathbf  b})$-rearrangeable, for the ${\mathbf
a}_i$, there exists a nonnegative integral matrix $
C=[c_{sj}]_{b_i\times r}$ (depends on $i$) such that
$$
{\mathbf a}_i = {\mathbf 1} C \quad \hbox {and} \quad C{\mathbf  b}
=[m,\dots,m]^t.
 $$
We decompose ${\mathbf a}_i$ into $b_i$ groups according to $C$ as
follows. Note that $a_{ij}$ denotes the number of $Z_k$ that belongs
to ${\mathscr T}_j$. For each $1\leq s \leq b_i$ and  $1\leq j\leq
r$, we choose  $c_{sj}$ of those $Z_k$, and denote by $\Lambda_s$
the set for all the chosen $k$ with $1\leq j\leq r$. Then we can
write the index set $\{1,2,\dots,\ell\}$ as a disjoint union:
$$
\{1,2,\dots,\ell\}={\bigcup}_{s=1}^{b_i}{\Lambda_s}.
$$
Hence $\bigcup_{k=1}^{\ell}Z_{k}$ can be rearranged as $b_i$ groups
so that the total size of every group is equal to $m$, namely,
\begin{equation} \label {eq4.3}
{\bigcup}_{k=1}^{\ell}Z_k={\bigcup}_{k\in \Lambda_1}Z_{k}\ \cup \
\cdots \cup \ {\bigcup}_{k\in \Lambda_{b_i}}Z_{k}.
\end{equation}
Note that each set on the right has $m$ elements.
\begin{figure}[h]
  \centering
  \includegraphics[width=12cm]{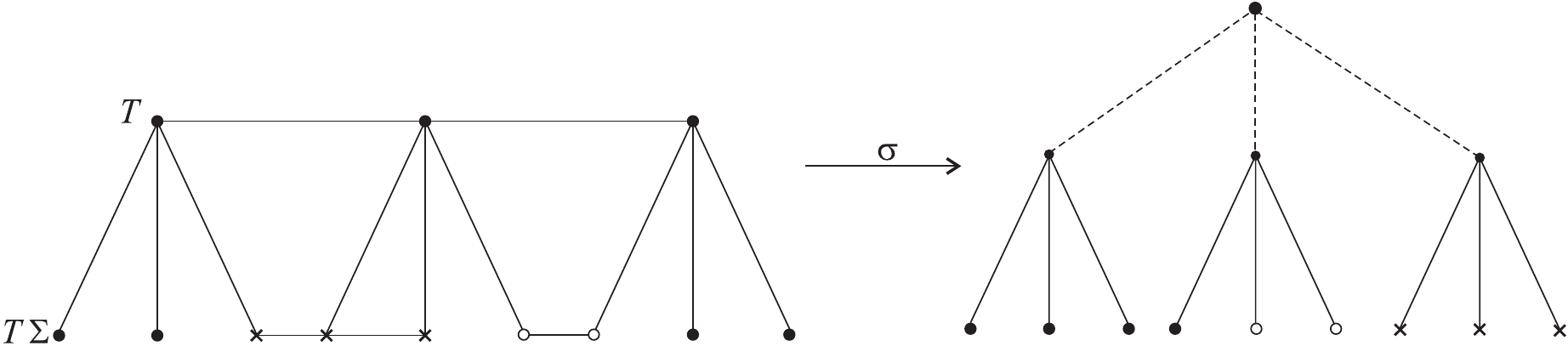}
\caption{ An illustration of the rearrangement by $\sigma$, the
{\tiny $\bullet , \ \circ$} and {\tiny $\times$} on the left denote
the types of connected components}\label{fig.re}
\end{figure}

For the  connected component  $T = \{{\mathbf i}_1,  \dots ,
{\mathbf i}_{b_i}\} \subset \Sigma^n$, we have defined $\sigma$ on
$\Sigma^n$ and $ \sigma (T) = \{{\mathbf j}_1 = \sigma ({\mathbf
i}_1), \ \dots ,\ {\mathbf j}_{b_i} = \sigma ({\mathbf i}_{b_i})\}.
$ In view of (\ref {eq4.3}), we define $\sigma$ on $T\Sigma =
{\bigcup}_{k=1}^{\ell}Z_k$ by assigning each ${\bigcup}_{k\in
\Lambda_s}Z_{k}$ (it has $m$ elements) the $m$ descendants of
${\mathbf j}_s$ (see Figure \ref{fig.re}). It is clear that $\sigma$
is well-defined on $T\Sigma$ and satisfies (\ref{eq4.2}) for $x,y
\in T\Sigma$. We apply the same construction of $\sigma$ on the
offsprings of every horizontal connected component in $\Sigma^n$. It
follows that $\sigma$ is well-defined  and satisfies (\ref{eq4.2})
on $\Sigma^{n+1}$. Inductively, $\sigma$ can be defined from $X_1$
to $X_2$ and  is bijective.

Finally we show that  $\sigma$ is indeed a near-isometry and
complete the proof.  Since $\sigma: X_1 \to X_2$ preserves the
levels,  without loss of generality, it suffices to prove the
near-isometry for ${\mathbf x,y}$ belong to the same level. Let
$\pi({\mathbf x,y})$  be the canonical geodesic connecting them,
which can be written as
$$
\pi({\mathbf x,y})=[{\mathbf x}, {\mathbf u}_1,\dots, {\mathbf u}_n,
{\mathbf t}_1,\dots, {\mathbf t}_k, {\mathbf v}_n,\dots, {\mathbf
v}_1, {\mathbf y}]
$$
where $[{\mathbf t}_1,\dots, {\mathbf t}_k]$ is the horizontal part
and $[{\mathbf x}, {\mathbf u}_1,\dots, {\mathbf u}_n, {\mathbf
t}_1],~ [{\mathbf t}_k, {\mathbf v}_n,\dots, {\mathbf v}_1, {\mathbf
y}]$ are vertical parts. Clearly, $\{{\mathbf t}_1,\dots, {\mathbf
t}_k\}$ must be included in one horizontal connected component of
$X_1$, we denote it by $T_j$. With the notation as in Theorem
\ref{th2.3}(i), it follows  that for ${\mathbf x}\ne {\mathbf y}\in
X_1$,
$$
|\pi({\mathbf x,y})| =|{\mathbf x}|+|{\mathbf y}|-2l+h,  \qquad |\pi
(\sigma({\mathbf x}),\sigma({\mathbf y}))|=|\sigma({\mathbf
x})|+|\sigma({\mathbf y})|-2l^{\prime}+h^{\prime}.
$$
We have $$\big ||\pi (\sigma({\mathbf x}),\sigma({\mathbf
y}))|-|\pi({\mathbf x,y})|\big |\leq |h-h'|+2|l'-l|\leq k+2|l'-l|$$
where $k$ is a hyperbolic constant as in Theorem \ref{th2.3}(ii). If
$T_j$ is a singleton, then
$$|l'-l|=0.$$
If  $T_j$ contains more than one point, then the elements of
$\sigma(T_j)$ share the same parent. Then the confluence of
$\sigma({\mathbf x})$ and $\sigma({\mathbf y})$ (as a tree) is
$\sigma ({\mathbf x)}^{-1} \ ( = \sigma ({\mathbf y)}^{-1})$. Hence
$$|l'-l|=1.$$
Consequently
$$
\big ||\pi(\sigma({\mathbf x}),\sigma({\mathbf y}))|-|\pi({\mathbf
x, y})|\big |\leq k+2.
$$
This completes the proof that $\sigma$ is a near-isometry and the
theorem is established.  \hfill  $\Box$

\begin{Cor} \label{cor4.6}
Under the same assumption of Theorem \ref{th3.6}. If there exists
$k\in {\mathbb N}$ such that $A^k$ is $(m^k,{\mathbf
b})$-rearrangeable, then  $\partial (X,{\mathcal E})\simeq \partial (X, {\mathcal E}_v)$.
\end{Cor}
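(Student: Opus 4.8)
The plan is to reduce the corollary to Theorem~\ref{th3.6} by passing to the $k$-th iterate of the underlying IFS, which is precisely the device already employed inside the proof of Theorem~\ref{th3.6}. I would consider the iterated system $\{S_{\mathbf u}\}_{\mathbf u\in\Sigma^k}$; it consists of $m^k$ similitudes of ratio $r^k$ and has symbolic space $X'=\bigcup_{n\geq 0}\Sigma^{kn}$, which I identify with the sub-collection of those levels of $X$ whose index is divisible by $k$. Let $(X',{\mathcal E}')$ be its augmented tree and ${\mathcal E}'_v$ its vertical edges. Since the horizontal edges are determined by the same invariant set $J$, the horizontal components of $(X',{\mathcal E}')$ lying in $\Sigma^{kn}$ are exactly those of $(X,{\mathcal E})$ in that level; hence simplicity of $X$ forces $X'$ to be simple and the class-size vector $\mathbf b$ is unchanged. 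The incidence matrix of $(X',{\mathcal E}')$ is $A^k$: by induction on $k$, using that offspring of distinct components on a common level never meet (equation~\eqref{eq3.0}), the number of type-${\mathscr T}_j$ components among the $k$-level descendants $T\Sigma^k$ of a type-${\mathscr T}_i$ component equals $\sum_{\ell}(A^{k-1})_{i\ell}a_{\ell j}=(A^k)_{ij}$.

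With this setup, I would verify the hypotheses of Theorem~\ref{th3.6} for $(X',{\mathcal E}')$ with $m$ replaced by $m^k$. Applying Proposition~\ref{th3.5} to $X'$ gives $A^k\mathbf b=m^k\mathbf b$ (the $k$-th power of $A\mathbf b=m\mathbf b$), and by hypothesis $A^k$ is $(m^k,\mathbf b)$-rearrangeable. Thus $(X',{\mathcal E}')$ is a simple augmented tree whose incidence matrix is rearrangeable in the sense required by Theorem~\ref{th3.6}, so that theorem yields a near-isometry $\sigma$ between $(X',{\mathcal E}')$ and $(X',{\mathcal E}'_v)$. Concretely, $\sigma$ is built level by level in $X'$, i.e.\ $k$ original levels at a time: for a component $T$ with $[T]={\mathscr T}_i$ and $\#T=b_i$, the $(m^k,\mathbf b)$-rearrangement of the $i$-th row of $A^k$ partitions the descendant components $T\Sigma^k$ into $b_i$ groups each of total size $m^k$, which are then distributed among the $m^k$ vertical descendants of the $b_i$ vertices of $\sigma(T)$. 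By Proposition~\ref{th3.3} this near-isometry induces a bi-Lipschitz map $\partial(X',{\mathcal E}')\simeq\partial(X',{\mathcal E}'_v)$.

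It remains to descend from $X'$ to $X$. As sets of geodesic rays one has $\partial(X',{\mathcal E}')=\partial(X,{\mathcal E})$ and $\partial(X',{\mathcal E}'_v)=\partial(X,{\mathcal E}_v)$, but the two level functions differ by the factor $k$: from Theorem~\ref{th2.3}(i) and the boundedness of horizontal parts one gets $|\xi\wedge\eta|_{X}=k\,|\xi\wedge\eta|_{X'}+O(1)$, so the metric $\rho_a$ on $\partial(X,{\mathcal E})$ (resp.\ $\partial(X,{\mathcal E}_v)$) is comparable, up to a bounded multiplicative factor, to the $k$-th power of $\rho_a$ on $\partial(X',{\mathcal E}')$ (resp.\ $\partial(X',{\mathcal E}'_v)$). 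Since this same snowflaking by exponent $k$ occurs on both boundaries of $X$, it cancels under composition, and the bi-Lipschitz map of the previous paragraph transports to a bi-Lipschitz map $\partial(X,{\mathcal E})\simeq\partial(X,{\mathcal E}_v)$, as claimed. I expect the only delicate points to be exactly these two bookkeeping facts—that the iterate carries incidence matrix $A^k$ with the unchanged vector $\mathbf b$, and that the factor-$k$ reparametrization of levels cancels so that Lipschitz (and not merely snowflake) equivalence survives the descent; both are implicit in the proof of Theorem~\ref{th3.6}, and everything else is a direct transcription of that argument.
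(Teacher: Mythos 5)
Your proposal is correct and follows essentially the same route as the paper: the reduction to the $k$-th iterated IFS, whose augmented tree has symbolic space $X'=\bigcup_{n\geq 0}\Sigma^{kn}$ and incidence matrix $A^k$ with the same vector $\mathbf b$, is exactly the device already written into the paper's proof of Theorem~\ref{th3.6}, which then applies verbatim with $m$ replaced by $m^k$. Your extra bookkeeping on the descent (that $|\xi\wedge\eta|_X=k\,|\xi\wedge\eta|_{X'}+O(1)$ and the resulting snowflaking cancels since it occurs identically on both boundaries) is a careful spelling-out of what the paper compresses into the remark that $\partial X'$ and $\partial X$ are identical.
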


To  prove of  Theorem \ref{th3.6'} that primitive implies $(m, {\bf b})$-rearrangeable,   we need a
combinatorial lemma due to Xi and Xiong \cite{XiXi10}. We include their proof for completeness.

\begin{Lem} \label{th4.5}
Let $p,m$ and $\{n_i\}_{i\in \Lambda}$ be positive integers with
$\sum_{i\in \Lambda}n_i=pm$. Suppose there exists an integer $l$
with $n_i\leq l < m$ for all $i\in \Lambda$,  and $\#\{i\in \Lambda:
n_i=1\}\geq pl$. Then there is a decomposition
$\Lambda=\bigcup_{s=1}^p\Lambda_s$ satisfying $\sum_{i\in
\Lambda_s}n_i=m$ for $1\leq s\leq p$.
\end{Lem}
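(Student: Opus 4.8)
The plan is to separate the items into \emph{units} (those $i$ with $n_i=1$) and \emph{blocks} (those with $2\le n_i\le l$), and to fill the $p$ bins first with blocks and then top each bin off with units. Write $N_1=\#\{i\in\Lambda:n_i=1\}$ for the number of units and $B=\sum_{i:\,n_i\ge 2}n_i$ for the total weight carried by the blocks. Since every index lies in exactly one of these two classes, $N_1+B=\sum_{i\in\Lambda}n_i=pm$, and the hypothesis $N_1\ge pl$ immediately yields the key weight bound $B\le pm-pl=p(m-l)$.

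First I would place the blocks into the $p$ bins by a first-fit greedy rule: process the blocks in any order and put each into the lowest-indexed bin whose load would remain $\le m$ after the insertion. The main step is to verify that this never gets stuck, i.e. that every block can be placed. Suppose to the contrary that at some stage a block of size $n\le l$ fits into none of the bins. Then each bin $s$ has current load $\mathrm{load}_s$ with $\mathrm{load}_s+n>m$, hence $\mathrm{load}_s>m-n\ge m-l$. Summing over the $p$ bins shows that the total placed weight exceeds $p(m-l)$; but the placed weight never exceeds $B\le p(m-l)$, a contradiction. This averaging contradiction, powered precisely by the bound $B\le p(m-l)$ coming from $N_1\ge pl$, is the crux of the argument and the only step I expect to require care.

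Once all blocks are placed, each bin $s$ has load $\mathrm{load}_s\le m$, so its deficit $d_s=m-\mathrm{load}_s$ is a nonnegative integer and $\sum_{s=1}^p d_s=pm-B=N_1$. I would then distribute the $N_1$ units so that bin $s$ receives exactly $d_s$ of them; this exhausts the units and brings each bin to total weight exactly $m$. Letting $\Lambda_s$ be the set of indices landing in bin $s$ gives $\Lambda=\bigcup_{s=1}^p\Lambda_s$ with $\sum_{i\in\Lambda_s}n_i=m$ for every $s$, as desired. The topping-off phase is automatic since the units have size $1$ and their count matches the total deficit exactly, so the entire difficulty is concentrated in the feasibility of the block-packing step above.
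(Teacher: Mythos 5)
Your proof is correct, and it takes a genuinely different route from the paper. The paper argues by induction on $p$: it reserves a set $\Omega$ of $(p+1)l$ unit items, extracts a \emph{maximal} subset $\Delta_1$ of the remaining items with weight sum $<m$, uses maximality to show this sum is at least $m-l$, tops the bin off with at most $l$ reserved units, and then recurses, noting that at least $pl$ units survive for the inductive hypothesis. You instead make the hypothesis quantitative up front — $N_1\ge pl$ is exactly the statement $B\le p(m-l)$ for the total block weight — and then run a single non-inductive first-fit pass: if a block of size $n\le l$ fit in no bin, every load would exceed $m-l$ and the placed weight would exceed $p(m-l)\ge B$, which is impossible since the placed weight is at most $B-n$; the unit top-off is then exact because $\sum_s d_s=pm-B=N_1$. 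The two proofs share the same two-phase skeleton (pack the non-unit items to within $l$ of capacity, finish with units), and indeed the paper's induction, unrolled, is a sequential one-bin-at-a-time version of your simultaneous packing. What your version buys is the isolation of the single inequality $B\le p(m-l)$ as the engine of the argument, and it quietly sidesteps a small point the paper glosses over: the paper's maximality step picks $i_0\in\Lambda\setminus(\Delta_1\cup\Omega)$ without checking this set is nonempty (the corner case is harmless — if $\Lambda=\Delta_1\cup\Omega$ then $\sum_{i\in\Delta_1}n_i=(p+1)(m-l)\ge m-l$ directly — but your averaging argument never needs such a case split). The paper's inductive form, for its part, matches the way Lemma \ref{th4.6} is applied and keeps the bookkeeping of which units are consumed explicit.
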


\begin{proof}
The lemma is trivially true for $p=1$, suppose it is true for $p$.
For $p+1$, let $\Omega\subset\{i: n_i=1\}$ with $\#\Omega=(p+1)l$,
and select a maximal subset $\Delta_1$ of $\Lambda\setminus \Omega$
such that $\sum_{i\in \Delta_1}n_i<m$. We claim that
$$
{\sum}_{i\in \Delta_1}n_i\ \geq\  m-l.
$$
For otherwise, take $i_0\in \Lambda\setminus(\Delta_1\cup\Omega)$,
then $\sum_{i\in \Delta_1\cup\{i_0\}}n_i<(m-l)+l=m$, which
contradicts the maximality of $\Delta_1$ and the claim follows.
Choose a subset $\Omega_1$ from $\Omega$ such that
$\#\Omega_1=m-\sum_{i\in \Delta_1}n_i ~(\leq l)$ and set
$\Lambda_1=\Delta_1\cup \Omega_1$, then $\sum_{i\in
\Lambda_1}n_i=m$.

Note that for $\Lambda'=\Lambda\setminus\Lambda_1$, $\#\{i\in
\Lambda': n_i=1\}\geq pl$. Applying the inductive hypothesis on $p$,
we get a decomposition $\Lambda'=\bigcup_{s=2}^{p+1}\Lambda_s$ with
$\sum_{i\in \Lambda_s}n_i=m$ for $s\geq 2$. Therefore, the assertion
for $p+1$ holds, and the lemma is proved.
\end{proof}

Lemma \ref{th4.5} yields the following rearrangement lemma we need (see also \cite{DeHe10}).

\begin{Lem}\label{th4.6}
Let ${\mathbf  b}=[b_1,\dots,b_r]^t\in {\mathbb N}^r$ with $b_1=1$.
Let $\ell = {\max_j b_j}$ and ${\mathbf a}=[a_1,\dots,a_r]\in
{\mathbb Z}_+^r$ such that $a_1\geq \ell^2$. Suppose there exists $m
> \ell$
 such that ${\mathbf ab}=pm$ for some integer $0<p\leq \ell$,  then ${\mathbf a}$ is $(m,{\mathbf b})$-rearrangeable.
\end{Lem}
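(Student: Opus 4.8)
The plan is to reduce the statement directly to the combinatorial Lemma~\ref{th4.5}, reinterpreting $\mathbf a$ and $\mathbf b$ in the ball language of Remark~(1) after Definition~\ref{def4.1}. First I would introduce an index set $\Lambda$ listing the balls: for each type $j$ with $1\le j\le r$ place $a_j$ indices in $\Lambda$, and to an index $i$ coming from type $j$ attach the weight $n_i=b_j$ while also recording the label $j(i)=j$ for later use. Then $\sum_{i\in\Lambda} n_i = \sum_{j=1}^r a_j b_j = \mathbf a\mathbf b = pm$, matching the hypothesis $\sum_{i\in\Lambda} n_i = pm$ of Lemma~\ref{th4.5}.

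Next I would check the two quantitative hypotheses of that lemma with the choice $l=\ell$. Since $\ell=\max_j b_j$, every weight satisfies $n_i\le\ell$, and the assumption $m>\ell$ gives $n_i\le\ell=l<m$. For the supply of unit weights, $b_1=1$ shows the $a_1$ balls of type $1$ all have weight $1$, so $\#\{i\in\Lambda:n_i=1\}\ge a_1$; together with $a_1\ge\ell^2$ and $p\le\ell$ this gives $\#\{i:n_i=1\}\ge\ell^2\ge p\ell=pl$. Hence Lemma~\ref{th4.5} applies and yields a partition $\Lambda=\bigcup_{s=1}^p\Lambda_s$ with $\sum_{i\in\Lambda_s}n_i=m$ for every $1\le s\le p$.

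Finally I would translate this partition into the required matrix. Setting $c_{sj}=\#\{i\in\Lambda_s:j(i)=j\}$ produces a nonnegative integral matrix $C=[c_{sj}]_{p\times r}$; since each ball lies in exactly one group $\sum_{s=1}^p c_{sj}=a_j$, giving $\mathbf a=[1,\dots,1]C$, and since the $s$-th group has total weight $m$, $\sum_{j=1}^r c_{sj}b_j=\sum_{i\in\Lambda_s}n_i=m$, giving $C\mathbf b=[m,\dots,m]^t$. By Definition~\ref{def4.1}, $\mathbf a$ is $(m,\mathbf b)$-rearrangeable. The one point I would watch is this last step: Lemma~\ref{th4.5} partitions by weight alone, so when distinct types share a common value of $b_j$ one must keep the labels $j(i)$ to recover the counts $c_{sj}$ correctly; the partition is indifferent to the labels, but $C$ is not. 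The rest is just matching numerical hypotheses, the decisive one being $a_1\ge\ell^2\ge p\ell$, which makes the reservoir of unit-weight balls large enough to feed the lemma.
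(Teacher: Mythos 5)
Your proof is correct and follows essentially the same route as the paper: list the balls with multiplicities $a_j$ and weights $b_j$, verify the hypotheses of Lemma~\ref{th4.5} with $l=\ell$ via $a_1\ge \ell^2\ge p\ell$ and $\ell<m$, and read off $C=[c_{sj}]$ from the resulting partition $\Lambda=\bigcup_{s=1}^p\Lambda_s$. Your two refinements---counting $c_{sj}$ by the recorded type label $j(i)$ rather than by the weight value (the paper's count $c_{sj}=\#\{k\in\Lambda_s: n_k=b_j\}$ would conflate distinct types sharing a common $b_j$), and letting types with $a_j=0$ simply contribute empty columns instead of the paper's separate zero-entry case---are minor improvements within the same argument.
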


\begin{proof} We first assume that all $a_i >0$.
By the assumption, we have $\ell < m$ and $a_1\geq \ell^2\geq p
\ell$. Define a sequence $\{n_j\}_{j=1}^r$ by
\begin{equation*}
n_j  = \left \{
\begin {array} {ll}
 b_1  \ \  &j=1,\dots,a_1;\\
  b_2  \ \  &j=a_1+1,\ \dots,\ a_1+a_2;\\
 \vdots   &\\
 b_r  \ \ &j=\sum_{j=1}^{r-1}a_j+1,\
\dots,\ \sum_{j=1}^r a_j.
\end{array} \right .
\end{equation*}
Note that $n_j \leq \ell$. Let $\Lambda=\{1,2,\ \dots, \
\sum_{j=1}^r a_j\}$ be the index set. Then the assumption ${\bf a}{\bf b} =pm$  is
equivalent to:
$$
{\sum}_{j\in\Lambda }n_j = pm.
$$
By Lemma \ref{th4.5}, there is a decomposition
$\Lambda=\bigcup_{s=1}^p\Lambda_s$ satisfying $\sum_{j\in
\Lambda_s}n_j=m$ for $1\leq s\leq p$. Counting the number of
$b_j$'s in each group $s$,
$$
c_{sj}=\#\{k\in \Lambda_s: n_k=b_j\}, \quad 1\leq s\leq p, \ 1\leq
j\leq r.
$$
The lemma follows by letting the matrix $C=[c_{sj}]_{p\times r}$.

If some of the $a_i$ equals zero. Without loss generality we assume
that $a_r=0$ and  $a_i >0,\ 1\leq i \leq r-1$. Let ${\bf a}' = [a_1,
\dots , a_{r-1}]$ and ${\bf b}' =  [b_1, \dots b_{r-1}]$, then ${\bf
a}'{\bf b}' = pm$ and by the above conclusion, ${\bf a}'$ is $(m,
{\bf b})$-rearrangeable by a $p\times (r-1)$ matrix $C'$.  Let $C$
be the $p\times r$ matrix obtained by adding a last column ${\bf 0}$
to $C'$. Then it is easy to see that ${\bf a}$ is $(m, {\bf
b})$-rearrangeable.
\end{proof}

\begin{Prop}\label{th4.7}
Let $A$ be an $r\times r$ nonnegative integral matrix  and is
primitive (i.e., $A^n>0$ for some $n>0$). Let ${\mathbf
b}=[b_1,\dots,b_r]^t\in {\mathbb N}^r$ with $b_1=1$ and $A{\mathbf
b}=m{\mathbf b}$ for some $m\in{\mathbb N}$. Then $A^k$ is
$(m^k,{\mathbf b})$-rearrangeable for some $k>0$.
\end{Prop}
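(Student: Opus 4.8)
The plan is to apply the rearrangement Lemma~\ref{th4.6} row by row to a high power $A^K$, with the role of $m$ there played by $m^K$. Write $A^K=[a_{ij}^{(K)}]$ and let $\ell=\max_j b_j$. Since $A{\mathbf b}=m{\mathbf b}$ gives $A^K{\mathbf b}=m^K{\mathbf b}$, the $i$-th row ${\mathbf a}_i^{(K)}$ of $A^K$ satisfies
$$
{\mathbf a}_i^{(K)}{\mathbf b}=(A^K{\mathbf b})_i=b_i\,m^K,
$$
so the product hypothesis of Lemma~\ref{th4.6} holds with $p=b_i$ and $0<b_i\le\ell$, while the requirement $m^K>\ell$ will be automatic once $K$ is large. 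With $b_1=1$ already assumed, the only hypothesis of Lemma~\ref{th4.6} that is not immediate is that the distinguished first entry $a_{i1}^{(K)}$ is at least $\ell^2$.

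The crux is therefore to bound the first-column entries from below, and this is exactly where primitivity enters. First I would dispose of the degenerate case $m=1$: then $A^{n_0}{\mathbf b}={\mathbf b}$ for any $n_0$ with $A^{n_0}>0$, whereas $A^{n_0}>0$ forces $(A^{n_0}{\mathbf b})_i\ge\sum_j b_j$; hence $b_i\ge\sum_j b_j$ for every $i$, which for ${\mathbf b}\in{\mathbb N}^r$ is only possible when $r=1$, a trivial case ($A=[m]$ is directly rearrangeable). Thus I may assume $m\ge2$, so that $m^k\to\infty$. Now fix $n_0$ with $A^{n_0}>0$, so that $a_{j1}^{(n_0)}\ge1$ for all $j$, and for a parameter $k$ set $K=k+n_0$. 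Then
$$
a_{i1}^{(K)}=\sum_{j=1}^r a_{ij}^{(k)}a_{j1}^{(n_0)}\ \ge\ \sum_{j=1}^r a_{ij}^{(k)}\ \ge\ \frac1\ell\sum_{j=1}^r a_{ij}^{(k)}b_j=\frac{(A^k{\mathbf b})_i}{\ell}=\frac{m^k b_i}{\ell}\ \ge\ \frac{m^k}{\ell}.
$$
Choosing $k$ so large that $m^k\ge\ell^3$ then yields $a_{i1}^{(K)}\ge\ell^2$ for every $i$, and also $m^K\ge m^k\ge\ell^3>\ell$ (one checks $A^K>0$ as well, so no vanishing entries cause trouble).

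With these bounds in hand, Lemma~\ref{th4.6} applies to each row ${\mathbf a}_i^{(K)}$ of $A^K$, with $m^K$ in place of $m$, so every row is $(m^K,{\mathbf b})$-rearrangeable; by definition $A^K$ is $(m^K,{\mathbf b})$-rearrangeable, which is the assertion with the exponent $k=K$. The main obstacle I anticipate is precisely the displayed lower bound: one must use primitivity not merely to make the row sums of $A^k$ large (that already follows from the eigen-relation $A^k{\mathbf b}=m^k{\mathbf b}$), but to funnel that growth into the single entry $a_{i1}^{(K)}$ tied to $b_1=1$, and to confirm $m\ge2$ so that $m^k\ge\ell^3$ is attainable.
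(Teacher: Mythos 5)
Your proposal is correct and takes essentially the same route as the paper: both apply Lemma \ref{th4.6} to a sufficiently high power of $A$, using primitivity together with the eigen-relation $A^k{\mathbf b}=m^k{\mathbf b}$ to secure $m^k>\ell$ and first-column entries of size at least $\ell^2$. Your write-up merely makes explicit two points the paper asserts without detail, namely the degenerate case $m=1$ and the quantitative bound $a_{i1}^{(K)}\geq m^k/\ell$ obtained from the factorization $A^K=A^kA^{n_0}$.
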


\begin{proof}
 Let $\ell = {\max_j b_j}$.  Observe that $A$ is primitive, there exists a large  $k$ such that $\ell < m^k$,  and
$A^k>0$ with every entry greater than $\ell^2$. Hence Lemma
\ref{th4.6} implies $A^k$ is $(m^k, {\mathbf b})$-rearrangeable.
\end{proof}

\noindent{\bf Proof of Theorem \ref{th3.6'}} \quad  Let $A$ be the incidence matrix, and $A{\bf b} = m{\bf b}$ where ${\mathbf b}=[b_1,\dots,b_r]^t$
with $b_i=\#T$ where $[T]={\mathscr T}_i$ (Proposition \ref
{th3.5}). If we let the root $o$ to be ${\mathscr T}_1$, then it is
clear from the definitions of $A$, $\bf b$ and $A{\bf b} = m {\bf
b}$ that $b_1 =1$. Hence Proposition \ref {th4.7} implies that $A^k$
is $(m^k, {\bf b})$-rearrangeable for some $k>0$.

 To prove the last part, we can assume without loss of generality that
{\it $A$ is $(m, {\mathbf b})$-rearrangeable and
$\max_i b_i \leq m$}.
For otherwise, by the primitive assumption on $A$ and by the same reasoning as in the proof of Theorem \ref{th3.6},
we can consider the augmented tree as the IFS defined by the $k$-th iteration of $\{S_i\}_{i=1}^m$,  and the corresponding $A^k$ is
$(m^k, {\mathbf b})$-rearrangeable. Hence we can apply  Theorem \ref{th3.6} and complete the proof of Theorem \ref{th3.6'}.   \hfill $\Box$

\end{section}

\bigskip

\begin{section}{\bf Examples}

In this section, we provide several examples to illustrate our
theorems of simple augmented tree and Lipschitz equivalence.  Note that all the IFS's considered here satisfy condition (H); also the bounded closed invariant sets $J$ we use are connected tiles and
they have finite number of neighbors. By using the following  lemma, we see that the process of finding the connected components will end in finitely many steps.

We assume the IFS consists of contractive similutudes $S_i(x)= B^{-1}(x+d_i), i=1,\dots, m$,
where $B^{-1}= rR$ is the scaled orthogonal matrix.  Let $J$ be a closed subset such that $S_i(J) \subset J$ for each $i$. Then for ${\bf
i}=i_1\cdots i_k\in \Sigma^k$, we have  $J_{\bf i}=S_{\bf
i}(J)=B^{-k}(J+d_{\bf i})$ where $d_{\bf i}= d_{i_k}+
Bd_{i_{k-1}}+\cdots+ B^{k-1}d_{i_1}$.

\begin{Lem}\label{th5.0}
For two horizontal connected components $T_1\subset \Sigma^{k_1}, T_2\subset
\Sigma^{k_2}$ with $\#T_1 = \#T_2=\ell$. If there exist
smilitudes $\phi_i(x)= B^{k_i}x+c_i,\ i=1,2$, where $c_i\in
{\Bbb R}^d$ such that
$$
\phi_1(\cup_{{\bf i}\in T_1}J_{\bf i})=\phi_2(\cup_{{\bf j}\in
T_2}J_{\bf j})=J\cup(J+v_1)\cup\cdots\cup(J+v_{\ell-1})
$$
for some vectors $v_1,\dots, v_{\ell-1}\in {\Bbb R}^d$, then $T_1\sim T_2$.
\end{Lem}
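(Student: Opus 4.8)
The plan is to normalize both components by the given similitudes $\phi_1,\phi_2$ to one common geometric configuration, read off a bijection $T_1\to T_2$ by matching congruent tiles, and extend it to the offspring by appending the same symbol. First I would use the formula $J_{\bf i}=B^{-k_i}(J+d_{\bf i})$ recorded just above the lemma to compute, for ${\bf i}\in T_1\subset\Sigma^{k_1}$,
$$
\phi_1(J_{\bf i})=B^{k_1}\big(B^{-k_1}(J+d_{\bf i})\big)+c_1=J+(d_{\bf i}+c_1),
$$
and likewise $\phi_2(J_{\bf j})=J+(d_{\bf j}+c_2)$ for ${\bf j}\in T_2$. By hypothesis both unions equal $J\cup(J+v_1)\cup\cdots\cup(J+v_{\ell-1})$; writing $V=\{0,v_1,\dots,v_{\ell-1}\}$ and using that a bounded set $J$ admits no nonzero translational period, distinct translation vectors yield distinct tiles, so $\{d_{\bf i}+c_1:{\bf i}\in T_1\}=V=\{d_{\bf j}+c_2:{\bf j}\in T_2\}$, each labeling being a bijection onto the $\ell$ tiles. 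This lets me define $g_0:T_1\to T_2$ by sending ${\bf i}$ to the unique ${\bf j}$ with $d_{\bf j}+c_2=d_{\bf i}+c_1$, equivalently with $\phi_1(J_{\bf i})=\phi_2(J_{g_0({\bf i})})$.

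Next I would extend $g_0$ to $g:T_1\cup T_1\Sigma\to T_2\cup T_2\Sigma$ by $g({\bf i})=g_0({\bf i})$ and $g({\bf i}t)=g_0({\bf i})\,t$ for $t\in\Sigma$. This is manifestly a bijection, and it carries the parent of ${\bf i}t$ (namely ${\bf i}$) to the parent of $g_0({\bf i})\,t$ (namely $g_0({\bf i})$), so it preserves ${\mathcal E}_v$ in both directions.

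The main point is that $g$ preserves and reflects horizontal edges at both levels. Using $\phi_1\circ S_{\bf i}(y)=y+d_{\bf i}+c_1$ I get, for every $t\in\Sigma$,
$$
\phi_1(J_{{\bf i}t})=S_t(J)+(d_{\bf i}+c_1)=S_t(J)+(d_{g_0({\bf i})}+c_2)=\phi_2(J_{g_0({\bf i})\,t}),
$$
and the same identity with no appended symbol gives $\phi_1(J_{\bf i})=\phi_2(J_{g_0({\bf i})})$. Since $\phi_1,\phi_2$ are affine bijections they preserve nonempty intersection, so for any two vertices $x,x'\in T_1\cup T_1\Sigma$ the tiles $J_x,J_{x'}$ and $J_{g(x)},J_{g(x')}$ are sent by $\phi_1$ and $\phi_2$ respectively to the \emph{identical} pair of translated copies; hence $J_x\cap J_{x'}\neq\emptyset$ iff $J_{g(x)}\cap J_{g(x')}\neq\emptyset$. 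By the definition of ${\mathcal E}_h$ in (\ref{eq2.5}) this means $g$ preserves and reflects ${\mathcal E}_h$ (the admissibility requirement of Definition \ref{de2.2} is automatic, since $J_{{\bf i}t}\subset J_{\bf i}$ forces the parents of horizontally adjacent children to coincide or to be horizontally adjacent). Therefore $g$ is a graph isomorphism and $T_1\sim T_2$.

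I expect the only delicate step to be the bijectivity of $g_0$, i.e.\ verifying that the two normalized families consist of exactly $\ell$ distinct congruent tiles matched one-to-one; once that is secured, the preservation of both edge types falls out directly from the affine normalization, as the whole subgraph $T_i\cup T_i\Sigma$ is mapped by $\phi_i$ onto the single fixed configuration $\{J+v:v\in V\}$ together with its subtiles $\{S_t(J)+v:v\in V,\ t\in\Sigma\}$, whose incidence structure is intrinsic.
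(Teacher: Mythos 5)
Your proposal is correct and takes essentially the same route as the paper's own proof: both normalize the two configurations by $\phi_1,\phi_2$ onto the common pattern $J\cup(J+v_1)\cup\cdots\cup(J+v_{\ell-1})$, observe that the offspring cells then become the common family $\{S_t(J)+v_j\}$ (the paper computes $\phi_1\big(\bigcup_{{\bf i}\in T_1}\bigcup_{i=1}^m J_{{\bf i}i}\big)=\bigcup_{j}\bigcup_{i}(J_i+v_j)=\phi_2\big(\bigcup_{{\bf j}\in T_2}\bigcup_{j=1}^m J_{{\bf j}j}\big)$), and conclude the isomorphism because similarity maps preserve the intersection/connectedness structure. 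Your explicit bijection $g_0$ (matching translation vectors $d_{\bf i}+c_1=d_{\bf j}+c_2$) and the edge-by-edge verification are just a more detailed rendering of the paper's one-line conclusion, and the tile-matching identification $\{d_{\bf i}+c_1:{\bf i}\in T_1\}=\{0,v_1,\dots,v_{\ell-1}\}$ that you flag as the delicate step is used implicitly in the paper's displayed computation as well.
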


\begin{proof}
By the definition and (\ref{eq3.0}), it suffices to prove
$T_1\Sigma$ and $T_2\Sigma$ have the same connectedness structure,
equivalently, to prove this for $\bigcup_{{\bf i}\in T_1}\bigcup_{i=1}^m
J_{{\bf i}i}$ and $\bigcup_{{\bf j}\in T_2}\bigcup_{j=1}^m J_{{\bf
j}j}$. Letting $v_0=0$, we have
\begin{eqnarray*}
 \phi_1\big(\bigcup_{{\bf i}\in
T_1}\bigcup_{i=1}^m J_{{\bf
i}i}\big)
 &=&  B^{k_1}\big(\bigcup_{{\bf i}\in T_1}\bigcup_{i=1}^m J_{{\bf i}i}\big)+c_1 \\
 &=&  B^{k_1}\big(\bigcup_{{\bf i}\in T_1}\bigcup_{i=1}^m B^{-k_1-1}(J+d_i+Bd_{\bf i})\big)+c_1 \\
 &=&  \bigcup_{{\bf i}\in T_1}\bigcup_{i=1}^m (J_i+d_{\bf
 i})+c_1
 =  \bigcup_{j=0}^{\ell-1}\bigcup_{i=1}^m (J_i+v_j).
\end{eqnarray*}
Similarly we have $\phi_2\big(\bigcup_{{\bf j}\in T_2}\bigcup_{j=1}^m
J_{{\bf j}j}\big)= \bigcup_{j=0}^{\ell-1}\bigcup_{i=1}^m
(J_i+v_j).$  Since $\phi_1,\phi_2$ are similarity maps which preserve the
connectedness,  the lemma follows.
\end{proof}

We remark that it is a lot simpler to use the approach here than
the method of constructing the graph directed system as in
\cite{DeHe10}, \cite{RaRuXi06}, \cite{XiXi10}. In fact Example
\ref{add.ex} shows that it may be difficult to use the later to
prove the Lipschitz equivalence. The same example also shows that
the converse of the above lemma does not hold.

\begin{Exa}\label{example1}{
 Let ${\mathcal D}_1=\{0,2,4\}$ and ${\mathcal D}_2=\{0,3,4\}$ be two
digit sets, and let  $K_1$ and $K_2$ be the two self-similar sets
defined by:
$$
K_1 =\frac{1}{5}(K_1+{\mathcal D}_1), \quad K_2
=\frac{1}{5}(K_2+{\mathcal D}_2).
$$
Then $K_1$ is clearly dust-like and $K_1 \simeq K_2$ (see Figure
\ref{fig.1}).}
\end{Exa}

  This is a well-known example of Lipschitz equivalence of self-similar sets called the {\it $\{1,3,5\}-\{1, 4, 5\}$ problem} (\cite{DaSe97}, \cite{RaRuXi06}),
 which was proved by showing that they have the same graph directed system. In our approach, the associated augmented tree for
 $K_1$ is just the standard rooted tree with no horizontal edges, and the incidence matrix is $[3]$. For  $K_2$, by letting $S_i (x) = (x+ d_i)/3,\ d_i \in {\mathcal D}_2$, and use $J = [0,1]$ in the definition of the augmented tree,
  we see that in total, there are three connected classes (after three iterations): ${\mathscr T}_1=[{o}],
{\mathscr T}_2=[\{2,3\}]$ and ${\mathscr T}_3=[\{22, 23, 31\}]$. Hence
$X$ is simple and the incidence matrix is
$$
A=\left[\begin{array}{rrr}
          1 &  1 & 0  \\
          1 &  1 &  1 \\
          1 & 1 &  2
    \end{array} \right].
$$
Clearly $A$ is primitive, and  $K_2 \simeq K_1$  by Theorem
\ref{th3.9}. \hfill $\Box$

\medskip

\begin{figure}[h]
  \centering
  \subfigure[]{
  \includegraphics[width=5cm]{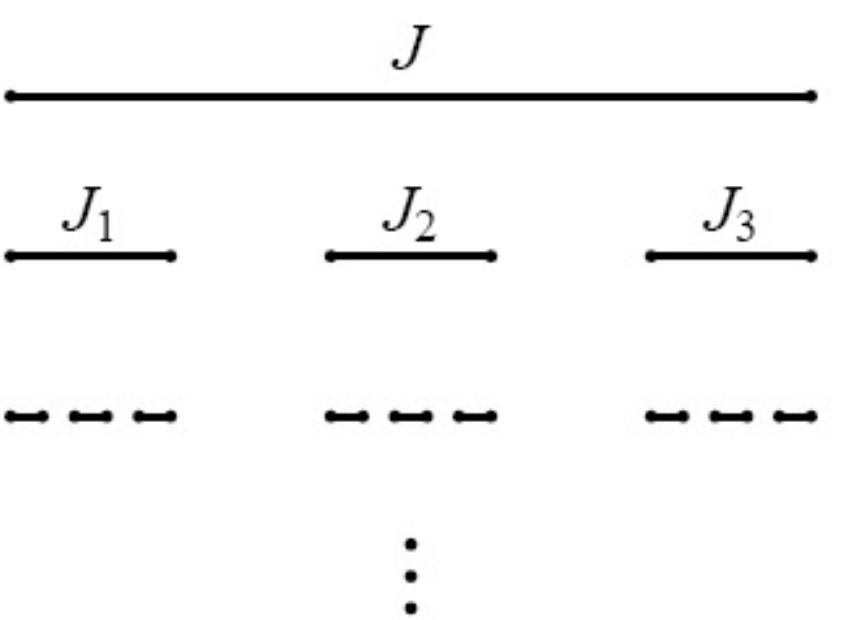}
 }
 \qquad
 \subfigure[]{
  \includegraphics[width=5cm]{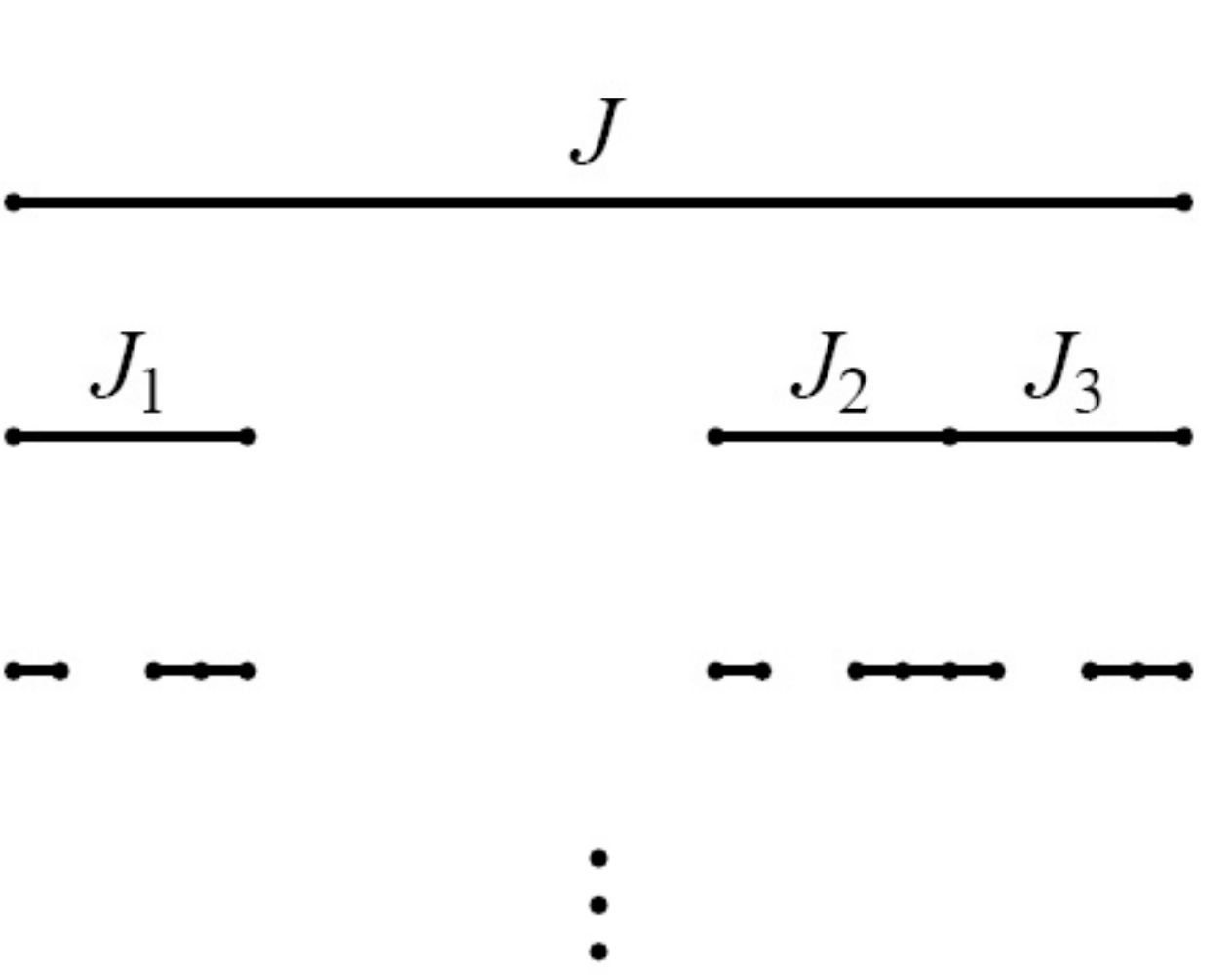}
 }
\caption{}\label{fig.1}
\end{figure}

The following proposition is a generalization of Example \ref{example1} on ${\Bbb R}$ \cite{RaRuXi06}.

\begin{Prop} \label{th5.2}
Let ${\mathcal D}$ be a proper subset of $\{0,1,\dots,n-1\}$, and
let
 $K$ the self-similar set on ${\Bbb R}$ satisfying
$$
 K =\frac{1}{n}(K+{\mathcal D}).
$$
Then the corresponding augmented tree is simple and the
incidence matrix is primitive. Consequently  $K$ is Lipschitz equivalent to a dust-like set.
\end{Prop}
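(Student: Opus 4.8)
The plan is to realize the augmented tree explicitly through the invariant set $J=[0,1]$, translate everything into elementary statements about runs of consecutive integers, and then read off both simplicity and the incidence structure from that combinatorial picture. First I would fix the IFS $S_d(x)=(x+d)/n$, $d\in\mathcal D$, note $S_d([0,1])\subseteq[0,1]$ so that $J=[0,1]$ is invariant, and record that the OSC holds (open set $(0,1)$) and condition (H) is satisfied (disjoint level-$k$ cells are separated by a fixed multiple of $r^k=n^{-k}$). For $\mathbf u\in\Sigma^k$ one has $J_{\mathbf u}=[d_{\mathbf u}n^{-k},(d_{\mathbf u}+1)n^{-k}]$ with integer address $d_{\mathbf u}=\sum_j n^{k-j}d_{i_j}$; since the $d_{i_j}$ are genuine base-$n$ digits, $\mathbf u\mapsto d_{\mathbf u}$ is injective on $\Sigma^k$, and two level-$k$ cells meet iff their addresses differ by $1$. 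Hence a horizontal connected component is exactly a maximal block of consecutive integers inside $\mathcal D_k=\{d_{\mathbf u}:\mathbf u\in\Sigma^k\}$, the integers all of whose base-$n$ digits lie in $\mathcal D$.

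The key elementary observation I would isolate is that \emph{every run spans at most two consecutive length-$n$ blocks}. Indeed, if a run contained a whole interior block $\{nB,\dots,nB+n-1\}$, then all of $0,1,\dots,n-1$ would occur as units digits of elements of $\mathcal D_k$, forcing $\mathcal D=\{0,\dots,n-1\}$, against properness. So a run either sits in one block (length at most the longest run of $\mathcal D$) or crosses one boundary, in which case its length is (the run of $\mathcal D$ ending at $n-1$) plus (the run of $\mathcal D$ starting at $0$); either way the length is at most $m=\#\mathcal D$. This uniform bound (which also follows from the converse part of Proposition~\ref{additional prop}, since $K\subset\mathbb R$ satisfies the OSC and $\dim_H K=\log m/\log n<1$ forces $K$ totally disconnected) yields simplicity via Lemma~\ref{th5.0}: any run $\{d,\dots,d+\ell-1\}$ at level $k$ has $\bigcup_{\mathbf i\in T}J_{\mathbf i}=[dn^{-k},(d+\ell)n^{-k}]$, which the similitude $x\mapsto n^{k}x-d$ sends to the standard union $[0,1]\cup\cdots\cup([0,1]+(\ell-1))$; thus \emph{two runs are equivalent as soon as they share the same length}. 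Hence there are at most $m$ connected classes, indexed by the occurring cardinalities, and $X$ is simple.

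With the classes indexed by length, I would compute $A$ from the offspring of a length-$\ell$ run $\{d,\dots,d+\ell-1\}$, whose children occupy the $\ell$ translated copies $\bigcup_{t=0}^{\ell-1}(n(d+t)+\mathcal D)$; moreover $\#T=\ell=b_i$ since $\mathbf u\mapsto d_{\mathbf u}$ is injective. Whether two adjacent copies merge depends only on $\mathcal D$: they merge precisely when $0\in\mathcal D$ and $n-1\in\mathcal D$, so either no interior boundary merges or all do. When none merges, the children split block by block into exactly $\ell=b_i$ groups of total weight $m$, which is an $(m,\mathbf b)$-rearrangement of that row (Definition~\ref{def4.1}). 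When all merge, the same count survives after a one-step shift: I would group each of the $\ell-1$ straddling runs with the middle runs of one block, and place the two standalone end-runs together with the middle runs of the remaining block, again giving $\ell$ groups of weight $m$. Performing this for every row shows $A$ is $(m,\mathbf b)$-rearrangeable, whence Theorem~\ref{th3.6} gives $\partial(X,\mathcal E)\simeq\partial(X,\mathcal E_v)$ and Theorem~\ref{th3.9} gives that $K$ is Lipschitz equivalent to a dust-like set.

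The delicate point, and the one I expect to be the main obstacle, is the assertion that $A$ is \emph{primitive}, i.e. that the directed graph on the length-classes is strongly connected and aperiodic. Reachability of the longest class through successive mergers, together with the fact that this class reproduces every shorter run-type inside its blocks, supplies aperiodicity (a self-loop on the top class) and connectivity among the positive-level classes; what needs care is the cardinality-one class carried by the root, which is reproduced at a positive level only when $\mathcal D$ has an isolated digit, so primitivity of the \emph{full} matrix must be argued (or this case handled) separately. Since the Lipschitz conclusion rests only on rearrangeability, I would anchor the proof on the rearrangement argument above (hence on Theorem~\ref{th3.6} rather than Theorem~\ref{th3.6'}) and present primitivity as the generic refinement.
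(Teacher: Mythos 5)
Your combinatorial reduction is sound and matches what the paper leaves implicit: with $J=[0,1]$ the level-$k$ cells are indexed injectively by the base-$n$ addresses $d_{\mathbf u}$, components are maximal runs of consecutive addresses, the properness of $\mathcal D$ gives the two-block bound on runs, and Lemma \ref{th5.0} then shows two runs of equal length are equivalent, so $X$ is simple (the paper dismisses this with ``it is easy to see''). Where you genuinely diverge is in the second half. The paper splits into the cases $\{0,n-1\}\not\subset\mathcal D$ and $\{0,n-1\}\subset\mathcal D$, writes down the incidence matrix explicitly in each case (rows $[a_1b_i,\dots,a_rb_i]$ in the first case, corrected by the $b_i-1$ straddling components in the second), observes primitivity, and invokes Theorem \ref{th3.6'}. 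You instead verify $(m,\mathbf b)$-rearrangeability of each row directly and invoke Theorem \ref{th3.6}; your grouping is correct --- in the merging case each straddling run has weight $b_f+b_l$ (first plus last run of $\mathcal D$) and pairs with the middle runs of one block (weight $m-b_f-b_l$), the two end runs absorbing the remaining block, yielding exactly $\ell=b_i$ groups of weight $m$. This bypasses primitivity entirely for the Lipschitz conclusion and is, if anything, cleaner than the paper's case analysis.

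The gap relative to the literal statement is that primitivity is asserted in the Proposition and you explicitly defer it. But your suspicion about the root class is not a mere technicality: primitivity as stated actually fails in degenerate cases, so the deferral cannot be closed without qualifying the statement. Take $n=3$, $\mathcal D=\{0,1\}$: all runs at positive levels are pairs, the classes are $[o]$ and $[\{0,1\}]$ with $\mathbf b=[1,2]^t$, $m=2$, and $A=\left[\begin{smallmatrix}0&1\\0&2\end{smallmatrix}\right]$, whose first column vanishes in every power since the cardinality-one class never recurs; similarly $n=5$, $\mathcal D=\{0,1,3,4\}$ in the merging case (only pairs and straddling quadruples occur). The paper's own proof smuggles in the missing hypothesis with ``for convenience, we assume $b_1=1$,'' i.e.\ that a singleton run occurs --- exactly the condition you identified ($\mathcal D$ having an isolated digit, so that $[o]$ is identified with a recurring singleton class and the first column of $A$ is positive). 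Both matrices above are $(m,\mathbf b)$-rearrangeable, so your route proves simplicity and the Lipschitz equivalence to a dust-like set in all cases; just state honestly that the primitivity clause holds only under that extra assumption (or for the submatrix on the recurring classes), rather than presenting it as a ``generic refinement'' to be supplied later.
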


\begin{proof}   Let $S_j(x)=\frac{1}{n}(x+d_j),\ d_j\in {\mathcal D}$ be the IFS for $K$ with  $\# {\mathcal D}= m (<n)$. We will ignore
the trivial case that the IFS is strongly separated. We use  $J=[0,1]$ to define the associated augmented tree $X$, and
it is  easy to see that it is simple.  Our main task is to show that the incidence matrix is primitive.
 Let $F_k = \bigcup_{{\bf i}\in \Sigma^k} S_{\bf i}(J)$ be the $k$-th iteration. Let $L_i, 1\leq i\leq \ell$ denote the
 disjoint closed sub-intervals of  $F_1$ arranged from  left to right.

We first consider the case that ${\mathcal D}$ does not contain both
$0$ and $n-1$. In this case,  the connected components of the
augmented tree is determined by the root and the components in the
first level. Let $o$ be the root which represents the connected
class ${\mathscr T}_1$. For the others, let $b_i\ (=n |L_i|)$ be the
number of vertices in the connected component where $ i=1,\dots, r$
for some $r\leq \ell$.  For convenience, we assume $b_1=1$ and $b_r$
corresponds the rightmost sub-interval of $F_1$.  Note that $b_i
>1$ for $2\leq i \leq r$.  Let $a_i$ denote the number of such
components. Then it is easy to see that the incidence matrix is
given by
$$
A=\left[\begin{array}{llll}
          a_1 &  a_2 & \cdots  & a_r   \\
          a_1b_2 &  a_2b_2 & \cdots & a_rb_2 \\
          \vdots  & \vdots &  & \vdots \\
          a_1b_r &  a_2b_r & \cdots & a_{r}b_r
          \end{array}\right].
$$
Hence $A$ is primitive and the proposition follows.

For the case that $\{0, n-1\} \subset {\mathcal D}$, we note that
for an interval $L_i$ in $F_1$ with $ b_i >1$, if we inspect the
sub-intervals of $F_2$ contained in $L_i$, we find that there is a
group of new sub-intervals that come from union of two intervals: one
with right end points $S_{im} (1)$ and one with left endpoint the
$S_{{i+1},1}(0)$ (see Figure \ref{fig.1}(b),  the forth interval on
the third level). There are $b_i-1$ of them and the length is  $1 +
b_r$. If there is already a component of this length, say the second
one, then the incidence matrix is an $r\times r$ matrix
$$
A={\small \left[\begin{array}{lllll}
          a_1 &  a_2 & \cdots & a_{r-1} & a_{r} \\
          a_1b_2-(b_2-1) &  a_2b_2+ (b_2 -1) & \cdots & a_{r-1}b_2 & a_{r}b_2-(b_2-1) \\
          \vdots  & \vdots  &  & \vdots & \vdots \\
          a_1b_r-(b_r-1) &  a_2b_r +(b_r-1) & \cdots & a_{r-1}b_r & a_{r}b_r-(b_r-1)
          \end{array}\right].}
$$
If this is a new component, then the incidence matrix is an $(r+1)\times (r+1)$ matrix with
$$
A={\small \left[\begin{array}{llllll}
          a_1 &  a_2 & \cdots & a_{r-1} & a_{r} & 0  \\
          a_1b_2-(b_2-1) &  a_2b_2 & \cdots & a_{r-1}b_2 & a_{r}b_2-(b_2-1) & b_2-1 \\
          \vdots  & \vdots  &  & \vdots & \vdots & \vdots \\
          a_1b_r-(b_r-1) &  a_2b_r & \cdots & a_{r-1}b_r & a_{r}b_r-(b_r-1) & b_r-1\\
          a_1(1+b_r)-b_r &  a_2(1+ b_r) & \cdots & a_{r-1}(1+b_r) & a_{r}(1+b_r)-b_r & b_r
          \end{array}\right].}
$$
Again, $A$ is primitive that implies the proposition.
\end{proof}

The above proposition also has a higher dimensional extension to the
totally disconnected {\it fractal cubes}  of the form $K =
\frac{1}{n}(K + {\mathcal D})$ where ${\mathcal D} \subset \{0,
\dots , n-1\}^d$  by Xi and Xiong \cite{XiXi10} via constructing a
more complicated graph directed system. Their result can also be put
into the framework of Theorem \ref{th3.6'},  we will omit the
detail. In the following we will consider other totally disconnected
self-similar sets of more general forms.

 The following shows that the $S_j(J)$'s can have large overlaps (though the OSC is still satisfied by Corollary
\ref{cor3.10}). Moreover it shows that the  converse of Lemma \ref{th5.0} does not hold.

\begin{Exa} \label{add.ex}
{
On ${\Bbb R}^2$, let  $K$  be the self-similar set defined by the
IFS
$$
 S_i(x)=\frac{1}{5}(x+d_i) \quad \hbox {where} \quad
 d_i\in  \left\{\left[\begin{array}{rr} 0 \\
0\end{array}\right],\left[\begin{array}{rr} 18/5 \\
0\end{array}\right], \left[\begin{array}{rr} 4 \\
0\end{array}\right], \left[\begin{array}{rr} 0 \\
2\end{array}\right], \left[\begin{array}{rr} 0 \\
3\end{array}\right]\right\}.
$$
We use $J=[0,1]^2$ as the invariant set to define an augmented tree
$X$ for $K$. It can be shown that, in the graph $X$, the two components
$T_1=\{2,3\}$ and $T_2=\{4,5\}$ are equivalent in the augmented tree, as the $T_1\cup T_1\Sigma$ and $T_2\cup T_2\Sigma$ are graph isomorphic
(see Figure \ref{additional ex}), but there do not exist
similarity maps $\phi_1, \phi_2$ as in Lemma \ref{th5.0} such that
$\phi_1(J_2\cup J_3)= \phi_2(J_4\cup J_5)$.

On the other hand, if we continue the process by iterating, we can
see that $X$ is simple with two connected classes ${\mathscr
T}_1=[o]$ and ${\mathscr T}_2=[T_1]$, and the incidence matrix is
 {\small
$\left[\begin{array}{rr}
           1 & 2 \\
           2 & 4
    \end{array} \right]$}  which is primitive. Hence $K$ is
    Lipschitz equivalent to a dust-like self-similar set by Theorem
\ref{th3.9}.

\medskip

\begin{figure}[h]
  \centering
  \subfigure[]{
  \includegraphics[width=5cm]{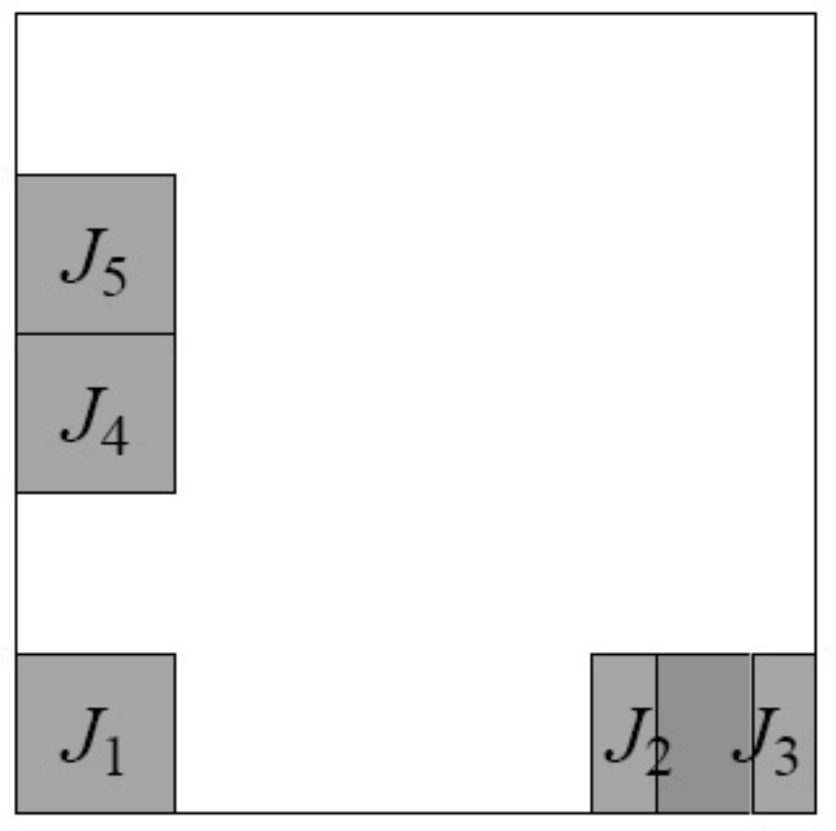}
 }\qquad
 \subfigure[]{
  \includegraphics[width=5cm]{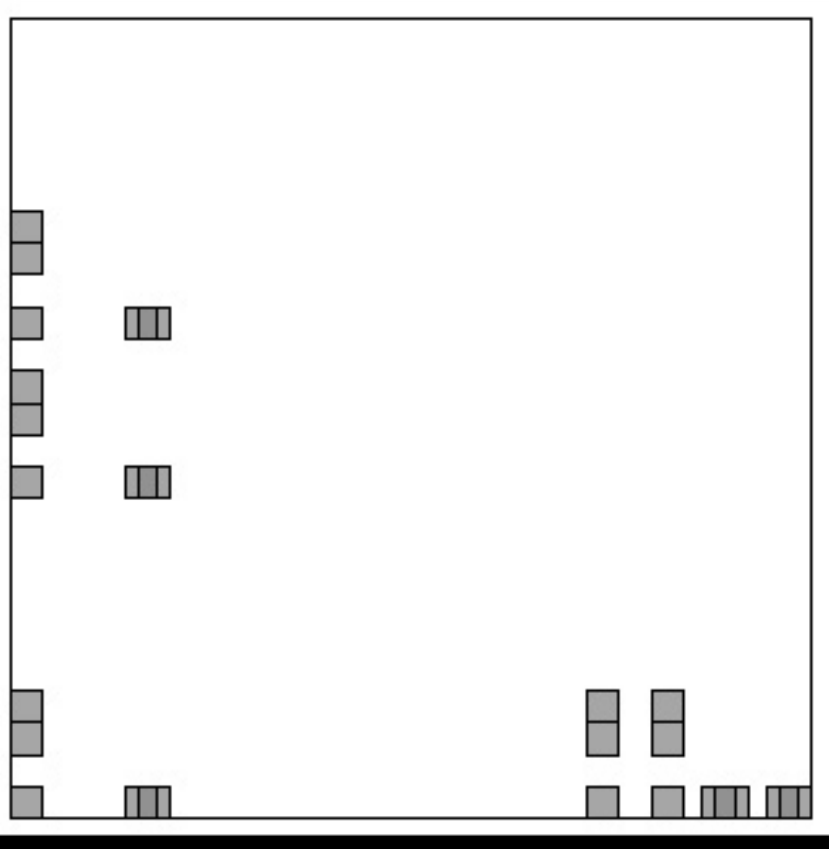}
 }
\caption{The first two iterations for $K$.}\label{additional ex}
\end{figure}
}
\end{Exa}

In the following we consider the self-similar set selected from a
self-similar tile (also the more general self-affine case) to
replace the square and sub-squares in the previous case. Let $B$ be
a $d\times d$  expanding matrix (i.e., all the eigenvalues have
moduli $>1$) and $|\det(B)|=q$; let ${\mathcal
D}=\{d_1,\dots,d_q\}\subset {\Bbb R}^d$ be a digit set. The
attractor $J$ generated by
$$
S_i (x) = B^{-1}(x + d_i), \qquad  d_i\in {\mathcal D}
$$
satisfies $J =B^{-1}( J+ {\mathcal D})$ (more conveniently, we use
$BJ =J+ {\mathcal D}$). It is called a {\it self-affine tile} if the
interior of $J$ is nonempty \cite{LaWa96}. If in addition, $B$ is a
similar matrix, then $J$ is a {\it self-similar tile}.  It is well-known
that if the tile $J$ is disk-like (i.e., homeomorphic to the unit
disc), then $J$ has either six (hexagonal) or eight (square)
neighbors \cite{BaWa01}. This is needed to determine the connected
classes of the augmented tree of the underlying self-similar set.

\begin{Exa} \label{ex5.6}{
$ B=\left[\begin{array}{rr}
           5/2         & -\sqrt{3}/2  \\
          \sqrt{3}/2  &  5/2
    \end{array} \right]
\quad \hbox {and} \quad {\mathcal D}=   \{0,\ \pm v_1,\ \pm v_2,\
\pm(v_1+v_2) \}.
$
with $v_1= \tiny {\left[\begin{array}{rr} 1/2 \\
\sqrt{3}/2\end{array}\right]} ,\  v_2= \tiny {\left[\begin{array}{rr} 1/2 \\
-\sqrt{3}/2\end{array}\right]}$.
If we let $S_i(x) = B^{-1}(x+d_i), d_i \in {\mathcal D}$, then $J$ is a self-similar tile on ${\Bbb R}^2$, and is called the {\it Gosper island} (Figure \ref{example5}(a)). If we let
\begin{equation*}
{\mathcal D}'=\{v_2,\ v_1,\ -v_2,\ -(v_1+v_2)\}
\end{equation*}
and let $K$  be the corresponding self-similar set (see Figure
\ref{example5}(b)). We use $J$ to define the augmented tree.   There
are six neighbors of $J$, they are
\begin{equation} \label{eq5.0}
J \pm v_1,\ \  J \pm v_2,\ \ J \pm (v_1+v_2).
\end{equation}
By making use of (\ref{eq5.0}), we can show that the incidence
matrix is
$$
A=\left[\begin{array}{rrrrrr}
         1 & 1 & 0 & 0 & 0 & 0  \\
         2 & 0 & 1 & 1 & 0 & 0  \\
         2 & 1 & 1 & 0 & 1 & 0  \\
         4 & 0 & 1 & 1 & 0 & 1  \\
         4 & 1 & 1 & 0 & 1 & 1  \\
         6 & 1 & 1 & 0 & 1 & 2
    \end{array} \right]$$
and it is primitive. Hence $K$ is Lipschitz equivalent to a
dust-like set. We omit the detail.}
\end{Exa}

\medskip

\begin{figure}[h]
  \centering
  \subfigure[]{
  \includegraphics[width=5cm]{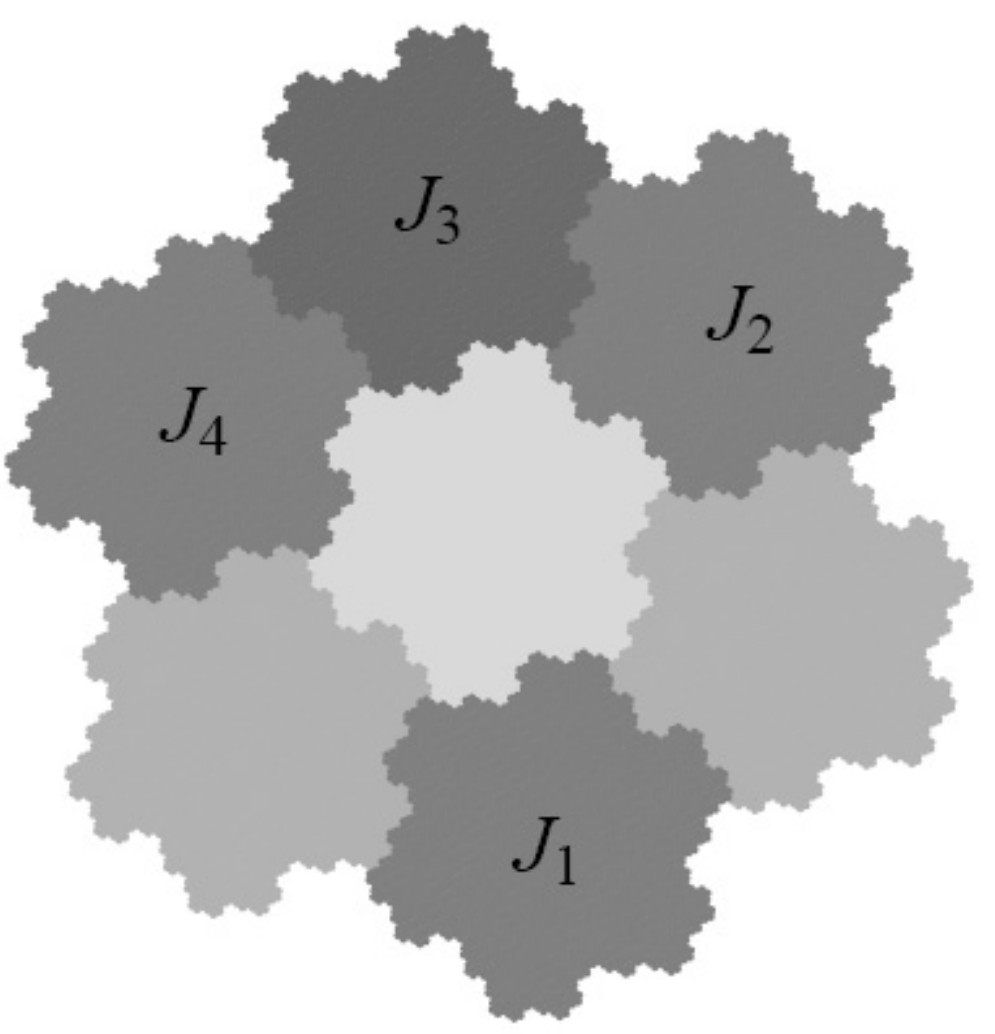}
 }
 \qquad
 \subfigure[]{
  \includegraphics[height=5cm, width=4.5cm]{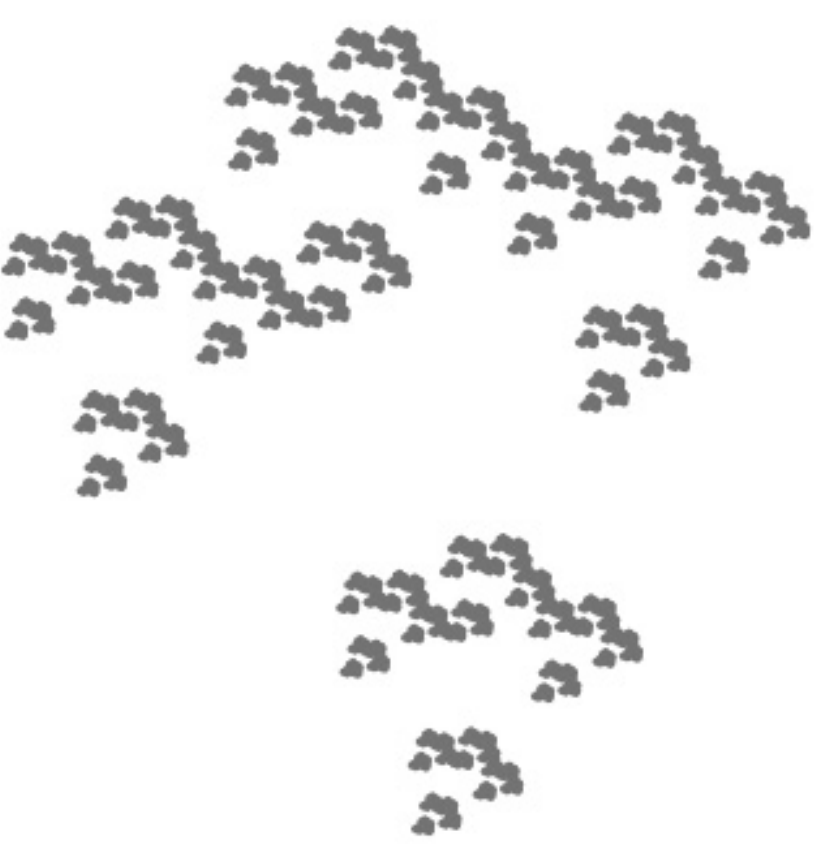}
 }
\caption{}\label{example5}
\end{figure}

Finally we consider a case of self-affine sets which is a  direct
analog of Example \ref{example1}. We make use the modified Lipschitz equivalence in Theorem \ref{th3.12} for self-affine sets.

\begin{Exa}  \label {ex5.5} { Let
$$
B=\left[\begin{array}{rr}
         0 & 1  \\
          -5 & -3
    \end{array} \right]
\quad \hbox {and} \quad {\mathcal D}=   \left\{\left[\begin{array}{rr} 0 \\
i\end{array}\right]: \ i=0,1,2,3,4 \right\}.
$$
Then $J$ is a self-affine tile; moreover it is known that $J$ is homeomorphic to the
unit disc, and it has six neighbors (Theorem 4.1 in \cite{LeLa07}):
\begin{equation} \label{eq5.1}
J \pm v,\ J \pm (Bv+2v),\ J \pm (Bv+3v)  \quad \hbox {with} \quad  v = \left[\begin{array}{rr} 0 \\
1\end{array}\right].
\end{equation}

If we let
\begin{equation*}
{\mathcal D}_1=\left\{\left[\begin{array}{rr} 0 \\
i\end{array}\right]: \
i=0,2,4\right\}\quad \hbox {and} \quad {\mathcal D}_2=\left\{\left[\begin{array}{rr} 0 \\
i\end{array}\right]: \ i=0,3,4\right\}
\end{equation*}
and let $K_1$ and $K_2$ be the corresponding self-affine sets. Then
the augmented trees are simple, $K_1$ is dust-like and  $K_1\simeq
K_2$ (see Figure \ref{example6}).}
\end{Exa}

\medskip

\begin{figure} [h]
  \centering
  \subfigure[]{
  \includegraphics[width=5cm]{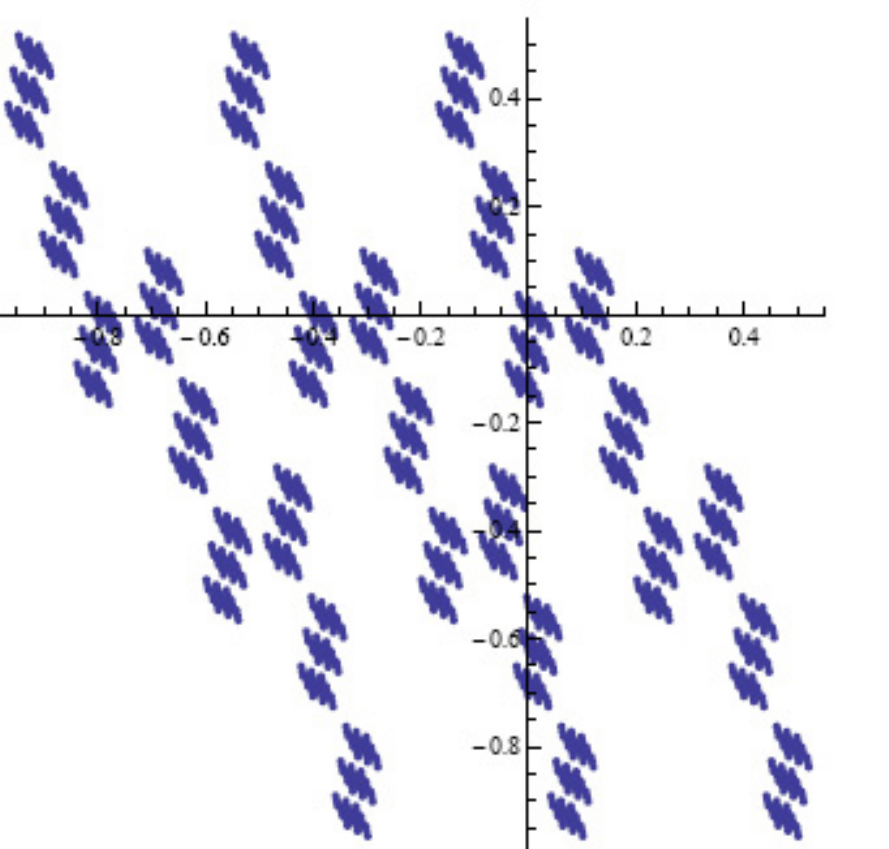}
 }
 \qquad
 \subfigure[]{
  \includegraphics[width=5cm]{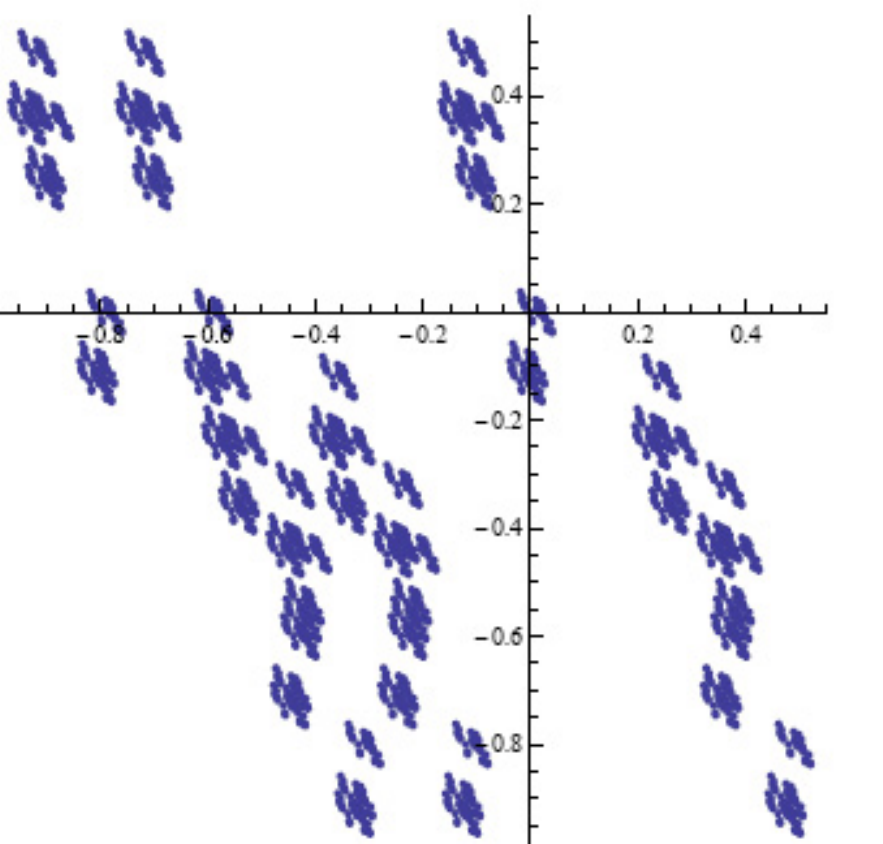}
 }
\caption{}\label{example6}
\end{figure}

Indeed, it is easy to see that the IFS for $K_1$ is strongly
separated. As the moduli of the eigenvalues of $B$ are equal to
$\sqrt{5}$,  by Theorem 1.3 and Corollary 3.2 of \cite{HeLa08},
$\dim_HK_1=\dim_H^w K_1= 2\log 3/ \log 5.$

For $K_2$, we let $S_j(x)=B^{-1}(x+d_j),\ d_j\in {\mathcal D}_2$ be
the IFS.
 Note that the tile $J$ satisfies $BJ= J + {\mathcal D}$, we will use this together with the neighborhood relation of $J$ to select
the corresponding ones for ${\mathcal D}_2$.  Let ${\mathscr T}_1 =
\{o\}$, we observe that $(J+3v)\cap (J+4v)\ne \emptyset$
(equivalently, $J\cap (J+v)\ne\emptyset$), hence there is only one
horizontal edge $2\sim 3$ in the first level of the augmented tree,
let ${\mathscr T}_2 = [\{2, 3\}]$. To find the next connected class
from ${\mathscr T}_2$, we note that the connectedness of
$\{S_{2j}(J), S_{3j}(J)\}_{j=1}^3$ is the same as (with a $B^2$
multiple)
$$
 J, \  (J+3v), \ (J+4v)\quad  \hbox {and} \quad
(J+Bv), \ (J+Bv+3v), \ (J+Bv+4v)
$$
(which are the cells of $B \big(J\cup (J+v)\big )$ corresponding to
${\mathcal D}_2$).  By (\ref {eq5.1}) (use the neighbors $J \pm v$
and $J \pm (Bv+3v)$),  we conclude that $\{2,3\}$ generates three
components
$$
\{21,32,33\},    \  \{22,23\},\  \{31\}.
$$
The first one belongs to a new class, for the iteration, we selected
from  $B\big(J\cup (J+Bv+3v)\cup(J+Bv+4v)\big)$ those cells
corresponding to ${\mathcal D}_2$ and use Hamilton-Cayley theorem to
obtain
$$
J, \  (J+3v), \ (J+4v);  \ (J-5v), \  (J-2v), \ (J-v);  \ (J+Bv-5v), \  (J+Bv-2v), \ (J+Bv-v).
$$
Again the neighborhood relation in (\ref{eq5.1}) determines the components
$$
 \{211\},\
\{212,213\},\ \{321\},\ \{322,323\},\ \{331\},\ \{332,333\}.
$$
There are no new ones if we continue the iteration, hence the three connected classes are: ${\mathscr T}_1=[o],\
{\mathscr T}_2=[\{2,3\}]$ and ${\mathscr T}_3=[\{21,32,33\}]$. The
incidence matrix is
$$
A=\left[\begin{array}{rrr}
          1 &  1 &  0  \\
          1 &  1 &  1\\
          3 &  3 &  0
    \end{array} \right].
$$
Clearly $A$ is primitive, and $K_2\simeq K_1$ by Theorem \ref{th3.12} under the ultra-metric defined by $w$. \hfill $\Box$

\end{section}

\bigskip

\begin{section}{\bf Remarks and open questions}

We note that not all incidence matrices of simple augmented trees are rearrangeable. For
example, we let
\begin{equation} \label{eq6.1}
K=\frac{1}{5}K\cup \frac{1}{5}(-K+4)\cup\frac{1}{5}(K+4).
\end{equation}
(It is a modification of $K_2$ in Example \ref {example1} by putting
in a reflection in the middle term.) Then the incidence matrix is
{\small $\left[\begin{array}{rrrr}
          1 &  1  \\
          0 &  3
    \end{array} \right].
$} It is  not rearrangeable, and it is easy to show that $K$ is a
countable union of dust-like sets, but we do not know if it is
Lipschitz equivalent to a dust-like set (see Figure \ref{fig6.1}).

\medskip

\begin{figure}[h]
  \centering
  \includegraphics[width=5cm]{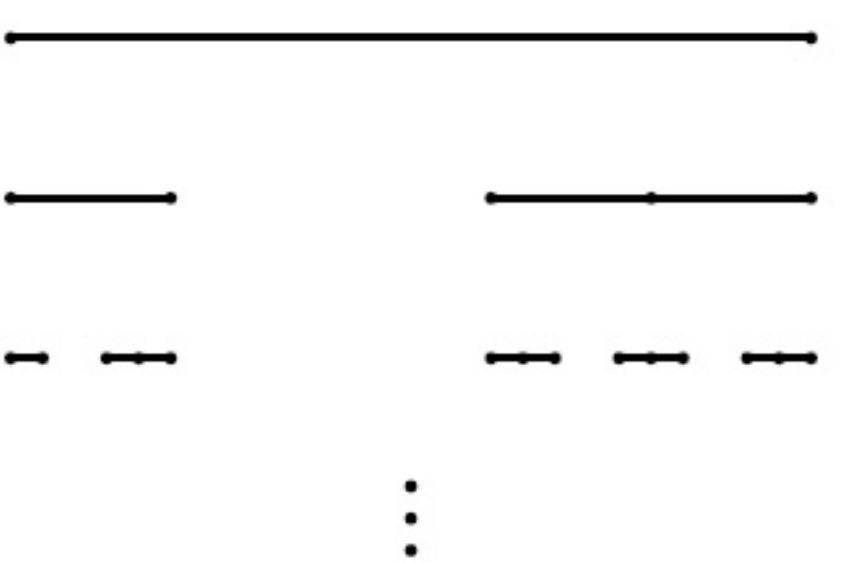}
\caption{ The totally disconnected $K$ defined by (\ref{eq6.1}).
}\label{fig6.1}
\end{figure}

\medskip

\noindent {\bf Q1}. Find conditions on the simple augmented tree
that describe the above situation. Also in view of Theorem
\ref{th3.6'}, is an irreducible incidence matrix $A$ rearrangeable?

\medskip

We have shown that if an augmented tree is simple, then the
self-similar set is totally disconnected (Proposition
\ref{additional prop}); also the converse is true on ${\Bbb R}$ under the OSC.  We ask

\medskip

\noindent {\bf Q2}. Suppose $K \subset {\Bbb R}^d$ is defined by (\ref {eq2.2}), if $K$
is totally disconnected, does it imply that augmented tree is simple, equivalently,  the horizontal
connected component of the augmented tree is uniformly bounded (with
or without assuming the OSC)?

\medskip

 In  recent literature, the investigations of the  Lipschitz equivalence are mainly for the {\it fractal cubes} of the form
 $K = \frac{1}{n}(K + {\mathcal D})$ where ${\mathcal D} \subset \{0, \dots , n-1\}^d$  (\cite{XiXi10}, \cite{XiXi11}).
 In Examples \ref{ex5.6} and \ref{ex5.5}, we bring in another classes of self-similar/self-affine sets $K$ that are based on the
 disk-like tiles on ${\Bbb R}^2$. Note that such tiles have six or eight neighbors \cite {BaWa01}, and for such $K$, the augmented trees depend very much on this neighborhood relationship. For the self-affine tiles generated by consecutive collinear digit sets, the disk-like property has been completely characterized in \cite{LeLa07} through the characteristic polynomials of the expanding matrices. This provides a wealth of source to study the topological property of such $K$. A general question is

\medskip

\noindent {\bf Q3.} For the  classes of self-similar/self-affine sets $K$
stated above, can we characterize the digit set ${\mathcal D}$ so
that $K$ is  totally disconnected?

\medskip

Also in  \cite {LaLuRa11}, the authors have made a head start to investigate the classification of the non-totally disconnected fractal squares.

\medskip

\noindent {\bf Q4.}  Can we put such classification into the
framework of augmented trees?

\medskip

In our consideration, we have restricted the IFS to have equal
contraction ratio. On the other hand, there is also interesting
study for different contraction ratios (e.g., \cite{RaRuWa10}, \cite{RuWaXi12}, \cite{XiRu07}). For
the augmented tree, the setup in \cite{LaWa09} is actually for
non-equal contraction ratios that the horizontal level $\Sigma^n$
can be replaced by the standard level set:  $\Lambda_n = \{{\bf i} =
i_1\cdots i_k:  r_{i_1} \cdots r_{i_k} \leq r^n < r_{i_1} \cdots
r_{i_{k-1}}\}$ where  $r_i$'s are the contraction ratios. More
generally, the authors have observed that the augmented trees can also be defined
for Moran fractals and the hyperbolic property still holds. It is
possible that the augmented trees can also be useful for the
Lipschitz equivalence of fractal sets in those cases.
\end{section}

\bigskip

\noindent{\bf Acknowledgements}: The authors would like to thank
Professors De-Jun Feng, Xing-Gang He, Hui Rao and  Xiang-Yang Wang
for many valuable comments and suggestions.

\bigskip

\end{document}